\newcommand{\Q}{\mathbb Q}
\newcommand{\R}{\mathbb R}
\newcommand{\Z}{\mathbb Z}
\DeclareMathOperator{\area}{area}
\DeclareMathOperator{\diam}{diam}
\DeclareMathOperator{\dist}{dist}
\DeclareMathOperator{\DIV}{DIV}
\DeclareMathOperator{\Hdim}{HDim}
\DeclareMathOperator{\hol}{hol}
\DeclareMathOperator{\hor}{hor}
\DeclareMathOperator{\NE}{NE}
\DeclareMathOperator{\SL}{SL}
\newcommand{\ds}{\displaystyle}
\newcommand{\eps}{\varepsilon}
\newcommand{\vhi}{\varphi}
\newcommand{\pmat}[1]{\begin{pmatrix}#1\end{pmatrix}}
\newcommand{\impl}{\quad\Longrightarrow\quad}
\newtheorem{theorem}{Theorem}[section]
\newtheorem{corollary}[theorem]{Corollary}
\newtheorem{lemma}[theorem]{Lemma}
\newtheorem{proposition}[theorem]{Proposition}
\theoremstyle{definition}
\newtheorem{definition}[theorem]{Definition}
\newtheorem{notation}[theorem]{Notation}
\theoremstyle{remark}
\newtheorem{remark}[theorem]{Remark}
\begin{document}
\title[Dichotomy of Hausdorff dimension]
{Dichotomy for the Hausdorff dimension 
of the set of nonergodic directions}
\date{\today}
\author{Yitwah Cheung, Pascal Hubert, Howard Masur}
\thanks{First and third authors supported by NSF DMS-0701281 and DMS-0905907, 
respectively and second author supported by ANR-06-BLAN-0038.}  

\begin{abstract}
Given an irrational $0<\lambda<1$, we consider billiards in the table $P_\lambda$ 
  formed by a $\tfrac12\times1$ rectangle with a horizontal barrier of length 
  $\frac{1-\lambda}{2}$ with one end touching at the midpoint of a vertical side.  
Let $\NE(P_\lambda)$ be the set of $\theta$ such that the flow on $P_\lambda$ in 
  direction $\theta$ is not ergodic.  
We show that the Hausdorff dimension of $\NE(P_\lambda)$ can only take on 
  the values $0$ and $\tfrac12$, depending on the summability of the series 
  $\sum_k \frac{\log \log q_{k+1}}{q_k}$ where $\{q_k\}$ is the sequence of 
  denominators of the continued fraction expansion of $\lambda$.  
More specifically, we prove that the Hausdorff dimension is $\tfrac12$ if this 
  series converges, and $0$ otherwise.  
This extends earlier results of Boshernitzan and Cheung.  
\end{abstract}
\maketitle

\section{Introduction}
In 1969, (\cite{Ve1}) Veech found examples of skew products over a rotation 
  of the circle that are minimal but not uniquely ergodic.  These were turned 
  into interval exchange transformations in \cite{KN}.  Masur and Smillie gave 
  a geometric interpretation of these examples (see for instance \cite{MT}) 
  which may be described as follows.  
Let $P_\lambda$ denote the billiard in a $\tfrac12\times1$ rectangle with a 
  horizontal barrier of length $\alpha=\tfrac{1-\lambda}{2}$ based at the 
  midpoint of a vertical side.  There is a standard unfolding procedure which 
  turns billiards in this polygon into flows along parallel lines on a translation 
  surface.  See Figure~\ref{fig:unfolding}.  

\begin{figure}[ht]\label{fig:unfolding}
\includegraphics{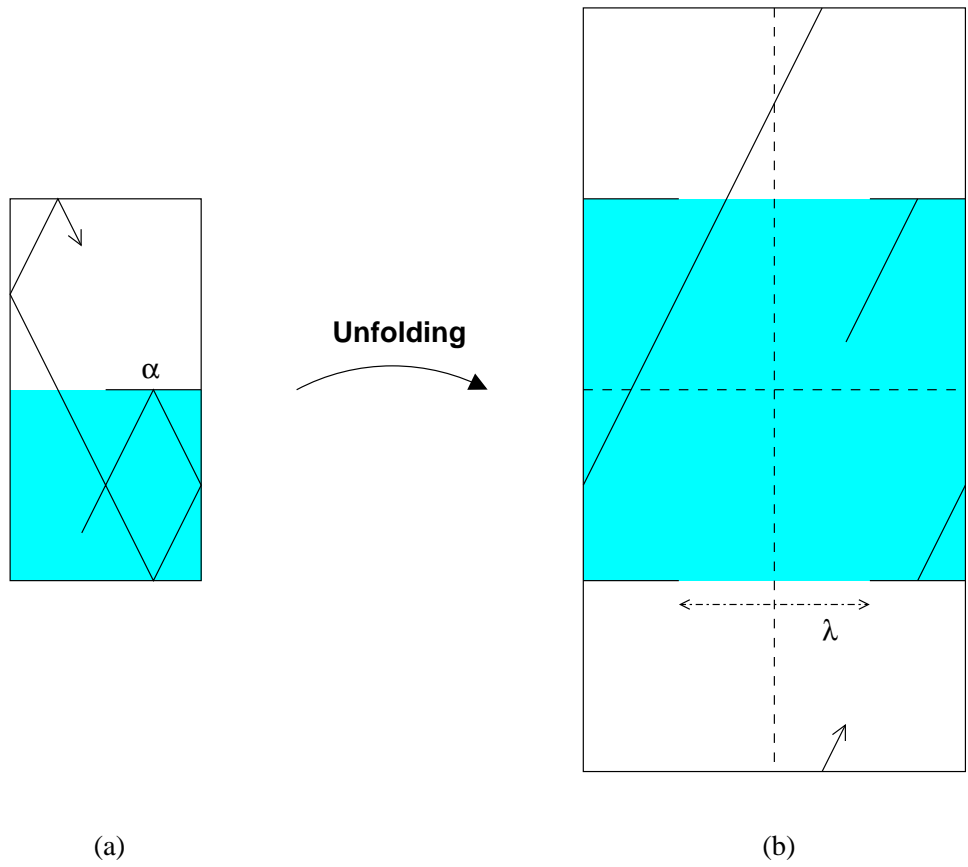}  
\caption{Unfolding the table $P_\lambda$.}
\end{figure}

The associated translation surface in this case is a double cover of a standard 
flat torus of area one branched over two points $z_0$ and $z_1$ a horizontal 
distance $\lambda$ apart on the flat torus.   See Figure~\ref{fig:slit-torus}. 
We denote it by $(X,\omega)$.  

\begin{figure}[ht]\label{fig:slit-torus}
\includegraphics{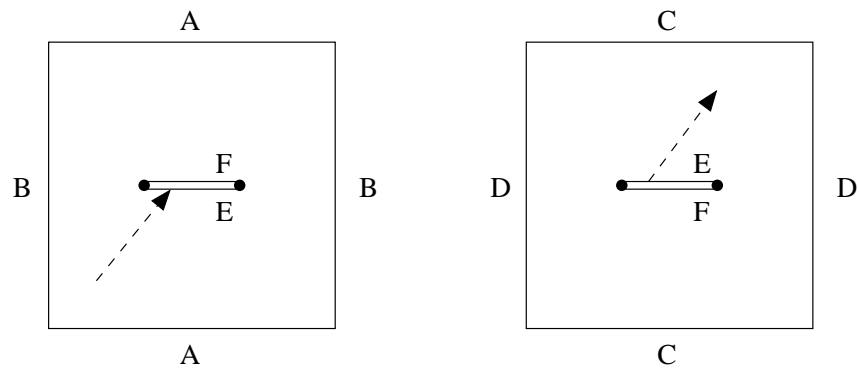}  
\caption{The branched double cover $(X,\omega)$.}
\end{figure}

The linear flows on this translation surface preserve Lebesgue measure.  
What Veech showed in these examples is that given $\theta$ with unbounded 
partial quotients in its continued fraction expansion, there is a $\lambda$ 
such that the flow on $P_\lambda$ in direction with slope $\theta$ is minimal 
but not uniquely ergodic.  

Let $\NE(P_\lambda)$ denote the set of nonergodic directions, i.e. those directions 
  for which Lebesgue measure is not ergodic.  It was shown in \cite{MT} that 
  $\NE(P_\lambda)$ is uncountable if $\lambda$ is irrational.  
When $\lambda$ is rational, a result of Veech (\cite{Ve2}) implies that minimal 
  directions are uniquely ergodic; thus $\NE(P_\lambda)$ is the set of rational 
  directions and is countable.  
By a general result of Masur (see \cite{Ma2}), the Hausdorff dimension of 
  $\NE(P_\lambda)$ satisfies $\Hdim \NE(P_\lambda) \le \tfrac12$.  

In \cite{Ch1} Cheung proved that this estimate is sharp.  He showed that if 
  $\lambda$ is \emph{Diophantine}, then $\Hdim \NE(P_\lambda) \ge \tfrac12$.  
Recall that $\lambda$ is Diophantine if there is lower bound of the form 
  $$\left|\lambda-\frac{p}{q}\right|>\frac{c}{q^s}, \quad c>0,s>0$$ 
  controlling how well $\lambda$ can be approximated by rationals.  
This raises the question of the situation when $\lambda$ is irrational but 
  not Diophantine; namely, when $\lambda$ is a Liouville number.  
Boshernitzan showed that $\Hdim\NE(P_\lambda)=0$ for a residual (in particular, 
  uncountable) set of $\lambda$ (see the Appendix in \cite{Ch1}) although 
  it is not obvious how to exhibit a specific Liouville number in this set.  

In this paper, we establish the following dichotomy:
\begin{theorem}\label{thm:dichotomy}
Let $\{q_k\}$ be the sequence of denominators in the continued fraction 
  expansion of $\lambda$.  Then $\Hdim\NE(P_\lambda)=0$ or $\tfrac12$, 
  the latter case occurring if and only if $\lambda$ is irrational and 
\begin{equation}\label{PM:conv}
  \sum_k \frac{\log\log q_{k+1}}{q_k} < \infty.\footnote{This condition on the 
  denominators of the continued fraction appears in complex dynamics in the 
  work of P\'erez Marco (\cite{PM}) in the context of the linearization problem 
  and is commonly referred to as the P\'erez Marco condition.  An expository 
  account of the history leading up to this work is given in \cite{Mi}.}  
\end{equation}
\end{theorem}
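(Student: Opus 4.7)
The plan is to prove the two halves of the dichotomy separately, relying on the Masur upper bound $\Hdim\NE(P_\lambda)\le\tfrac12$ and on the geometric description of nonergodic directions on the branched double cover $(X,\omega)$ via Masur--Smillie type splitting sequences. Recall that on such a cover the short saddle connections come in two families: those lifted from closed geodesics of the flat torus (with holonomies controlled by the convergents $p_k/q_k$) and those joining the branch points $z_0,z_1$ (whose holonomies involve the displacements $\{q_k\lambda\}$). A direction $\theta$ belongs to $\NE(P_\lambda)$ whenever it is the limit of saddle connections whose transverse holonomies form a summable splitting sequence producing genuinely different invariant partitions at each scale. This single mechanism will drive both halves of the argument.

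\smallskip

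\emph{Convergence $\Rightarrow\Hdim=\tfrac12$.} I would construct a Cantor subset $K\subset\NE(P_\lambda)$ of dimension exactly $\tfrac12$, indexing its stages by the denominators $q_k$. At stage $k$ the surviving intervals have length of order $1/(q_kq_{k+1})$, and each admits $N_k$ children corresponding to distinct homotopy classes of level-$k$ saddle connections compatible with the running Masur--Smillie summability budget inherited from stages $1,\dots,k-1$. The key accounting is that adding one more branch at stage $k$ costs roughly $\log\log q_{k+1}/q_k$ in that budget, so convergence of the Perez Marco series is the precise hypothesis that permits the full branching $N_k$ of order $q_{k+1}/q_k$ to be retained across all stages. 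A mass-distribution computation then gives $\Hdim K\ge\liminf_k\log(N_1\cdots N_k)/\log(q_kq_{k+1})=\tfrac12$, and combined with Masur's upper bound this is an equality.

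\smallskip

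\emph{Divergence $\Rightarrow\Hdim=0$.} Here I would cover $\NE(P_\lambda)$ by arcs of directions close to level-$k$ saddle connections at each $k$ and bound the $s$-dimensional Hausdorff content for arbitrary $s>0$. Every nonergodic $\theta$ is within $O(1/(q_kq_{k+1}))$ of some admissible level-$k$ saddle connection direction, and the Masur--Smillie summability constraint restricts the number $M_k$ of admissible saddle connections at level $k$ to roughly $(q_{k+1}/q_k)/\log q_{k+1}$. Divergence of $\sum_k\log\log q_{k+1}/q_k$ then forces, via a Borel--Cantelli pigeonhole argument on which partial sums of the budget are used up at which stages, infinitely many indices $k$ where the corresponding covering $\sum M_k(q_kq_{k+1})^{-s}$ already falls below any prescribed threshold, giving vanishing $s$-content for every $s>0$.

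\smallskip

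The hard part is the lower bound. The Cantor set must be arranged so that \emph{every} chosen branch survives all subsequent Masur--Smillie cone conditions while still retaining the full $q_{k+1}/q_k$ factor of branching needed to reach dimension $\tfrac12$. Because each additional branch contributes a $\log\log q_{k+1}/q_k$ cost to a single summability budget shared across all stages, the Perez Marco series is the exact series that must be finite: strengthening the construction would raise the threshold to the Brjuno series $\sum\log q_{k+1}/q_k$, weakening it would remove the condition entirely, and both answers contradict the known behavior. Making this trade-off tight---so that the very same series that appears in the hypothesis appears in the dimension formula---is where the bulk of the technical work will reside.
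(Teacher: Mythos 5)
Your high-level intuition is right — a multiscale Cantor construction for the lower bound, a covering argument for the upper bound, and the P\'erez Marco series as the threshold — but both halves of the proposal have gaps in the quantitative mechanism that would not close.

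For the lower bound, indexing the Cantor set by the continued fraction stage $k$ and giving each parent $N_k\sim q_{k+1}/q_k$ children cannot work: when $q_{k+1}$ is enormously larger than $q_k$ (which is the whole point in the Liouville regime), this branching rate is incompatible with keeping the cross-products summable, and your own accounting — adding one branch at stage $k$ costs $\log\log q_{k+1}/q_k$ — would charge $(q_{k+1}/q_k)\cdot(\log\log q_{k+1}/q_k)$ per stage, far exceeding the P\'erez Marco budget. The construction in the paper instead builds a tree of slits whose lengths grow like $|w_0|^{r^j}$ for $r$ slightly above $1$, so a single large interval $[q_k,q_{k+1}]$ contains on the order of $\log_r\log q_{k+1}$ \emph{levels}, each contributing $O(1/q_k)$ to the budget, for a total of $O(\log\log q_{k+1}/q_k)$. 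It alternates between a ``Liouville construction'' (using the Liouville convergents $p_k+mq_k/nq_k$ of the inverse slopes, and the surprising fact that the gcd bookkeeping $d(w',k)\le 2$ propagates) inside large gaps, and the ``Diophantine construction'' of normal slits from [Ch1] between them, all fed into Falconer's estimate with carefully tuned parameters $\delta_j,\rho_j$; none of this structure is visible in your sketch, and without it the number of children per slit cannot be controlled.

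For the upper bound, your covering count $M_k\sim (q_{k+1}/q_k)/\log q_{k+1}$ is not justified and, plugged into $\sum_k M_k(q_kq_{k+1})^{-s}$, actually makes the sum \emph{grow} with the gap size rather than shrink, so the ``Borel--Cantelli pigeonhole'' as stated would not yield vanishing content. The paper's route is structurally different and cleaner: it first proves a general fact — relative to a closed discrete set $Z$ of at most quadratic growth, the set of Liouville directions (those whose $Z$-convergent heights grow superpolynomially) has Hausdorff dimension $0$ — and then reduces the theorem to showing that under the divergence hypothesis every minimal nonergodic direction must be Liouville relative to $Z=V_0\cup V_2$. That reduction hinges on a lower bound $|w_j\times v_j|>1/(2q_k)$ for slits landing in a large gap (the Liouville-convergent dichotomy of Lemma~\ref{lem:min:area}): if $\theta$ were Diophantine relative to $Z$, there would be $\gtrsim\log\log q_{k+1}$ such $j$ per large gap, and the cross-product sum would dominate $\sum\log\log q_{k+1}/q_k=\infty$, contradicting the Cheung--Eskin summability characterization of nonergodic directions. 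You will need both the two-step reduction (nonergodic $\Rightarrow$ Liouville, Liouville $\Rightarrow$ dimension $0$) and the sharp lower bound on cross-products to make this half work; without them there is no way to convert divergence of the series into a Hausdorff content estimate.
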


We briefly outline the proof of Theorem~\ref{thm:dichotomy}, which naturally 
  divides into two parts: an upper bound argument giving the dimension $0$ result and a  lower bound argument giving the dimension $\tfrac12$ 
  result.   
In \S\ref{s:NEdir} we discuss the geometry of the surface $(X,\omega)$ associated to $P_\lambda$; 
  in particular, the ways it can be decomposed into tori glued together along {\em slits}.  
We call this a partition of the surface. The main object of study in both parts of the theorem concerns the  summability of the areas of the changes of the partitions, expressed in terms  (\ref{ieq:sumx}) of the summability of the cross-product of the vectors of the slits.  

\subsection{Sketch of dimension $0$ case}
The starting point for the proof of Hausdorff dimension $0$ in the case that
\begin{equation}\label{PM:div}
  \sum_k \frac{\log\log q_{k+1}}{q_k} = \infty
\end{equation}  
is   Theorem~\ref{thm:CE} from \cite{CE}. 
That theorem asserts that 
  to each nonergodic direction $\theta\in\NE(P_\lambda)$ there is an associated sequence 
  of slits $\{w_j\}$ and loops $\{v_j\}$ whose directions converge to $\theta$ and satisfy the summability condition (\ref{ieq:sumx}).  
The natural language to describe the manner by which a sequence of vectors is associated 
  to a nonergodic direction is within the framework of $Z$-expansions.\footnote{This is 
  more of a convenience than an essential tool.}  (See \S\ref{s:LVdir}.)  
Here, $Z$ denotes a closed discrete subset of $\R^2$ satisfying some mild restrictions and 
  in the case when $Z$ is the set of primitive vectors in $\Z^2$ this notion reduces to 
  continued fraction expansions.  
We also have the notion of  {\em Liouville direction} (relative to $Z$) which intuitively refers to a direction 
  that is extremely well approximated by the directions of vectors in $Z$.  Under fairly general 
  assumptions, which hold for example if $Z$ is a set of holonomies of saddle connections on 
  a translation surface, the set of Liouville directions has Hausdorff dimension zero.  (Corollary~\ref{cor:LVdir}) 
The proof of Hausdorff dimension $0$  then reduces to showing that if $\lambda$ satisfies (\ref{PM:div}), then 
  every minimal nonergodic direction is Liouville with respect to the $Z$ expansion.  
This is stated as Lemma~\ref{lem:NE->LV}.

For the proof of  Lemma~\ref{lem:NE->LV}
the key ingedient  is Lemma~\ref{lem:min:area}, which gives a {\em lower bound} on cross-products.  
It is based on the fact that $\frac{p_k+mq_k}{nq_k}$ will be 
  an extremely good approximation to $\frac{\lambda+m}{n}$ provided the interval $[q_k,q_{k+1}]$ 
  is large enough and also contains $n$ not too close to $q_{k+1}$.  (See Lemma~\ref{lem:LC}.) 
  This idea is motivated by 
  the elementary fact that for any pair of vectors $w=(\tfrac{p}{q}+m,n)$ and $v=(m',n')$ where 
  $m,n,m',n',p,q\in\Z$ with $q>0$ we have $$|w\times v|=\frac{|(p+m)n'-m'nq|}{q}\ge\frac{1}{q}$$ 
  unless $v,w$ are parallel to each other, in which case the cross-product vanishes.  

We apply  Lemma~\ref{lem:min:area} to the   
sequence $\{w_j\}$ associated by Theorem~\ref{thm:CE} to a minimal nonergodic direction 
  $\theta$.  If one  assumes, by contradiction, that $\theta$ is {\em not} Liouville with respect to the $Z$-expansion,  then Lemma~\ref{lem:min:area} implies that 
  $$|w_j\times v_j|\ge\frac{1}{2q_k}$$ whenever $|w_j|$ falls in a large interval $[q_k,q_{k+1}]$.  Moreover, 
  the number of such slits is at least  a fixed constant times $\log\log q_{k+1}$.  Thus the sum of the cross-products would be at least $$\sum\frac{\log\log q_{k+1}}{q_k},$$ the sum over those $k$ for which $[q_k,q_{k+1}]$ is large.  
Since (\ref{PM:div}) still holds if the sum is restricted to those $k$,
the summable cross-products condition (\ref{ieq:sumx}) would be contradicted.  This will then show that $\theta$ is Liouville and we will conclude that $\Hdim\NE(P_\lambda)=0$.  

\subsection{Sketch of dimension $1/2$ case}
The starting point for the dimension $\tfrac12$ argument is Theorem~\ref{thm:sumx}, which is 
  the specialization of a result from \cite{MS} to the case of $(X,\omega)$ that says the summability 
  condition (\ref{ieq:sumx}) is sufficient to guarantee that the limiting direction of a sequence of slit 
  directions is a nonergodic direction.  

One proceeds to construct  
  a Cantor set of nonergodic directions arising as a limit of directions of slits on the torus.  
Aspects of this construction were already carried out in \cite{Ch1} in the case that  
 $\lambda$ is Diophantine. 

For $r>1$,  let $F(r)$ be the set of limiting directions obtained from sequences $\{w_j\}$ satisfying 
  $|w_{j+1}|\approx|w_j|^r$. 
It was shown in \cite{Ch1}, under the assumption of Diophantine $\lambda$, that one can make  
  the series in (\ref{ieq:sumx}) 
  be  dominated by a geometric series of ratio $1/r$, and then $\Hdim F(r)\ge\frac{1}{1+r}$.  
The lower bound $\tfrac12$ then follows by taking the limit as $r$ tends to one.  

The strategy of bounding cross-products using a geometric series fails if only the weaker Diophantine 
  condition (\ref{PM:conv}) is assumed.  In fact, in the large gaps $[q_k.q_{k+1}]$, as we have indicated, 
  the cross-product is bounded below by $\frac{1}{2q_k}$.  So if the gaps are large, (where the notion 
  of ``large'' is to be made precise later)  then there are many terms with cross-products bounded below 
  by $\frac{1}{2q_k}$ and these terms would eventually become larger than the terms in the geometric series. 

This suggests modifying the strategy in \cite{Ch1} by replacing the geometric series used to dominate the 
  series in (\ref{ieq:sumx}) with a series whose terms $\delta_j$ are $O(1/q_k)$ if $|w_j|$ lies in a large interval 
  $[q_k,q_{k+1}]$ and are otherwise decreasing like a geometric series of ratio $1/r$ for $j$ such that $|w_j|$ 
  lies between successive large intervals.  
The number of slits in $[q_k,q_{k+1}]$ is $O(\log_r\log q_{k+1})$ so that $\sum\delta_j$ 
  restricted to those $j$ for which $|w_j|$ lies in a large interval $[q_k,q_{k+1}]$ is bounded using 
  the assumption (\ref{PM:conv}). 
The sum of the remaining terms is bounded by the sum of a geometric series times $\sum_k\frac{1}{q_k}$.  
  This latter sum is finite.  The finiteness then of $\sum\delta_j$ and therefore (\ref{ieq:sumx}) ensures that 
  the resulting set $F(r)\subset\NE(P_\lambda)$.  

Following \cite{Ch1}, we seek to build a tree of slits so that by associating intervals about the direction of 
  each slit in the tree, we can give $F(r)$ the structure of a Cantor set to which standard techniques can be 
  used to give lower estimates on Hausdorff dimension.   These techniques require certain ``local estimates'' 
  (expressed in terms of lower bounds on the number of subintervals and the size of gaps between them) 
  hold at each stage of the construction.  
In \S\ref{s:Cantor}, we express these local estimates in terms of the parameters $r$ and $\delta_j$.  

For slits $w$ whose lengths lie in a "small" interval $[q_k,q_{k+1}]$ we repeat the construction given in 
  \cite{Ch1} to construct "children" slits from "parent" slits.  This is carried out in \S\ref{s:Diophantine}.  
In the current situation we have to combine that construction with a new one to deal with slits lengths 
  that lie between consecutive $q_k,q_{k+1}$ with large ratio.  We call this the "Liouville" part of $\lambda$.  
The construction of new slits from old ones in that case is carried out in \S\ref{s:Liouville}.  
 
The construction of the tree of slits and the precise definition of the terms $\delta_j$ are given in \S\ref{s:Init} 
  and \S\ref{s:Tree}.  These sections are the most technical part of the paper.  The main task is to ensure that 
  the recursive procedure for constructing the tree of slits can be continued indefinitely while at the same time 
  ensuring the required local estimates are satisfied in the case of our two constructions.  

Finally, in \S\ref{s:Lower}, we verify that the series $\sum\delta_j$ is convergent and that the lower bound 
  on $\Hdim F(r)$ can be made arbitrarily close to $\tfrac12$ by choosing the parameter $r$ sufficiently 
  close to one.

\subsection{Divergent geodesics}
Finally we record the following by-product of our investigation. 
Associated to any translation surface (or more generally a holomorphic 
quadratic differential) is a Teichm\"uller geodesic.  For each $t$ the Riemann 
surface $X_t$ along the geodesic is found by expanding along horizontal 
lines by a factor of $e^t$ and contracting along vertical lines by $e^t$.  
It is known (see \cite{Ma2}) that if the vertical foliation of the 
quadratic differential is nonergodic, then the associated Teichm\"uller 
geodesic is \emph{divergent}, i.e. it eventually leaves every compact 
subset of the stratum.\footnote{In \cite{Ma2}, a stronger assertion 
was proved, namely the \emph{projection} of the Teichm\"uller geodesic 
to the moduli space of Riemann surfaces is also divergent.}
The converse is however false.  
There are divergent geodesics for which the vertical foliation is uniquely ergodic.  
In fact,  we have 
\begin{theorem}\label{thm:divergent}
Let $\DIV(P_\lambda)$ denote the set of divergent directions in $P_\lambda$, 
  i.e. directions for which the associated Teichm\"uller geodesic leaves 
  every compact subset of the stratum.\footnote{Theorem~\ref{thm:divergent} 
  remains valid if $\DIV(P_\lambda)$ is interpreted as the set of directions that 
  are divergent in the sense described in the previous footnote.}  
Then $\Hdim \DIV(P_\lambda)=0$ or $\tfrac12$, with the latter case occurring 
  if and only if $\lambda$ is irrational.
\end{theorem}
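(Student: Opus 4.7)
The proof divides into a universal upper bound, the rational case, and the irrational case. The upper bound $\Hdim\DIV(P_\lambda)\le\tfrac12$ is a general fact about divergent Teichm\"uller geodesics on any stratum of abelian differentials, proved by the same type of argument that yields $\Hdim\NE\le\tfrac12$ in \cite{Ma2}. For rational $\lambda$, the two branch points differ by a rational translation, so (by Gutkin--Judge) $(X,\omega)$ is a Veech surface; the Veech dichotomy together with the fact that uniquely ergodic directions on a Veech surface have recurrent Teichm\"uller geodesics places $\DIV(P_\lambda)$ inside the countable set of parabolic (periodic) directions, giving dimension zero.

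For irrational $\lambda$, the inclusion $\NE(P_\lambda)\subseteq\DIV(P_\lambda)$, coming from Masur's criterion in \cite{Ma2} that nonergodicity of the vertical foliation forces the Teichm\"uller geodesic to leave every compact subset of the stratum, handles the subcase where (\ref{PM:conv}) holds: Theorem~\ref{thm:dichotomy} yields $\Hdim\DIV\ge\Hdim\NE=\tfrac12$. The remaining subcase is $\lambda$ irrational but violating (\ref{PM:conv}), where $\Hdim\NE=0$ and a direct construction is needed. The key observation is that $\theta\in\DIV(P_\lambda)$ as soon as one exhibits saddle connections $w_j$ with
\[ |\langle w_j,\theta\rangle|\cdot|\langle w_j,\theta^\perp\rangle|\to 0, \]
which is strictly weaker than the summable-cross-products condition (\ref{ieq:sumx}) needed for nonergodicity. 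The plan is to rerun the Cantor-tree construction of \S\ref{s:Init}--\S\ref{s:Tree} with the summable control sequence $\delta_j$ replaced by a sequence that merely tends to zero; the resulting $F(r)$ then consists of divergent (but not necessarily nonergodic) directions, and the same Cantor-set estimate as in \S\ref{s:Lower} yields $\Hdim F(r)\ge 1/(1+r)$, so that $r\to 1^+$ gives the lower bound $\tfrac12$.

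The main obstacle is verifying that, without (\ref{PM:conv}), the recursion can nonetheless be carried out indefinitely, i.e.\ that the local estimates of \S\ref{s:Diophantine} and \S\ref{s:Liouville} remain available under the weakened control on cross-products. The crucial advantage in the Liouville subcase is that the lower bound $|w_j\times v_j|\ge 1/(2q_k)$ furnished by Lemma~\ref{lem:min:area}, which is precisely what forces $\sum|w_j\times v_j|$ to diverge when (\ref{PM:conv}) fails and therefore defeats the nonergodic construction, is nevertheless perfectly compatible with $|w_j\times v_j|\to 0$ so long as $q_k\to\infty$. Thus the arithmetic obstruction that blocks the construction of a Cantor set of nonergodic directions simply disappears once one works with the divergence criterion, and a careful reprise of the Liouville-part construction in \S\ref{s:Liouville} with the relaxed conditions on $\delta_j$ propagated through each stage completes the lower bound.
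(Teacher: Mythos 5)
Your outline has the right shape—upper bound from \cite{Ma2}, a countability argument for rational $\lambda$, the Cantor set construction for the lower bound in the irrational case—but the irrational case is where you go off course, and in a way worth understanding.

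You propose to ``rerun the Cantor-tree construction of \S\ref{s:Init}--\S\ref{s:Tree} with the summable control sequence $\delta_j$ replaced by a sequence that merely tends to zero,'' and you flag as ``the main obstacle'' the need to re-verify the local estimates without (\ref{PM:conv}). This misreads where the P\'erez Marco condition actually enters. The sequence $\delta_j$ defined in \S\ref{s:Tree} is \emph{not} chosen to be summable; it is dictated level by level by the position of slit lengths relative to the continued fraction of $\lambda$ (see the table at the start of \S\ref{s:Lower}). All of the recursive verifications---enough children at each level, the gap estimate of Lemma~\ref{lem:gaps}, $m_j\ge2$, and the bound $\liminf d_j>\tfrac12-\eps$---are carried out in \S\ref{s:Tree} and \S\ref{s:Lower} for an \emph{arbitrary} irrational $\lambda$, with no appeal to (\ref{PM:conv}). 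The P\'erez Marco condition is used in exactly one place, Lemma~\ref{lem:cross}, and only to conclude $\sum_j\delta_j<\infty$, which is what puts $F$ inside $\NE(P_\lambda)$ via Theorem~\ref{thm:sumx}. When (\ref{PM:conv}) fails, the set $F$ and its lower Hausdorff dimension estimate are already in hand, unchanged; one merely loses $F\subset\NE(P_\lambda)$. Since $\delta_j\to0$ automatically (as $q_k\to\infty$), the inclusion $F\subset\DIV(P_\lambda)$ follows from [Ch2, Prop.~3.6] with no further work, and the lower bound $\Hdim\DIV(P_\lambda)\ge\tfrac12$ holds for every irrational $\lambda$. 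Thus the ``reprise of the Liouville-part construction'' you anticipate is unnecessary, and Lemma~\ref{lem:min:area} never blocked the \emph{construction} in the first place; it only blocks the conclusion that the constructed directions are nonergodic. Relatedly, you should be careful with your stated criterion: saddle connections $w_j$ with $|\langle w_j,\theta\rangle|\cdot|\langle w_j,\theta^\perp\rangle|\to0$ guarantee only that the geodesic $\liminf$ escapes compacta (the systole dips arbitrarily low along a subsequence of times), not genuine divergence; the stronger conclusion needs the uniform control coming from $\delta_j\to0$ in the specific form invoked from \cite{Ch2}.

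As for $\lambda\in\Q$, your route via Gutkin--Judge and the Veech dichotomy is valid but more than what the paper uses; the paper simply records that $\DIV(P_\lambda)$ is countable (the Veech group being a lattice, a geodesic diverges iff the direction is parabolic, and those directions are countable).
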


The authors would like to thank Emanuel Nipper and the referee 
for many helpful comments.  

\section{Loops, slits, and summable cross-products}\label{s:NEdir}
In this section, we establish notation, study partitions of the surface 
  associated to $P_\lambda$, and recall the summable cross-products 
  condition (\ref{ieq:sumx}) for detecting nonergodic directions.  

Let $(T;z_0,z_1)$ denote the standard flat torus with two marked points.  
A \emph{saddle connection} on $T$ is a straight line that starts and ends 
  in $\{z_0,z_1\}$ without meeting either point in its interior.  
By a \emph{slit} we mean a saddle connection that joins $z_0$ and $z_1$, 
  while a \emph{loop} is a saddle connection that joins either one of these 
  points to itself.  

Holonomies of saddle connections will always be represented as a pair 
  of real numbers.  In particular, $$\hol(\gamma_0)=(\lambda,0)$$ 
  where $\gamma_0$ is the horizontal slit joining $z_0$ to $z_1$.  
The set of holonomies of loops is given by 
  $$V_0=\{(p,q)\in\Z^2: \gcd(p,q)=1\}.$$  
Since $\lambda$ is irrational, the set of holonomies of slits is given by 
  $$V_1=V_1^+\cup\left(-V_1^+\right)$$ where
  $$V_1^+=\{(\lambda+m,n): m,n\in\Z^2, n>0\}\cup\{(\lambda,0)\}.$$ 

Note that $V_0$ and $V_1$ are disjoint and that $V_1$ is in one-to-one 
  correspondence with the set of oriented slits.  
When we speak of ``the slit $w$ ...'' we shall always mean the slit whose 
  holonomy is $w$, while $w\in V_1^+$ specifies that the orientation is 
  meant to be from $z_0$ to $z_1$.  
Also, each $v\in V_0$ corresponds to a pair of loops, one based at each 
  branch point.  The pair of cylinders in $T$ bounded by these loops will 
  be denoted by $C^1_v,C^2_v$.  The core curves of these cylinders also 
  have $v$ as their holonomy.  

\begin{definition}
Each slit $\gamma$ has two lifts in $(X,\omega)$ whose union is a 
  simple closed curve.  We say $\gamma$ is \emph{separating} if this 
  curve separates $X$ into a pair of tori interchanged by the involution 
  of the double cover.\footnote{This involution, which fixes each branch 
  point, should not be confused with the hyperelliptic involution that 
  interchanges the branch points and maps each slit torus to itself.}  
We denote the slit tori by $T^1_w,T^2_w$ where $w=\hol(\gamma)$.  
\end{definition}

\begin{lemma}(\cite{Ch1})
A slit $w$ is separating if and only if $w=(\lambda+m,n)$ 
  for some even integers $m,n$.  
\end{lemma}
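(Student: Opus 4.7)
The plan is to translate the separation question into a homological computation: a simple closed curve on a closed orientable surface separates iff it is null-homologous mod $2$, and I will compute the class of $\tilde\gamma$ by identifying it (via the double-cover structure) with the mod $2$ difference $[\gamma] - [\gamma_0]$ in relative homology.

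First I would observe that $\tilde\gamma$ separates $X$ iff $X \setminus \tilde\gamma$ is disconnected. Since every branch point of $\pi \colon X \to T$ lies on $\tilde\gamma$, the restriction $\pi \colon X \setminus \tilde\gamma \to T \setminus \gamma$ is an unbranched double cover with connected base, so separation is equivalent to triviality of the monodromy $\pi_1(T \setminus \gamma) \to \Z/2$ of this restricted cover.

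Second I would compute the monodromy $\phi \in H^1(T \setminus \{z_0, z_1\}; \Z/2)$ of the original branched cover. Since $X$ is built by cutting two copies of $T$ along $\gamma_0$ and regluing crosswise, a loop switches sheets exactly when it crosses $\gamma_0$, so $\phi(c) \equiv c \cdot \gamma_0 \pmod 2$. Any loop in $T \setminus \gamma$ has zero intersection with $\gamma$, and conversely any class with $c \cdot \gamma \equiv 0 \pmod 2$ can be homotoped off $\gamma$ in $T \setminus \{z_0, z_1\}$, so the image of $H_1(T \setminus \gamma; \Z/2) \to H_1(T \setminus \{z_0, z_1\}; \Z/2)$ is exactly the kernel of $c \mapsto c \cdot \gamma$. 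Triviality of the restricted monodromy then becomes the implication $c \cdot \gamma \equiv 0 \Rightarrow c \cdot \gamma_0 \equiv 0$, which by nondegeneracy of the Poincar\'e--Lefschetz pairing
\[
H_1(T, \{z_0, z_1\}; \Z/2) \times H_1(T \setminus \{z_0, z_1\}; \Z/2) \to \Z/2
\]
is equivalent to $[\gamma] = [\gamma_0]$ in $H_1(T, \{z_0, z_1\}; \Z/2)$.

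Finally, the difference of holonomies is $w - \hol(\gamma_0) = (m, n) \in \Z^2$, so $\gamma \cdot \gamma_0^{-1}$ is a loop based at $z_0$ whose lift to the universal cover of $T$ ends at $(m, n)$. Hence $[\gamma] - [\gamma_0] = m[\alpha] + n[\beta]$ in $H_1(T, \{z_0, z_1\}; \Z)$, where $\alpha, \beta$ are the horizontal and vertical generators of $H_1(T; \Z)$. The inclusion $H_1(T; \Z/2) \hookrightarrow H_1(T, \{z_0, z_1\}; \Z/2)$ is injective by the long exact sequence of the pair, so this class vanishes mod $2$ iff both $m$ and $n$ are even, giving the claim. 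The one subtlety to handle with care is the identification in the second step of the image of $H_1(T \setminus \gamma; \Z/2)$ with the kernel of intersection with $\gamma$; the rest is routine once this is in place.
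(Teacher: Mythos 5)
The paper does not prove this lemma; it cites it from \cite{Ch1} without argument, so there is no in-paper proof to compare against. Your homological argument is correct and complete. Briefly: separation of $\tilde\gamma$ is equivalent to triviality of the restricted monodromy $\pi_1(T\setminus\gamma)\to\Z/2$, since $\pi\colon X\setminus\tilde\gamma\to T\setminus\gamma$ is an unbranched double cover over a connected base; the monodromy of the branched cover is $c\mapsto c\cdot\gamma_0 \pmod 2$ because $X$ is assembled by slitting two copies of $T$ along $\gamma_0$ and regluing crosswise; and the image of $H_1(T\setminus\gamma;\Z/2)\to H_1(T\setminus\{z_0,z_1\};\Z/2)$ is exactly the annihilator of $[\gamma]$ (both have dimension $2$ in the $3$-dimensional space, and the image is contained in the kernel since cycles in $T\setminus\gamma$ are disjoint from $\gamma$), so triviality of the restricted monodromy is, by nondegeneracy of the Lefschetz pairing, equivalent to $[\gamma]=[\gamma_0]$ in $H_1(T,\{z_0,z_1\};\Z/2)$. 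The chain $\gamma-\gamma_0$ is a cycle of class $(m,n)\in H_1(T;\Z)$, and injectivity of $H_1(T;\Z/2)\to H_1(T,\{z_0,z_1\};\Z/2)$ from the long exact sequence of the pair closes the argument. The subtlety you flagged (the image-equals-annihilator step) is handled correctly by the dimension count, so the proof stands as written.
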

The collection of separating slits have holonomies given by 
  $$V_2=V_2^+\cup\left(-V_2^+\right)$$ where
  $$V_2^+=\{(\lambda+2m,2n): m,n\in\Z^2, n>0\}\cup\{(\lambda,0)\}.$$ 

The cross-product formula from vector calculus expresses the area of the 
  parallelogram spanned by $u$ and $v$ as 
  $$|u\times v|=\|u\| \|v\|\sin\theta$$ 
  where $\times$ denote the standard skew-symmetric bilinear form on $\R^2$, 
  $\|\cdot\|$ the Euclidean norm, and $\theta$ the angle between $u$ and $v$.  
It will be convenient to introduce the following.  

\begin{notation}
The distance between the directions of $u,v\in\R\times\R_{>0}$, 
  denoted by $\angle uv$, will be measured with respect to inverse 
  slope coordinates.  That is, $\angle uv$ is the absolute value of 
  the difference between the reciprocals of their slopes.  
We have the folllowing analog of the cross-product formula 
  $$|u\times v| = |u|~|v|\angle uv$$ 
  where $|\cdot|$ denotes the absolute value of the $y$-coordinate.  
\end{notation}  

\begin{remark}
For our purposes, the vectors we consider will always have directions 
  close to some fixed direction and nothing essential is lost if one 
  chooses to think of $|v|$ as the \emph{length} of the vector $v$ 
  (or to think of $\angle uv$ as the angle between the vectors) for 
  these notions differ by a ratio that is nearly constant.  
In fact, the notations $|v|$ and $\angle uv$ are intended to remind 
  the reader of Euclidean lengths and angles, and in the discussions 
  we shall sometimes refer to them as such.  
These nonstandard notions are particularly convenient in calculations 
  as they allows us to avoid trivial approximations involving square 
  roots and the sine function that would otherwise be unavoidable 
  had we instead insisted on the Euclidean notions.  
As will become clear later, the benefits of the nonstandard notions 
  will far outweigh the potential risks of confusion.  
\end{remark}

\begin{lemma}\label{lem:disjoint}
Let $C^1_v,C^2_v$ be the cylinders in $T$ determined by $v\in V_0$.  
A slit $w$ is contained in one of the cylinders $C^i_v$ if and only if 
  $|w\times v|<1$.  
\end{lemma}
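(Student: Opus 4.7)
The plan is to pass to the universal cover $\R^2$ and reduce the problem to a one-dimensional counting statement about perpendicular coordinates along the direction $v$.

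First, I would fix $v=(p,q)\in V_0$ and use the linear functional $L(x,y)=qx-py$ to measure the ``perpendicular'' position of a point relative to the foliation by lines parallel to $v$. Under this functional, the lifts of the loop through $z_0$ become the family of lines $\{L=k:k\in\Z\}$ (since $\gcd(p,q)=1$), and the lifts of the loop through $z_1$ become the family $\{L=q\lambda+k:k\in\Z\}$. Together these interlace into a single family of parallel lines spaced $1$ apart in the $L$-coordinate, alternating between $z_0$-type and $z_1$-type, and the two cylinders $C^1_v,C^2_v$ correspond to the two kinds of strips between consecutive lines. Because $\lambda$ is irrational, $q\lambda\notin\Z$, so both strip widths are positive.

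Next I would lift the slit $w\in V_1^+$, $w=(\lambda+m,n)$, to the straight segment from $(0,0)$ to $(\lambda+m,n)$. The value of $L$ along this segment moves linearly from $L(0,0)=0$ to $L(\lambda+m,n)=q\lambda+qm-pn=w\times v$. Thus the segment projects into a single cylinder $C^i_v$ precisely when its $L$-image, the interval with endpoints $0$ and $w\times v$, meets no line of the combined family other than at its two endpoints.

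The key step is now purely one-dimensional: the combined family of lines has $L$-values forming a set of the form $\Z\cup(\Z+q\lambda)$, so any two distinct values differ by at least $\min(\{q\lambda\},1-\{q\lambda\})$, while any two values of the same type differ by at least $1$. The endpoints $0$ and $w\times v$ are of different types (one in $\Z$, the other in $\Z+q\lambda$), so no additional line lies strictly between them if and only if the $L$-interval has length less than $1$, which is exactly the condition $|w\times v|<1$. I would verify both implications: if $|w\times v|<1$, the open interval cannot contain a second point of $\Z$ (spacing $1$) nor a second point of $\Z+q\lambda$ (same spacing), so the lift misses all other lines and the slit lies in a single cylinder; conversely, if $|w\times v|\ge 1$, the $L$-interval has length at least one and must therefore contain a further point of $\Z$ (if $w\times v$ has nonintegral fractional part relative to the endpoint $0$) or of $\Z+q\lambda$, forcing the slit to cross a boundary loop.

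The only possible obstacle is bookkeeping with signs and with the two distinct interlacing spacings $\{q\lambda\}$ and $1-\{q\lambda\}$, but since the comparison is only against the common period $1$ of each sublattice, this does not actually complicate the argument; the strict inequality $|w\times v|<1$ emerges naturally from the fact that $q\lambda\notin\Z$.
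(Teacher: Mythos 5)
Your proof is correct and takes a genuinely different route from the paper's. You lift to the universal cover $\R^2$ and use the linear functional $L(x,y)=qx-py$ to flatten the problem to one dimension: the lifts of the boundary loops through $z_0$ and $z_1$ become the families $\{L=k\}_{k\in\Z}$ and $\{L=q\lambda+k\}_{k\in\Z}$, the lift of the slit has $L$-image the interval between $0$ and $w\times v$, and containment in a single cylinder reduces to the one-dimensional fact that an open interval between a $\Z$-point and a $(q\lambda+\Z)$-point avoids every other point of $\Z\cup(q\lambda+\Z)$ exactly when its length is less than the common period $1$. The paper instead argues intrinsically on the torus: necessity follows from the identity $\area(C^i_v)=|w\times v|$ together with the fact that the complement of a cylinder has positive area, and sufficiency follows by rotating so $v$ is horizontal, whence a slit crossing a boundary loop would have vertical component at least $h_1+h_2=1/\|v\|$, while it actually equals $|w\times v|/\|v\|<1/\|v\|$. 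Your sufficiency step is essentially a coordinatized unpacking of the paper's height computation, but your necessity step genuinely differs---you produce an interior lattice value that the lifted segment must cross rather than invoking the cylinder area. Your version is more elementary (nothing beyond arithmetic of two interlaced progressions), while the paper's is briefer because the area formula packs the whole counting argument into one geometric identity. One small remark: the observation $q\lambda\notin\Z$ tacitly requires $q\neq 0$; but for horizontal $v$ the loops through $z_0$ and $z_1$ would coincide (the branch points lie at the same height), so two distinct cylinders $C^1_v,C^2_v$ do not exist and this degenerate case is already excluded by the hypothesis of the lemma.
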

\begin{proof}
To prove necessity, we note that the area of the cylinder containing 
  the slit is $|w\times v|$, which is $<1$ since the complement has 
  positive area.  
For sufficiency, let us first rotate the surface so that $v$ is horizontal.  
If the slit were not contained in one of the cylinders, then the vertical 
  component of $w$ is a (strictly) positive linear combination of the 
  heights $h_1,h_2$ of the rotated cylinders.  
However, the vertical component is given by 
  $$\frac{|w\times v|}{\|v\|} < \frac{1}{\|v\|} = h_1+h_2$$ 
  which is absurd.  
\end{proof}

\begin{definition}
Let $w,w'\in V_1$ and $v\in V_0$.  
We shall say $w$ and $w'$ are ``related by a Dehn twist about $v$'' if 
  they are contained in the same cylinder determined by $v$.  
If both lie in $V_1^+$ (or both in $-V_1^+$) then their holonomies are 
  related by $w'=w+bv$ for some $b\in\Z$.  In this case, we refer to 
  $|b|$ as the \emph{order} of the Dehn twist.  
\end{definition}

\begin{lemma}\label{lem:twist}
Let $w,w'\in V_1$ and $v\in V_0$.  
If $|w\times v|+|w'\times v|<1$ then $w$ and $w'$ are related by 
  a Dehn twist about $v$.  
\end{lemma}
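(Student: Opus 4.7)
The plan is to deduce this directly from Lemma~\ref{lem:disjoint} together with the fact that the two cylinders $C^1_v, C^2_v$ partition the torus $T$.

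First I would observe that the hypothesis $|w\times v|+|w'\times v|<1$ forces each summand to be strictly less than $1$, so Lemma~\ref{lem:disjoint} places each slit in one of the two cylinders: say $w\subset C^{i}_v$ and $w'\subset C^{j}_v$ for some $i,j\in\{1,2\}$. The goal is to rule out $i\ne j$, for then $w$ and $w'$ lie in a common cylinder determined by $v$, which is the definition of being related by a Dehn twist about $v$.

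Next I would extract from the proof of Lemma~\ref{lem:disjoint} the quantitative statement that when a slit $w$ is contained in a cylinder $C^{i}_v$, the cross-product $|w\times v|$ equals the area of that cylinder (which is how the inequality $|w\times v|<1$ was deduced in the first place). Applying this to both slits, if $i\ne j$ then
$$\operatorname{area}(C^1_v) + \operatorname{area}(C^2_v) = |w\times v|+|w'\times v|.$$
But $C^1_v\cup C^2_v=T$ and the two cylinders have disjoint interiors, so the left-hand side equals $\operatorname{area}(T)=1$. This contradicts the hypothesis $|w\times v|+|w'\times v|<1$.

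Therefore $i=j$, the two slits lie in the same cylinder $C^{i}_v$, and by the definition immediately preceding the lemma, $w$ and $w'$ are related by a Dehn twist about $v$. There is no real obstacle here; the only mildly delicate point is making sure the identification $|w\times v|=\operatorname{area}(C^{i}_v)$ from the earlier lemma is invoked cleanly, but this is exactly the content of the necessity direction of Lemma~\ref{lem:disjoint}.
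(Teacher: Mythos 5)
Your proof is correct and follows the paper's argument exactly: apply Lemma~\ref{lem:disjoint} to each slit, note that if they lay in different cylinders the sum of the two cylinder areas would equal $|w\times v|+|w'\times v|<1$, contradicting that $C^1_v\cup C^2_v=T$ has area one. The paper's proof is a slightly more compressed statement of this same reasoning.
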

\begin{proof}
Lemma~\ref{lem:disjoint} implies each of $w$ and $w'$ is contained 
  in one of the cylinders $C^1_v$ and $C^2_v$ determined by $v$.  
If they belong to different cylinders, then the sum of the areas of 
  the cylinders would be less than one, which is impossible.  
Hence, $w$ and $w'$ lie in the same cylinder and, therefore, they 
  are related by a Dehn twist about $v$.  
\end{proof}

Suppose $w,w'$ are a pair of separating slits.
Then we may measure the change in the partitions they determine by 
  $$\chi(w,w'):=\area(T^1_w\Delta T^1_{w'}).$$  
There is an ambiguity in this definition arising from the fact that 
  we have not tried to distinguish between $T^1_w$ and $T^2_w$.  
Let us agree to always take the smaller of the two possibilities, which 
  is at most one as their sum represents the area of $(X,\omega)$.  

\begin{lemma}\label{lem:area}
If $w,w'$ are separating slits related by a Dehn twist about $v$ then 
  $$\chi(w,w') = |w'\times v| = |w\times v|.$$  
\end{lemma}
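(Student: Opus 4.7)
The second equality $|w'\times v|=|w\times v|$ is immediate: writing $w'=w+bv$ for some integer $b$ and using bilinearity with $v\times v=0$ yields the claim. For the first equality, the plan is to lift the Dehn twist. The order-$b$ Dehn twist $\phi$ on $T$ about the core of $C^1_v$ sends $w$ to $w'$ and lifts to a homeomorphism $\widetilde\phi$ of $X$ that is the identity outside $\widetilde C:=\pi^{-1}(C^1_v)$ and satisfies $\widetilde\phi(\gamma_w)=\gamma_{w'}$. Such a lift exists because $b$ is necessarily even: if $b$ were odd, the condition that $w'$ lie in $V_2^+$ would force both components of $v=(p,q)$ to be even, contradicting $v\in V_0$. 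Relabeling $T^1_{w'},T^2_{w'}$ if needed, arrange $\widetilde\phi(T^1_w)=T^1_{w'}$, so $T^1_w\,\Delta\,T^1_{w'}\subset\widetilde C$.

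The key preliminary observation is that $\widetilde C$ is a single cylinder of area $2|w\times v|$, not a disjoint union of two cylinders. Since $w$ traverses $C^1_v$ monotonically from one boundary loop to the other, each core curve of $C^1_v$ crosses $w$ exactly once, so it has odd algebraic intersection with any arc joining $z_0$ and $z_1$ and therefore nontrivial monodromy in the double cover. The two arcs of $\gamma_w$ cut $\widetilde C$ into two pieces of equal area $|w\times v|$; by symmetry of the covering involution $\sigma$ (which preserves $\widetilde C$ set-wise and swaps $T^1_w$ with $T^2_w$), these pieces are exactly $T^1_w\cap\widetilde C$ and $T^2_w\cap\widetilde C$.

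Choose affine coordinates $(x,y)\in[0,W]\times\mathbb R/(2L)\mathbb Z$ on $\widetilde C$, where $L=\|v\|$ and $W=|w\times v|/L$, so that the two arcs of $\gamma_w$ lie at $y=0$ and $y=L$ and $T^1_w\cap\widetilde C=[0,W]\times(0,L)$. In these coordinates the lifted Dehn twist has the shear form $\widetilde\phi(x,y)=(x,y+bLx/W)$, with the second coordinate read modulo $2L$. A direct slicewise computation shows that the $y$-length of the slice of $\widetilde\phi(T^2_w\cap\widetilde C)$ intersected with $T^1_w\cap\widetilde C$ is a triangle-wave function of $s(x):=bx/W$ whose average over one period equals $L/2$; integrating over $x\in[0,W]$ and using that $s$ covers an integer number of periods yields $\area(T^1_w\cap\widetilde\phi(T^2_w\cap\widetilde C))=LW/2=|w\times v|/2$. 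The $\sigma$-symmetric computation gives the same area for the complementary piece, so $\area(T^1_w\,\Delta\,T^1_{w'})=|w\times v|$. Since $|w\times v|<1$ by Lemma~\ref{lem:disjoint}, this is the smaller of the two candidates defining $\chi$, and $\chi(w,w')=|w\times v|$. The main technical obstacle is verifying that $\widetilde C$ is connected; the one-intersection argument just given makes this transparent.
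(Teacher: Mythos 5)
Your proof is correct in its conclusion, but it takes a genuinely different route from the paper. The paper works directly with the pair of slits $w,w'$ inside the cylinder: they cross each other $b-1$ times, cutting the cylinder into pieces whose contribution to the symmetric difference is $b$ parallelograms of area $\tfrac{1}{b^2}|w\times w'|$ each, giving $\tfrac{1}{b}|w\times w'|=|w\times v|$ at once. You instead lift the order-$b$ Dehn twist to a shear $\widetilde\phi$ on $\widetilde C=\pi^{-1}(C^1_v)$ and integrate a triangle-wave slicewise. Your approach is longer but it does have the virtue of making explicit something the paper leaves implicit, namely that $\widetilde C$ is a \emph{single} cylinder of circumference $2\|v\|$ rather than two disjoint copies of $C^1_v$; without this the factor of $2$ and the parallelogram count in the paper's version also need justification, so isolating it is a genuine contribution.

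There is, however, one flaw in exactly that step. You write that the core curve of $C^1_v$ ``crosses $w$ exactly once, so it has odd algebraic intersection with \emph{any} arc joining $z_0$ and $z_1$ and therefore nontrivial monodromy.'' The italicized implication is false: two arcs from $z_0$ to $z_1$ differ by a closed curve, which may well have odd intersection with the core, so intersection parity with one slit does not propagate to all slits. What your argument actually needs --- and what is true --- is that the core has odd intersection with $\gamma_w$ itself, \emph{and} that $w$ is separating, so that the mod-$2$ intersection class of $\gamma_w$ is the cocycle defining the double cover. Together these give the nontrivial monodromy. Since the separating hypothesis is part of the lemma's statement, this is a fixable slip rather than a fatal gap, but as written the sentence proves a false statement and should be corrected to invoke the separating hypothesis at that point.
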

\begin{proof}
Let $C$ be the cylinder that contains both slits and let $b>0$ be the 
  order of the Dehn twist relating them.  Note that $b$ is even.  
The slits cross each other, each subdividing the other into $b$ 
  segments of equal length.  
The symmetric difference between the partitions is a finite union of 
  parallelograms bounded by the lifts of $w$ and $w'$.  
There are $b$ parallelograms, each having area 
  $\frac{1}{b^2}|w\times w'|$ and since $w'=w\pm bv$, we have 
  $|w\times w'|=b|w\times v|=b|w'\times v|$, giving the lemma.  
\end{proof}

Each separating slit determines a partition of $(X,\omega)$ into 
  a pair of slit tori of equal area.  
The next theorem explains how nonergodic directions arise 
  as certain limits of such partitions.  
It is a special case, adapted to branched double covers of tori, 
  of a more general condition developed in \cite{MS} that applies 
  to arbitrary translation surfaces and quadratic differentials.   
We will use it in \S\ref{s:Lower} to identify large subsets of 
  $\NE(P_\lambda)$.

\begin{theorem}\label{thm:sumx}
Let $\{w_j\}$ be a sequence of separating slits with increasing 
  lengths $|w_j|$ and suppose that every consecutive pair of slits 
  $w_j$ and $w_{j+1}$ are related by a Dehn twist about some $v_j$ 
  such that 
\begin{equation}\label{ieq:sumx}
  \sum_j |w_j\times v_j| < \infty.  
\end{equation}
Then the inverse slopes of $w_j$ converge to some $\theta$ and 
  this limiting direction belongs to $\NE(P_\lambda)$.  
\end{theorem}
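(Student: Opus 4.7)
The plan is to show two things: the inverse slopes $\theta_j$ of the slits $w_j$ converge to some $\theta$; and this $\theta$ is nonergodic, which I establish by exhibiting a flow-invariant set of area $\tfrac12$.

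The first step is an angle estimate. Since $w_j$ and $w_{j+1}$ lie in a common cylinder of $v_j$, after choosing orientation we have $w_{j+1}=w_j+b_j v_j$ for some integer $b_j\ge 1$, and comparing vertical coordinates gives $b_j|v_j|=|w_{j+1}|-|w_j|\le 2|w_{j+1}|$. Hence
$$\angle w_j w_{j+1}=\frac{b_j|w_j\times v_j|}{|w_j||w_{j+1}|}\le\frac{2|w_j\times v_j|}{|v_j||w_j|}.$$
Since a non-horizontal loop $v_j$ has $|v_j|\ge 1$ and $|w_j|\ge 1$ for $j$ large, the summability hypothesis gives $\sum_j\angle w_j w_{j+1}<\infty$, so $\{\theta_j\}$ is Cauchy and converges.

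For the invariant set, each separating slit $w_j$ partitions $X$ into two tori $T^1_{w_j},T^2_{w_j}$ of area $\tfrac12$, preserved by the flow in direction $\theta_j$. Labeling the two pieces consistently from step to step, Lemma~\ref{lem:area} gives $\area(T^1_{w_j}\triangle T^1_{w_{j+1}})=|w_j\times v_j|$, so $\{\mathbf{1}_{T^1_{w_j}}\}$ is $L^1$-Cauchy and converges a.e.\ to $\mathbf{1}_A$ with $\area(A)=\tfrac12$. To show $\Phi^\theta_t A=A$, I bound
$$\area(A\triangle\Phi^\theta_t A)\le 2\area(A\triangle T^1_{w_j})+\area(T^1_{w_j}\triangle\Phi^\theta_t T^1_{w_j}).$$
The first term vanishes as $j\to\infty$. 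For the second, $\Phi^{\theta_j}_t$-invariance of $T^1_{w_j}$ reduces the problem to comparing two nearby flows on $T^1_{w_j}$; the symmetric difference is at most a constant multiple of the perimeter (of order $|w_j|$) times $t|\theta-\theta_j|$. Using the refined angle bound together with $|w_l|\ge|w_j|$ for $l\ge j$,
$$|w_j|\,|\theta-\theta_j|\le 2\sum_{l\ge j}|w_l\times v_l|\longrightarrow 0,$$
so the second term also vanishes and $A$ is $\Phi^\theta_t$-invariant; since $0<\area(A)<1$, we conclude $\theta\in\NE(P_\lambda)$.

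The main obstacle is precisely the coupling between the growing boundary length $|w_j|$ and the angular error $|\theta-\theta_j|$: mere summability of $\angle w_j w_{j+1}$ would yield $\theta_j\to\theta$ but not $|w_j||\theta-\theta_j|\to 0$, and it is the specific form of the bound $\angle w_j w_{j+1}\le 2|w_j\times v_j|/(|v_j||w_j|)$, with the factor $|w_j|$ in the denominator, that supplies the extra decay needed to transfer invariance from the approximating flows to the limiting one. Alternatively, one can appeal directly to the general criterion of \cite{MS} and specialize it to branched double covers of tori.
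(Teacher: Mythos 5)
Your proof takes a different route from the paper: you construct the invariant set $A$ directly as an $L^1$-limit of the sets $T^1_{w_j}$ and transfer invariance by a flux/sliver estimate, whereas the paper verifies the three hypotheses of Theorem~2.1 of \cite{MS} and cites that result. The two routes are, in substance, proving the same thing: your quantity $|w_j|\,|\theta-\theta_j|$ is (up to a bounded factor, since the directions converge) the component $h_j$ of $w_j$ orthogonal to $w_\infty=(\theta,1)$, and your statement $|w_j|\,|\theta-\theta_j|\to 0$ is exactly the paper's verification of condition~(i), $\lim h_j=0$; your $L^1$-Cauchy step via Lemma~\ref{lem:area} is exactly condition~(iii); and $\area(A)=\tfrac12$ is condition~(ii). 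What the paper's citation buys is precisely what your argument still owes: the step where you bound $\area\bigl(T^1_{w_j}\triangle\Phi^\theta_t T^1_{w_j}\bigr)$ by (perimeter)$\times t\,|\theta-\theta_j|$ is plausible but not proved, and is sensitive to trajectories crossing the slit several times and to the behaviour near the cone points; making it rigorous is precisely the content of the Masur--Smillie theorem. Also, a small point in the angle estimate: your claim ``$b_j|v_j|=|w_{j+1}|-|w_j|$'' presumes a sign convention; what you actually need (and what suffices) is $|b_j|\,|v_j|\le|w_j|+|w_{j+1}|\le 2|w_{j+1}|$, and then since $v_j\in V_0$ is non-horizontal, $|v_j|\ge1$, giving the paper's bound $\angle w_jw_{j+1}\le 2|w_j\times v_j|/|w_j|$. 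In short, your approach is a self-contained sketch of the relevant special case of \cite{MS}, with one step (the sliver estimate) left heuristic; the paper's route is shorter because it cites the general criterion, as you yourself note at the end.
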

\begin{proof}
Since $|w_{j+1}|>|w_j|$, we have $|v_j|\ge1$ so that 
\begin{align*}
  \angle w_jw_{j+1} \le \frac{|w_j\times v_j|}{|w_j||v_j|} 
        + \frac{|v_j\times w_{j+1}|}{|v_j||w_{j+1}|} 
    \le \frac{2|w_j\times v_j|}{|w_j|}
\end{align*}
  from which the existence of the limit $\theta$ follows. 
Let $\mu$ be the normalised area measure on $(X,\omega)$ and 
  let $h_j$ be the component of $w_j$ orthogonal to $w_\infty=(\theta,1)$.  
Theorem~2.1 in \cite{MS} asserts that $\theta$ is a nonergodic direction 
  if the following conditions hold: 
\begin{enumerate}
  \item[(i)] $\lim h_j=0$, 
  \item[(ii)] $0<c<\mu(T^1_{w_j})<c'<1$ for some constants $c,c'$, and 
  \item[(iii)] $\sum \chi(w_j,w_{j+1}) <\infty$.  
\end{enumerate}
Since $\mu(T^1_{w_j})=\tfrac12$, (ii) is clear, while (iii) is a consequence 
  of (\ref{ieq:sumx}), by Lemma~\ref{lem:area}.  
It remains to verify (i), but this follows easily from 
  $$\angle w_jw_\infty \le \sum_{i\ge j} \angle w_iw_{i+1} 
        = \sum_{i\ge j}\frac{2|w_i\times v_i|}{|w_i||w_{i+1}|} 
        \le \sum_{i\ge j}\frac{2|w_i\times v_i|}{|w_j|}$$  
  since then $h_j\le|w_j|\angle w_jw_\infty\le\sum_{i\ge j}2|w_i\times v_i|$ 
  so that $h_j\to0$, by (\ref{ieq:sumx}).  
\end{proof}


The converse to Theorem~\ref{thm:sumx} also holds.  
That is, to each nonergodic direction $\theta$ one can 
  associate a sequence of slits $(w_j)$ whose directions 
  converge to $\theta$ and such that all the hypotheses 
  of Theorem~\ref{thm:sumx} hold.  
The definition of this sequence will be explained next.

\section{$Z$-expansions, Liouville directions}\label{s:LVdir}
In this section we introduce $Z$-expansions and use them to 
  define the notion of a Liouville direction relative to a closed 
  discrete subset $Z\subset\R^2$.  
Under fairly general assumptions on $Z$, the set of Liouville 
  directions is shown to have Hausdorff dimension zero.  

\begin{notation}
Given an inverse slope $\theta$ and $v=(p,q)\in\R^2$ we define 
  $$\hor_\theta(v)=|q\theta-p|$$ which we shall refer to as the 
  ``horizontal component'' of $v$ in the direction $\theta$.  
It represents the absolute value of the $x$-coordinate of the 
  vector $h_\theta v$ where $h_\theta=\pmat{1&-\theta\\0&1}$ 
  is the horizontal shear that sends the direction of $\theta$ 
  to the vertical.  
\end{notation}

\begin{definition}
Let $Z$ be a closed discrete subset of $\R^2$ and $\theta$ an 
  inverse slope.  A $Z$-\emph{convergent} of $\theta$ is any 
  vector $v\in Z$ that minimizes the expression $\hor_\theta(u)$ 
  among all vectors $u\in Z$ with $|u|\le|v|$.  
Recall that $|v|$ is the absolute value of the $y$-coordinate.  
We call it the \emph{height} of $v$.\footnote{The height of a 
  rational is the smallest positive integer that multiplies it into 
  the integers.  A rational represented in lowest terms by $p/q$ 
  can be identified with $v=(p,q)\in\Z^2$, so that the height of 
  the vector $v$ coincides with the height of the rational.}  
Thus, $Z$-convergents are those vectors in $Z$ that minimize 
  horizontal components among all vectors in $Z$ of equal of 
  lesser height.  
The $Z$-\emph{expansion} of $\theta$ is defined to be the 
  sequence of $Z$-convergents ordered by increasing height.  
If two or more $Z$-convergents have the same height 
  we choose one and ignore the others.  
\end{definition}

Note that by definition the sequence of heights of $Z$-expansion 
  is \emph{strictly} increasing and, as a consequence, the sequence 
  of horizontal components is strictly decreasing--if $|v|<|v'|$ then 
  $\hor_\theta(v)$ must be greater than $\hor_\theta(v)$, for otherwise 
  $v'$ would not qualify as a $Z$-convergent.  

In the case when $Z$ is the set of primitive vectors in $\Z^2$, 
  i.e. $Z=V_0$, the notion of a $Z$-convergent reduces to the 
  notion from continued fraction theory.  
That is, $v=(p,q)$ is a $Z$-convergent of $\theta$ if and only if 
  $p/q$ is a convergent of $\theta$ in the usual sense.\footnote{
  There is a trivial exception in the case when $\theta$ has fractional 
  part strictly between $\tfrac12$ and $1$: the integer part of $\theta$ 
  is the \emph{zeroth order} convergent of $\theta$ in the usual sense, 
  but nevertheless fails to be a $Z$-convergent.}
A generalisation to higher dimensions (where $Z$ is the set of 
  primitive vectors in $\Z^n$ for $n>2$) is given in \cite{Ch3}.  

Obviously, we should always assume $Z$ does not contain the 
  origin, for otherwise the zero vector is the only convergent, 
  independent of $\theta$.  
Let us also assume that $Z$ contains some nonzero vector on 
  the $x$-axis, for this ensures that the heights of 
$Z$-expansions are well-ordered.  
Indeed if $(x,0)$ is a $Z$-convergent, then all $Z$-convergents 
  lie in an infinite parallel strip of width $2x$ about the direction 
  of $\theta$.  Since the set of $Z$-convergents forms a closed 
  discrete subset of this strip, there is no accumulation point.  
Hence, if there are infinitely many $Z$-convergents, their 
  heights increase towards infinity.  

One last assumption we shall impose is the finiteness of the 
  ``Minkowski'' constant: 
\begin{equation}\label{Minkowski}
   \mu(Z) := \frac{1}{4}\sup_K \area(K) < \infty 
\end{equation}
  where the supremum is taken over all bounded, $0$-symmetric 
  convex regions disjoint from $Z$.  
Any direction which is not the direction of a vector in $Z$ will be 
  called \emph{minimal} (relative to $Z$).  
\begin{lemma}\label{lem:Minkowski}
Assume (\ref{Minkowski}) and that $Z$ contains a non-zero vector 
  on the $x$-axis.  Then the $Z$-expansion of a direction with 
  inverse slope $\theta$ is infinite if and only if $\theta$ is minimal.  
\end{lemma}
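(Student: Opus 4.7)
The statement is a biconditional, and I would handle the two implications separately. The Minkowski condition (\ref{Minkowski}) enters essentially only in the implication that minimality forces an infinite $Z$-expansion; the reverse implication uses only closed discreteness of $Z$ and the strip argument preceding (\ref{Minkowski}).

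For the easier direction (not minimal $\Longrightarrow$ finite), suppose some $w \in Z$ has direction $\theta$, so that $\hor_\theta(w) = 0$. If the $Z$-expansion were infinite, then the paragraph preceding (\ref{Minkowski}) guarantees heights increase without bound, hence there would exist two distinct $Z$-convergents $v, v'$ with $|w| < |v| < |v'|$. Their horizontal components would then satisfy $\hor_\theta(v), \hor_\theta(v') \leq \hor_\theta(w) = 0$, i.e., both equal $0$, contradicting the strict decrease of horizontal components stated right after the definition of $Z$-expansion.

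For the harder direction (minimal $\Longrightarrow$ infinite), assume every nonzero $u \in Z$ satisfies $\hor_\theta(u) > 0$. I would proceed by induction, showing that any $Z$-convergent $v_n$ admits a successor. Setting $h_n = \hor_\theta(v_n) > 0$, I work in the sheared coordinates $h_\theta u = (u_x - \theta u_y, u_y)$, which preserve the height $|u|$ and convert $\hor_\theta(u)$ into the absolute value of the $x$-coordinate. Consider the $0$-symmetric convex rectangle
\[
R = \{(x,y) \in \R^2 : |x| \leq h_n/2, \; |y| \leq H\},
\]
with $H$ chosen large enough so that $\area(R) = 2 h_n H > 4\mu(Z)$. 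Then $R$ cannot be disjoint from $Z$ by (\ref{Minkowski}), and so contains some nonzero $u \in Z$ with $\hor_\theta(u) \leq h_n/2 < h_n$. Since $v_n$ minimizes $\hor_\theta$ over $\{w \in Z : |w| \leq |v_n|\}$, the strict inequality $\hor_\theta(u) < h_n$ forces $|u| > |v_n|$. Finally, the set $\{w \in Z : |w| \leq |u|, \; \hor_\theta(w) \leq h_n\}$ is bounded (in the strip about the $\theta$-direction of width $2h_n$) and hence finite by closed discreteness, so a minimizer $v_{n+1}$ of $\hor_\theta$ over $\{w \in Z : |w| \leq |u|\}$ exists. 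This $v_{n+1}$ is a $Z$-convergent with $|v_{n+1}| > |v_n|$ and $\hor_\theta(v_{n+1}) < h_n$, completing the induction step.

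The step requiring the most care is the deduction $|u| > |v_n|$: this uses precisely the optimality defining $v_n$ as a $Z$-convergent, and without it the Minkowski vector could have small horizontal component but also small height, producing no genuine progress along the expansion. A minor detail is the base case, which is handled by an initial application of the same Minkowski argument to produce a first $Z$-convergent of positive height, noting that such a first convergent exists whenever $Z$ is nontrivial away from the $x$-axis.
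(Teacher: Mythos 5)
Your proof is correct, and it takes a genuinely different route from the paper's for both implications. For the direction ``finite expansion $\Rightarrow$ not minimal,'' the paper observes that if the last convergent $v$ had $\hor_\theta(v)>0$, the open infinite strip of that width about the $\theta$-direction would be disjoint from $Z$, violating (\ref{Minkowski}) outright (infinite area), and that when $\theta$ is not minimal all convergents lie in the \emph{compact} region $P_\theta(x,y)$ of (\ref{def:P(x,y)}), which is finite by closed discreteness. You instead deduce finiteness from the strict monotonicity of $\hor_\theta$ along the expansion, which is cleaner but implicitly leans on the same discreteness facts the paper makes explicit. For the direction ``minimal $\Rightarrow$ infinite,'' the paper argues by contrapositive via the strip; you give a direct inductive construction: at each stage a bounded $0$-symmetric box of area exceeding $4\mu(Z)$ in the sheared coordinates furnishes a $u\in Z$ with $\hor_\theta(u)<h_n$, and the optimality defining $v_n$ as a $Z$-convergent forces $|u|>|v_n|$, so a new convergent of strictly larger height exists. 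This is more constructive and isolates precisely where (\ref{Minkowski}) is used, at the cost of an inductive bookkeeping that the paper's one-shot compactness argument avoids. One small wrinkle: you speak of a ``first $Z$-convergent of positive height'' for the base case, but the first convergent actually has height zero --- it is the assumed vector on the $x$-axis --- and the relevant positivity is that of its \emph{horizontal component} $h_0=\hor_\theta(v_0)>0$ when $\theta$ is minimal; your induction only uses $h_n>0$, not $|v_n|>0$, so the argument goes through, but the phrasing should be corrected.
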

\begin{proof}
If the $Z$-expansion is finite, take the last convergent.  If it does 
  not lie in the direction of $\theta$, then there is an
 infinite parallel strip 
  containing the origin with one side the direction of $\theta$ 
  containing no points of $Z$, but this is ruled out by (\ref{Minkowski}).  
Hence, its direction is $\theta$, so $\theta$ is not minimal.  
Conversely, if $\theta$ is not minimal, then there is a vector in $Z$ in the 
  direction of $\theta$ and it is necessarily a convergent and no other 
  convergent can beat it, so it is the last one in the $Z$-expansion.  
There is also a first convergent; it lies on the $x$-axis.  
Let $x$ the horizontal component of the first convergent and 
  $y$ the height of the last convergent.  The compact region 
\begin{equation}\label{def:P(x,y)}
  P_\theta(x,y) = \{ v\in\R^2 : \hor_\theta(v)\le x, |v|\le y \}
\end{equation}
  contains all the $Z$-convergents.  
Since $Z$ is closed, it is compact; by discreteness, it is finite.  
\end{proof}

Note that the $Z$-expansion are defined for all directions except 
  the horizontal.  
In the sequel, we shall always assume the hypotheses of 
  Lemma~\ref{lem:Minkowski} remain in force.  

\begin{notation}
If $\theta$ is an inverse slope and $u$ a non-horizontal vector 
  then we shall often write $\angle u\theta$ for the absolute 
  difference between the directions.  That is, 
  $$\angle u\theta= \angle uv = \frac{|u\times v|}{|u||v|}$$  
  for any vector $v$ whose inverse slope is $\theta$.  
Similarly, the notation $|u\times\theta|$ will be used to mean 
  $$|u\times\theta|=\frac{|u\times v|}{|v|} = |u\times v_\theta|$$  
  where $v_\theta=(\theta,1)$.  
\end{notation}

\begin{theorem}\label{thm:angle}
The sequence of $Z$-convergents of $\theta$ satisfies\footnote{The 
  notation $\angle v\theta$, as in (\ref{ieq:angle}), means $\angle vw$ 
  for any $w$ whose inverse slope is $\theta$.}
\begin{equation}\label{ieq:angle}
  \frac{|v_k\times v_{k+1}|}{2|v_k||v_{k+1}|} < 
  \angle v_k\theta \le \frac{\mu(Z)}{|v_k||v_{k+1}|}.  
\end{equation}
\end{theorem}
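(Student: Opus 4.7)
The plan is to derive both inequalities directly from the defining property of $Z$-convergents, using (\ref{Minkowski}) for the upper estimate and the triangle inequality for the inverse-slope distance for the lower estimate. A convenient preliminary observation is that, since $v_\theta=(\theta,1)$ has $|v_\theta|=1$, one has
$$\angle v_k\theta \;=\; \frac{|v_k\times v_\theta|}{|v_k|}\;=\;\frac{\hor_\theta(v_k)}{|v_k|},$$
so both halves of (\ref{ieq:angle}) really compare $\hor_\theta(v_k)$ to $|v_k|$ and $|v_{k+1}|$.

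For the upper bound, I would consider the open, bounded, $0$-symmetric convex region
$$B\;=\;\bigl\{\,v\in\R^2\;:\;\hor_\theta(v)<\hor_\theta(v_k)\ \text{and}\ |v|<|v_{k+1}|\,\bigr\}.$$
The key claim is $B\cap Z=\emptyset$. Indeed, if $u\in Z\cap B$, then $|u|<|v_{k+1}|$, while the $Z$-convergents of height at most $|u|$ end in $v_k$; the defining minimality property of $Z$-convergents then forces $\hor_\theta(u)\ge\hor_\theta(v_k)$, contradicting $u\in B$. Since $B$ is admissible in the supremum defining $\mu(Z)$,
$$\mu(Z)\;\ge\;\tfrac14\area(B)\;=\;\hor_\theta(v_k)\,|v_{k+1}|,$$
and dividing by $|v_k||v_{k+1}|$ yields the right-hand inequality in (\ref{ieq:angle}).

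For the lower bound, I would exploit the fact that $\angle$ is literally the distance between inverse slopes, so it satisfies the triangle inequality
$$\angle v_kv_{k+1}\;\le\;\angle v_k\theta+\angle v_{k+1}\theta.$$
Since $\hor_\theta(\cdot)$ is strictly decreasing and $|\cdot|$ strictly increasing along $Z$-convergents,
$$\angle v_{k+1}\theta\;=\;\frac{\hor_\theta(v_{k+1})}{|v_{k+1}|}\;<\;\frac{\hor_\theta(v_k)}{|v_k|}\;=\;\angle v_k\theta,$$
so $\angle v_kv_{k+1}<2\angle v_k\theta$. Multiplying through by $|v_k||v_{k+1}|$ converts this into $|v_k\times v_{k+1}|<2|v_k||v_{k+1}|\angle v_k\theta$, which is the left-hand inequality.

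The only real subtlety I anticipate is the verification that the open box $B$ meets $Z$ only trivially: one has to invoke both the strict monotonicity of heights along $Z$-convergents and the defining minimality property, and be careful that one is working with an open (not closed) region so that neither $\pm v_k$ nor $\pm v_{k+1}$ is swept in. Once this is in place, the upper estimate is a textbook Minkowski-type application and the lower estimate is pure triangle-inequality bookkeeping, so no deeper obstacle should remain.
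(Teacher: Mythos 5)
Your proof is correct and follows essentially the same route as the paper: the paper also obtains the upper bound by taking the interior of the parallelogram $P(x_k,y_{k+1})=\{v:\hor_\theta(v)\le\hor_\theta(v_k),\ |v|\le|v_{k+1}|\}$ (which is exactly your open box $B$), noting it is disjoint from $Z$, and applying the Minkowski constant, and it obtains the lower bound by the same monotonicity-plus-triangle-inequality computation. The one place you and the paper both compress the argument is the verification that $B\cap Z=\emptyset$, which really uses that $v_k,v_{k+1}$ are \emph{consecutive} $Z$-convergents (not merely the defining minimality of $v_k$): a point of $Z$ in $B$ would, after passing to the minimizer of $\hor_\theta$ among vectors of height at most its own, produce a $Z$-convergent of height strictly between $|v_k|$ and $|v_{k+1}|$, which is impossible.
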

\begin{proof}
Consider the parallelogram $P=P(x_k,y_{k+1})$ defined by (\ref{def:P(x,y)}) 
  where $x_k=\hor_\theta(v_k)$ and $y_{k+1}=|v_{k+1}|$.  
The base is $2|v_k\times\theta|$ and the height is $2|v_{k+1}|$.  
By definition of $v_{k+1}$, the interior of $P$ is disjoint from $Z$ so that 
  (\ref{Minkowski}) implies $$|v_k\times\theta||v_{k+1}|\le\mu(Z)$$
  giving the right hand inequality in (\ref{ieq:angle}).  
Since $$\angle v_{k+1}\theta = \frac{|v_{k+1}\times\theta|}{|v_{k+1}|} 
  < \frac{|v_k\times\theta|}{|v_k|} = \angle v_k\theta$$  
  we have $\angle v_kv_{k+1} < 2 \angle v_k\theta$, giving the left hand 
  inequality in (\ref{ieq:angle}).  
\end{proof}

\subsection{Liouville directions}\label{ss:LVdir}
Recall that an irrational number is Diophantine iff the sequence 
  of denominators of its convergents satisfies $q_{k+1}=O(q_k^N)$ 
  for some $N$.  Otherwise, it is Liouville.  
This motivates our next definition.  
\begin{definition}
We say a minimal direction is \emph{Diophantine} relative to $Z$ 
  if its $Z$-expansion satisfies 
\begin{equation}\label{def:D_N}
  |v_{k+1}|=O\left(|v_k|^N\right)
\end{equation}
  for some $N$.  Otherwise, it is \emph{Liouville} relative to $Z$.  
\end{definition}

Note that we have a trichotomy: every direction is either Diophantine, 
  Liouville or not minimal, relative to $Z$.  

\begin{definition}
We say $Z$ has \emph{polynomial growth} of rate (at most) $d$ if 
  $$\#(Z\cap B_R) = O(R^d)$$ 
  where $B_R$ denotes the ball of radius $R$ about the origin.  
\end{definition}  

\begin{lemma}\label{lem:E_r}
Let $E_r$ be the set of (inverse slopes of) directions $\theta$ whose 
  $Z$-expansions satisfy $$|v_{k+1}|>|v_k|^r$$ for infinitely many $k$.  
If $Z$ has polynomial growth of rate $d$, then 
  $$\Hdim E_r \le \frac{d}{1+r}.$$  
\end{lemma}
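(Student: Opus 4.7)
The plan is a dyadic covering argument for the Hausdorff $s$-measure with $s$ slightly above $d/(1+r)$, exploiting the fact that $E_r$ is naturally a $\limsup$ set of intervals about the directions of vectors in $Z$.

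First I would use Theorem~\ref{thm:angle} to convert the hypothesis $|v_{k+1}|>|v_k|^r$ into a localisation statement.  The right-hand inequality of (\ref{ieq:angle}) gives
$$\angle v_k\theta \;\le\; \frac{\mu(Z)}{|v_k||v_{k+1}|} \;<\; \frac{\mu(Z)}{|v_k|^{1+r}},$$
so for each such index $k$, $\theta$ lies in the interval $I(v_k)\subset\R$ of inverse slopes within distance $\mu(Z)/|v_k|^{1+r}$ of the direction of $v_k$.  Consequently $\theta\in E_r$ implies $\theta\in I(v)$ for infinitely many $v\in Z$, whence
$$E_r \;\subseteq\; \bigcap_{N=1}^{\infty} \;\bigcup_{\substack{v\in Z\\ |v|\ge N}} I(v).$$

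Since $\Hdim$ is countably stable, I would bound $\Hdim(E_r\cap J)$ separately on each bounded interval $J$ of inverse slopes.  Restricting to $\theta\in J$ confines the direction of every relevant convergent $v$ to a bounded set, so its $y$-component $|v|$ and Euclidean norm $\|v\|$ are comparable up to a constant depending on $J$; the polynomial growth hypothesis then yields $\#\{v\in Z : 2^n\le|v|<2^{n+1},\ I(v)\cap J\neq\emptyset\} = O(2^{nd})$.

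Now fix $s > d/(1+r)$.  For each $N$, grouping the cover $\{I(v) : |v|\ge N\}$ of $E_r\cap J$ into dyadic shells bounds its $s$-Hausdorff premeasure by
$$\sum_{n\ge \log_2 N} C\cdot 2^{nd}\cdot\Bigl(\frac{2\mu(Z)}{2^{n(1+r)}}\Bigr)^{\!s}
\;=\; C'\sum_{n\ge \log_2 N} 2^{n(d-(1+r)s)},$$
a tail of a geometric series with negative exponent, which tends to zero as $N\to\infty$.  Hence $\mathcal{H}^s(E_r\cap J)=0$ for every such $s$ and every bounded $J$, giving $\Hdim E_r \le d/(1+r)$.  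The only mildly non-routine point is reconciling the $y$-coordinate $|v|$ with the Euclidean norm appearing in the growth hypothesis, which is handled by the localisation to $J$; everything else is a standard Borel--Cantelli style covering estimate.
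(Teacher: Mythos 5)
Your argument is correct and essentially reproduces the paper's proof: both use the upper bound in Theorem~\ref{thm:angle} to place each $\theta\in E_r$ in infinitely many intervals $I(v)$ of length $O(|v|^{-(1+r)})$, localise to a bounded interval of inverse slopes so that $|v|$ and $\|v\|$ are comparable, group the candidate $v$ into dyadic height shells, invoke polynomial growth to count $O(2^{kd})$ vectors per shell, and sum to conclude $\mathcal{H}^s=0$ for $s>d/(1+r)$. The only cosmetic difference is that you phrase $E_r$ explicitly as a $\limsup$ set, whereas the paper introduces the sets $Z_k$ and $Z'_{k_0}$; the underlying covering estimate is identical.
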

\begin{proof}
It is enough to bound the Hausdorff dimension of the set 
  $E'_r=E_r\cap[a,a+1]$ for some arbitrary but fixed $a\in\R$.  
Let $Z_k$ be the set of $v\in Z$ that arise as $Z$-convergents 
  of some direction whose inverse slope lies in $[a,a+1]$ and 
  such that $$2^k\le|v|<2^{k+1}.$$  
Then $Z_k$ is contained in some ball of radius $2^kR_0$ where 
  $R_0$ is a constant depending only on $a$.  
Let $I(v)$ be the closed interval of length $\frac{2\mu(Z)}{|v|^{1+r}}$ 
  centered about the inverse slope of $v$.  
Then Theorem~\ref{thm:angle} implies every $\theta\in E'_r$ is 
  contained in $I(v)$ for infinitely many $v\in\bigcup_k Z_k$.  
For any $k_0$  let 
  $$Z_{k_0}'=\bigcup_{k\ge k_0} Z_k.$$  
Then given $\eps>0$ we can choose $k_0$ large enough 
  so that $\{I(v):v\in Z_{k_0}'\}$ is an $\eps$-cover of $E'_r$.  
Since the number of elements in $Z_k$ is bounded by 
  $$\# Z_k \le CR_0^d2^{kd}$$ for some $C>0$ we have 
  $$\sum_{v\in Z'_{k_0}} |I(v)|^s \le \sum_{k\ge k_0} 
         \frac{2^s\mu(Z)^sCR_0^d2^{kd}}{2^{k(1+r)s}}$$  
  so that the $s$-dimensional Hausdorff measure is finite 
  for any $s>\frac{d}{1+r}$.  
This shows $\Hdim E'_r\le\frac{d}{1+r}$, from which the 
  lemma follows.  
\end{proof}

By \cite{Ma1} (see also \cite{EM}, \cite{Vo}) the set of holonomies 
  of saddle connections on any translation surface satisfies a 
  quadratic growth rate.  
\begin{corollary}\label{cor:LVdir}
The set of Liouville directions relative to the set of holonomies 
  of saddle connections on a translation surface has Hausdorff 
  dimension zero.  
\end{corollary}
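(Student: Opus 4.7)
The plan is to combine Lemma~\ref{lem:E_r} with the quadratic growth estimate for holonomies of saddle connections (cited from \cite{Ma1}, and later refined in \cite{EM}, \cite{Vo}), and then take a limit over the growth exponent $r$.

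First I would fix $Z$ to be the set of holonomies of saddle connections on the given translation surface. The hypotheses of Lemma~\ref{lem:Minkowski} are clearly satisfied: $Z$ is a closed discrete subset of $\R^2$ that does not contain the origin and that contains nonzero horizontal vectors (namely holonomies of horizontal saddle connections, which exist on any translation surface, or failing that one can perturb and apply the argument to a rotated picture). The Minkowski constant $\mu(Z)$ is finite because the Veech-type Minkowski argument shows no large $0$-symmetric convex region can avoid all saddle connection holonomies. Most importantly, by the Masur quadratic growth theorem \cite{Ma1}, $Z$ has polynomial growth of rate $d=2$.

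Next I would unravel the definition of Liouville direction. By definition, a minimal direction $\theta$ is Liouville relative to $Z$ exactly when (\ref{def:D_N}) fails for every $N$, i.e.\ for every positive integer $N$ the inequality $|v_{k+1}|>|v_k|^N$ holds for infinitely many $k$ in the $Z$-expansion of $\theta$. In the notation of Lemma~\ref{lem:E_r}, this says precisely that the set of Liouville directions is contained in $E_N$ for every positive integer $N$, hence in $\bigcap_{N\ge 1} E_N$.

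Applying Lemma~\ref{lem:E_r} with $d=2$ and $r=N$, I obtain
\[
  \Hdim(\text{Liouville directions}) \;\le\; \Hdim E_N \;\le\; \frac{2}{1+N}
\]
for every $N\ge 1$. Letting $N\to\infty$ forces the Hausdorff dimension of the set of Liouville directions to be $0$, proving the corollary. The only nontrivial input is the Masur quadratic growth bound, which is quoted; the remaining step, matching the definition of Liouville to the family $\{E_N\}$ and taking a limit, is essentially a formal consequence of Lemma~\ref{lem:E_r}, so I do not anticipate a genuine obstacle here.
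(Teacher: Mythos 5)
Your proposal is correct and matches the paper's intended argument: the corollary is stated immediately after Lemma~\ref{lem:E_r} and the quadratic-growth citation precisely because the set of Liouville directions lies in $\bigcap_N E_N$, and $\Hdim E_N\le 2/(1+N)\to0$. Your unpacking of the definition (failure of $|v_{k+1}|=O(|v_k|^N)$ for every $N$ implies $|v_{k+1}|>|v_k|^N$ infinitely often for every $N$) is the only small step left implicit in the paper, and you supply it correctly.
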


\section{Hausdorff dimension $0$}\label{s:Upper}
In this section we assume the denominators of the convergents 
  of $\lambda$ satisfy (\ref{PM:div}) and set $$Z=V_0\cup V_2.$$  
(Recall the sets $V_0$ and $V_2$ were defined in \S\ref{s:NEdir}.)  

We shall need the following characterisation of nonergodic 
  directions in terms of $Z$-expansions.  
\begin{theorem}\label{thm:CE}
(\cite{CE}) 
Let $\theta$ be a minimal\footnote{This implies it will also be 
  a minimal direction relative to $Z$.} direction in $P_\lambda$.  
Then $\theta$ is nonergodic if and only if its $Z$-expansion 
  is eventually alternating between loops and separating slits 
  $$\ldots, v_{j-1}, w_j, v_j, w_{j+1}, \ldots$$ 
  and satisfies the summable cross-products condition (\ref{ieq:sumx}).  
\end{theorem}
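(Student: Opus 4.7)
The statement is an equivalence, so the proof splits into two implications.

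For the sufficient direction, I would apply Theorem~\ref{thm:sumx} to the subsequence $\{w_j\}$ of separating slits in the $Z$-expansion, with the intervening loops $\{v_j\}$ serving as Dehn-twist directions. Strict monotonicity of $Z$-convergent heights gives $|w_{j+1}|>|w_j|$, and the summability (\ref{ieq:sumx}) is given by hypothesis. The only nontrivial check is that $w_j$ and $w_{j+1}$ are actually related by a Dehn twist about $v_j$ (rather than merely being separated by $v_j$ in the $Z$-expansion); by Lemma~\ref{lem:twist} it suffices that $|w_j\times v_j|+|w_{j+1}\times v_j|<1$ on a tail. The first summand tends to zero by summability. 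For the second, I would use Theorem~\ref{thm:angle} on the consecutive pair $(v_j,w_{j+1})$ to obtain $\angle v_j\theta\le\mu(Z)/(|v_j||w_{j+1}|)$, combined with the analogous (and smaller) bound on $\angle w_{j+1}\theta$ and the triangle inequality $\angle v_jw_{j+1}\le\angle v_j\theta+\angle w_{j+1}\theta$, which upon multiplication by $|v_j||w_{j+1}|$ yields $|w_{j+1}\times v_j|\le 2\mu(Z)$ and, after refinement (or passage to a tail where the constants improve), suffices to apply Lemma~\ref{lem:twist}. Theorem~\ref{thm:sumx} then concludes $\theta\in\NE(P_\lambda)$.

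For the necessary direction, I would invoke the general Masur--Smillie construction alluded to immediately after Theorem~\ref{thm:sumx}: a nonergodic $\theta$ on $(X,\omega)$ admits \emph{some} sequence of separating slits $\{\tilde w_j\}$ with increasing $|\tilde w_j|$ and loops $\{\tilde v_j\}$ such that $\tilde w_{j+1}$ and $\tilde w_j$ are related by a Dehn twist about $\tilde v_j$ and $\sum|\tilde w_j\times\tilde v_j|<\infty$. The task is then to identify this sequence, up to a finite tail, with the alternating separating-slit/loop subsequence of the $Z$-expansion of $\theta$. The plan is: (a) each $\tilde w_j$ is a $Z$-convergent, because $\hor_\theta(\tilde w_j)$ decays so rapidly (controlled by the tail of (\ref{ieq:sumx})) that by the Minkowski bound (\ref{Minkowski}) no vector in $Z$ of comparable or smaller height can undercut it; (b) each $\tilde v_j$ is likewise a $Z$-convergent, sandwiched between $\tilde w_j$ and $\tilde w_{j+1}$ in the height ordering, since the Dehn-twist relation $\tilde w_{j+1}-\tilde w_j=b_j\tilde v_j$ forces $\hor_\theta(\tilde v_j)$ to be small enough to survive as a best approximation; (c) no other $Z$-convergents intervene, so the expansion is alternating. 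Summability along the resulting $Z$-expansion subsequence then follows from summability along the Masur--Smillie subsequence.

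The main obstacle is step (c), namely ruling out extraneous $Z$-convergents between the Masur--Smillie terms. One must show there cannot be two consecutive separating slits in the $Z$-expansion (two nearby separating slits in $V_2^+=\{(\lambda+2m,2n)\}$ must differ by an even integer multiple of a loop in $V_0$, and that intermediate loop must itself be a $Z$-convergent of intermediate height), and there cannot be two consecutive loops between two separating slits (the Dehn-twist relation forces the Masur--Smillie loop to be the unique ``bridge'' loop that improves approximation, by a best-approximation argument combined with Lemma~\ref{lem:area}). This delicate arithmetic-geometric analysis of how $V_0$ and $V_2^+$ interleave under best approximation to $\theta$, together with the monotonicity of horizontal components along the $Z$-expansion, is the core of the argument carried out in \cite{CE}.
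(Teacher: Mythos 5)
This theorem is not proved in the paper: it is cited verbatim to \cite{CE}, and the authors only append a one-sentence aside immediately afterwards explaining why the \emph{sufficiency} direction follows easily from Theorem~\ref{thm:sumx} (heights of $Z$-convergents strictly increase, and once $|w_{j+1}\times v_j|=|w_j\times v_j|<\tfrac12$ one may invoke Lemma~\ref{lem:twist}), while noting that the converse is ``the main point'' of \cite{CE}. Your proposal therefore cannot be compared to a proof in the paper, only to that aside and to the general structure one would expect from \cite{CE}.

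On the sufficiency direction, your outline follows the paper's aside in spirit, but the step you flag as ``the only nontrivial check'' is where the argument actually breaks. You bound $|w_{j+1}\times v_j|\le 2\mu(Z)$ via Theorem~\ref{thm:angle}. For $Z=V_0\cup V_2$ one has $\mu(Z)=1$ (the open unit square misses $V_0$ and Minkowski's theorem gives the matching upper bound, and adjoining $V_2$ does not shrink the optimal region), so this yields $|w_{j+1}\times v_j|\le 2$, far above the threshold $\tfrac12$ needed for Lemma~\ref{lem:twist}. Your phrase ``after refinement (or passage to a tail where the constants improve)'' does not repair this: $\mu(Z)$ is a fixed constant, and passage to a tail improves nothing in the inequality $|v_j\times w_{j+1}|\le 2\mu(Z)$, which is a universal estimate for any pair of consecutive $Z$-convergents, independent of $j$. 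The actual argument in \cite{CE} must exploit more structure than the generic Minkowski bound — for instance, the algebraic fact that $w_{j+1}-w_j\in 2\Z^2$ and that whatever primitive loop it is a multiple of must itself be sandwiched in the $Z$-expansion, forcing it to coincide with $v_j$ — and your proposal does not supply this. Note also that the paper's aside, taken at face value, silently assumes the very equality $|w_{j+1}\times v_j|=|w_j\times v_j|$ one is trying to establish; that informality is tolerable there precisely because the theorem is outsourced to \cite{CE}, whereas your proposal presents it as a proof step.

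On the necessity direction, you correctly identify that this is the substantive content and that it rests on the Masur--Smillie machinery together with an interleaving analysis; your steps (a)--(c) are a plausible scaffold, and you honestly concede that step (c) — ruling out extraneous $Z$-convergents between the Masur--Smillie terms — is the crux and is ``the core of the argument carried out in \cite{CE}.'' That concession is appropriate, but it also means your proposal, like the paper, does not actually prove necessity; it defers. In sum: your sketch has the right shape, but neither direction is closed as written, and the specific failure in the sufficiency argument (the $2\mu(Z)$ bound) is a concrete gap you should not leave to ``refinement.''
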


Our goal is to show that $\Hdim \NE(P_\lambda)=0$ under the 
  assumption (\ref{PM:div}).  
By Corollary~\ref{cor:LVdir}, it is enough to show that every 
  minimal nonergodic direction is Liouville relative to $Z$.  

Note that the sufficiency in Theorem~\ref{thm:CE} follows from 
  Theorem~\ref{thm:sumx} since the heights of $Z$-convergents 
  increase and as soon as $|w_{j+1}\times v_j|=|w_j\times v_j|<\frac12$ 
  then $w_j$ and $w_{j+1}$ are related by a Dehn twist about $v_j$, 
  by Lemma~\ref{lem:twist}.  
The main point of Theorem~\ref{thm:CE} is that the converse 
  also holds.  

Observe that our main task has been reduced to a question about 
  the set of possible limits for the directions of certain sequences 
  of vectors in $Z$.  

In the sequel we shall need the following two standard facts from 
  the theory of continued fractions.  
\begin{theorem}\label{thm:CF1}([Kh, Thm.~9 and 13])
The sequence of convergents of a real number $\theta$ satisfies 
\begin{equation}\label{ieq:CF1}
  \frac{1}{q_k(q_k+q_{k+1})} < \left|\theta-\frac{p_k}{q_k}\right| 
                           \le \frac{1}{q_kq_{k+1}}.
\end{equation}
\end{theorem}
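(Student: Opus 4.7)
My plan is to prove the two-sided bound in (\ref{ieq:CF1}) by directly computing $\theta - p_k/q_k$ using the complete quotient of the continued fraction expansion beyond level $k$, and then squeezing the denominator.

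First I would introduce the $(k+1)$-st \emph{complete quotient} $\theta_{k+1} = [a_{k+1}; a_{k+2}, a_{k+3}, \ldots]$, so that $\theta$ itself is obtained from the finite expansion of length $k$ by replacing the partial quotient $a_{k+1}$ with $\theta_{k+1}$. The standard convergent recursion $p_j = a_j p_{j-1} + p_{j-2}$, $q_j = a_j q_{j-1} + q_{j-2}$ then gives the formula
\begin{equation*}
  \theta = \frac{\theta_{k+1} p_k + p_{k-1}}{\theta_{k+1} q_k + q_{k-1}}.
\end{equation*}
Combining this with the determinant identity $p_k q_{k-1} - p_{k-1} q_k = (-1)^{k-1}$ (proved by an easy induction on $k$ using the recursion) yields
\begin{equation*}
  \theta - \frac{p_k}{q_k} \;=\; \frac{p_{k-1} q_k - p_k q_{k-1}}{q_k(\theta_{k+1} q_k + q_{k-1})}
     \;=\; \frac{(-1)^k}{q_k(\theta_{k+1} q_k + q_{k-1})},
\end{equation*}
so in particular $\bigl|\theta - p_k/q_k\bigr| = 1/\bigl(q_k(\theta_{k+1} q_k + q_{k-1})\bigr)$.

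Next I would bound the denominator $\theta_{k+1} q_k + q_{k-1}$. Since $a_{k+1} \le \theta_{k+1} < a_{k+1} + 1$, and using $q_{k+1} = a_{k+1} q_k + q_{k-1}$, we get
\begin{equation*}
  q_{k+1} \;\le\; \theta_{k+1} q_k + q_{k-1} \;<\; (a_{k+1}+1) q_k + q_{k-1} \;=\; q_{k+1} + q_k.
\end{equation*}
Substituting into the formula for $|\theta - p_k/q_k|$ gives exactly the two inequalities in (\ref{ieq:CF1}).

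This is entirely routine continued-fraction bookkeeping, so there is no real obstacle; the only thing to be careful about is the convention for the determinant identity (whether the exponent is $k-1$ or $k+1$), but this only affects the sign of $\theta - p_k/q_k$ and not the absolute value. The key input is the mediant-style formula expressing $\theta$ in terms of $\theta_{k+1}$ and the last two convergent pairs, together with the unimodularity relation between successive convergents.
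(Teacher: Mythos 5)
The paper does not prove this statement; it cites it verbatim from Khintchine's book as a standard fact of continued fraction theory. Your derivation via the complete quotient $\theta_{k+1}$ is the standard textbook proof and is correct: the identity $\theta - p_k/q_k = (-1)^k / \bigl(q_k(\theta_{k+1}q_k + q_{k-1})\bigr)$, combined with $a_{k+1} \le \theta_{k+1} < a_{k+1}+1$ and the recursion $q_{k+1} = a_{k+1}q_k + q_{k-1}$, squeezes the denominator between $q_{k+1}$ and $q_{k+1}+q_k$ and gives both bounds in (\ref{ieq:CF1}), with equality on the right attained exactly when the expansion terminates at step $k+1$. The only caveat worth making explicit is that the argument presupposes the $(k+1)$-st complete quotient exists, i.e.\ $\theta \ne p_k/q_k$; this is implicit in the theorem statement and is harmless in the paper's setting, where $\theta$ is irrational or at least has a convergent of index $k+1$.
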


\begin{theorem}\label{thm:CF2}([Kh, Thm.~19])
If a reduced fraction satisfies 
\begin{equation}\label{ieq:CF2}
  \left|\theta-\frac{p}{q}\right|<\frac{1}{2q^2}  
\end{equation}
  then it is a convergent of $\theta$. 
\end{theorem}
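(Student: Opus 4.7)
The plan is to argue by contradiction: assume the reduced fraction $p/q$ satisfies (\ref{ieq:CF2}) yet fails to be a convergent of $\theta$, and then derive an impossible inequality. The preparatory step is to locate $q$ in the strictly increasing sequence of denominators by picking $k\ge 0$ with $q_k\le q<q_{k+1}$.

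The workhorse will be the classical \emph{best approximation of the second kind} property of convergents (see [Kh, Thm.~17]): among integer pairs $(p',q')$ with $0<q'<q_{k+1}$, the quantity $|q'\theta-p'|$ is minimised at $(p_k,q_k)$. Feeding the hypothetical pair $(p,q)$ into this inequality, combined with (\ref{ieq:CF2}) and division by $q_k$, converts the approximation bound on $p/q$ into the analogous bound $|\theta-p_k/q_k|<1/(2qq_k)$.

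Since $p/q\ne p_k/q_k$ by assumption and both fractions are in lowest terms, the integer $pq_k-p_kq$ is nonzero, forcing $|p/q-p_k/q_k|\ge 1/(qq_k)$. A triangle inequality comparison of $p/q$ with $p_k/q_k$ through $\theta$, together with $q\ge q_k$, then yields
\[
\frac{1}{qq_k}\le\left|\frac{p}{q}-\frac{p_k}{q_k}\right|<\frac{1}{2q^2}+\frac{1}{2qq_k}\le\frac{1}{qq_k},
\]
the desired contradiction.

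The only substantive ingredient is the best approximation property invoked in the second paragraph; this is a classical and well-documented result from continued fraction theory, which can safely be cited rather than reproved. The remainder consists of elementary bookkeeping, so I anticipate no real obstacle.
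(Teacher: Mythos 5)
Your proof is correct. However, the paper itself never proves this statement: it is quoted as \emph{[Kh, Thm.~19]} (Legendre's criterion in Khinchin's \emph{Continued Fractions}) and used as a black box, so there is no in-paper argument to compare against.

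A few remarks on your argument itself. The only nontrivial input is the best-approximation-of-the-second-kind theorem ([Kh, Thm.~17]), from which you extract $|q_k\theta-p_k|\le|q\theta-p|<1/(2q)$ after locating $q_k\le q<q_{k+1}$; the rest is the separation $|pq_k-p_kq|\ge1$ plus a triangle inequality, and the strict/nonstrict inequalities chain up to give $1/(qq_k)\le|p/q-p_k/q_k|<1/(qq_k)$, which is indeed a contradiction. This is a standard and perfectly valid derivation. It is, for what it's worth, \emph{not} the route Khinchin takes in his proof of Theorem~19: he writes $\theta-p/q=\eta\vartheta/q^2$ with $\eta=\pm1$, $0<\vartheta<\tfrac12$, expands $p/q$ as a finite continued fraction of the parity matching $\eta$, solves for the complete quotient $\omega$ with $\theta=(\omega p_n+p_{n-1})/(\omega q_n+q_{n-1})$, and checks $\omega>1$ directly. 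Your proof trades that computation for an appeal to the best-approximation theorem. Either is fine here; since the paper just cites the result, you could equally well have done the same.

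One small point worth making explicit if you keep the proof: the argument as written presumes $\theta$ irrational (so that the convergent sequence is infinite and the strict best-approximation inequality holds for all $0<q'<q_{k+1}$). That suffices for the paper's use, where $\theta$ ranges over inverse slopes of slits $(\lambda+m,n)$ with $\lambda$ irrational, but it is a hypothesis you are silently using.
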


\subsection{Liouville convergents}\label{ss:LC}
The next lemma shows that convergents of $\lambda$ with $q_{k+1}\gg q_k$ 
  give rise to convergents of $\frac{\lambda+m}{n}$.  
\begin{lemma}\label{lem:LC}
Let $w=(\lambda+m,n)$ be a slit and $\frac{p_k}{q_k}$ a convergent of $\lambda$ 
  such that 
\begin{equation}\label{sc:LC}
  |w| = n < \frac{q_{k+1}}{2q_k}.  
\end{equation}
Let $\frac{p}{q}$ denote the fraction $\frac{p_k+mq_k}{nq_k}$ in lowest terms.  
Then $\frac{p}{q}$ is a convergent of $\frac{\lambda+m}{n}$ and 
  its height satisfies $q_k\le q\le|w|q_k$.  
Furthermore, the height $q'$ of the next convergent of $\frac{\lambda+m}{n}$ 
  is larger than $\frac{q_{k+1}}{2}$.  
\end{lemma}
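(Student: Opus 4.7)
The plan is to reduce everything to the two classical continued-fraction estimates (Theorems~\ref{thm:CF1} and~\ref{thm:CF2}). The central observation is that $\tfrac{p_k+mq_k}{nq_k}$ approximates $\tfrac{\lambda+m}{n}$ just as well as $\tfrac{p_k}{q_k}$ approximates $\lambda$, up to the factor $1/n$. The direct computation
\begin{equation*}
\frac{\lambda+m}{n} - \frac{p_k+mq_k}{nq_k} \;=\; \frac{q_k\lambda - p_k}{nq_k} \;=\; \frac{1}{n}\left(\lambda - \frac{p_k}{q_k}\right)
\end{equation*}
combined with the upper bound in Theorem~\ref{thm:CF1} gives
\begin{equation*}
\left|\frac{\lambda+m}{n} - \frac{p_k+mq_k}{nq_k}\right| \;\le\; \frac{1}{nq_kq_{k+1}}.
\end{equation*}

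Next I would pin down the height $q$ of the reduced fraction. Setting $d=\gcd(p_k+mq_k,\,nq_k)$ and using $\gcd(p_k,q_k)=1$, a short gcd argument (any prime dividing $q_k$ and $p_k+mq_k$ would also divide $p_k$) shows $\gcd(d,q_k)=1$, so $d\mid n$ and $q=nq_k/d$. This yields at once $q_k\mid q$ and $q_k\le q\le nq_k=|w|q_k$, which are the asserted height bounds. Plugging $q\le nq_k$ into the error estimate, the hypothesis $n<q_{k+1}/(2q_k)$ gives $2q^2\le 2n^2q_k^2<nq_kq_{k+1}$, so the error is strictly less than $1/(2q^2)$; Theorem~\ref{thm:CF2} then concludes that $p/q$ is a convergent of $\tfrac{\lambda+m}{n}$.

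For the lower bound on the next denominator $q'$, I would apply the left-hand side of Theorem~\ref{thm:CF1} to $\tfrac{\lambda+m}{n}$:
\begin{equation*}
\frac{1}{q(q+q')} \;<\; \left|\frac{\lambda+m}{n} - \frac{p}{q}\right| \;\le\; \frac{1}{nq_kq_{k+1}},
\end{equation*}
so $q(q+q')>nq_kq_{k+1}$. Since $q\le nq_k$, this forces $q+q'>q_{k+1}$, whence
\begin{equation*}
q' \;>\; q_{k+1}-q \;\ge\; q_{k+1}-nq_k \;>\; \tfrac{1}{2}q_{k+1},
\end{equation*}
the final inequality using the hypothesis once more. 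The proof is really just a chain of inequalities, so I do not anticipate a genuine obstacle; the one step that needs care is the gcd argument in the second paragraph, since the conclusion that $q$ is a multiple of $q_k$ with $q\le nq_k$ is precisely what lets the hypothesis $n<q_{k+1}/(2q_k)$ close both the convergent test and the estimate on $q'$.
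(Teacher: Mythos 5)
Your proposal is correct and follows essentially the same route as the paper: the identity $\tfrac{\lambda+m}{n}-\tfrac{p_k+mq_k}{nq_k}=\tfrac1n\bigl(\lambda-\tfrac{p_k}{q_k}\bigr)$ combined with Theorems~\ref{thm:CF1} and~\ref{thm:CF2}, the gcd argument to bound $q$, and the left-hand side of Theorem~\ref{thm:CF1} for $q'$. The only cosmetic difference is in the last step, where you keep $\tfrac{1}{q(q+q')}$ and argue additively ($q'>q_{k+1}-q$) while the paper relaxes it to $\tfrac{1}{2qq'}$ and argues multiplicatively; both yield $q'>q_{k+1}/2$.
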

\begin{proof}
Using the right hand side of (\ref{ieq:CF1}) and (\ref{sc:LC}) we get 
\begin{equation}\label{ieq:LC}
  \left|\frac{\lambda+m}{n} - \frac{p_k+mq_k}{nq_k}\right| < \frac{1}{|w|q_kq_{k+1}} 
     < \frac{1}{2n^2q_k^2}
\end{equation}
  which implies that $\frac{p}{q}$ is a convergent of $\frac{\lambda+m}{n}$.  
Clearly, $q\le|n|q_k=|w|q_k$ and since $\gcd(p_k,q_k)=1$, $n$ is divisible by 
  $\gcd(p_k+mq_k,nq_k)$ so that $q\ge q_k$.  
Let $q'$ be the height of the next convergent of $\frac{\lambda+m}{n}$.  
From the first inequalities in (\ref{ieq:CF1}) and in (\ref{ieq:LC}) we get 
  $$\frac{1}{2qq'} < \left|\frac{\lambda+m}{n}-\frac{p}{q}\right| < \frac{1}{|w|q_kq_{k+1}}$$  
  so that $$q'>\frac{|w|q_kq_{k+1}}{2q}\ge\frac{q_{k+1}}{2}.$$  
\end{proof}

\begin{definition}
When the conclusion of Lemma~\ref{lem:LC} holds, we refer to $\frac{p}{q}$ (or 
  the vector $v=(p,q)$) as the \emph{Liouville convergent} of $w$ indexed by $k$.  
(We shall often blur the distinction between the rational $\frac{p}{q}$ and the vector $v$.)  
\end{definition}

The terminology of Liouville convergent is justified by the sequel both in the 
dimension $0$ result and in the dimension $1/2$ result.  In the next lemma 
we show that if $w',w$ have their lengths in a range defined by the convergents 
of $\lambda$ and are related by a twist about a loop $v$, then if $v$ is not the 
Liouville convergent of $w$, the area interchange determined by $w,w'$ will be large.  
If $v$ is the Liouville convergent, then the next slit after $w'$ will not be in the range.  
The summability condition on area exchanges will then imply that there cannot be too 
many slit lengths in the Liouville part of $\lambda$ (in the range where $q_{k+1}/q_k$ 
is large).  Consequently the lengths of the slits must grow quickly and we can find 
covers of the nonergodic set that allow us to prove Hausdorff dimension $0$ using 
Lemma~\ref{lem:E_r}.  In \S\ref{s:Liouville} we will use Liouville convergents to build 
new children slits out of parent slits.

\begin{lemma}\label{lem:min:area}
Let $w,w'$ be slits such that $w,w'$ are related by a Dehn twist about $v\in V_0$ and 
  $|w\times v|<\tfrac12$.   
Suppose further that $|w|<|w'|<\frac{q_{k+1}}{2q_k}$ and let $u$ be the Liouville 
  convergent of $w$ indexed by $k$.  Regarding $u$ as a vector, then either 
\begin{enumerate}
\item[(i)]  $v\neq u$ and 
\begin{equation}\label{ieq:min:area}
  |w\times v|  > \frac{1}{2q_k}, 
\end{equation}
  or 
\item[(ii)]  $v=u$ and for any $v'\in\Z^2\setminus\Z v$ satisfying 
  $|w'\times v'|<\tfrac12$ we have $|v'|>\frac{q_{k+1}}{4}$.  
\end{enumerate}
\end{lemma}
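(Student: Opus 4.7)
The plan is to exploit the approximation $\lambda = p_k/q_k + \eps_k$ with $|\eps_k| < 1/(q_k q_{k+1})$ (from Theorem~\ref{thm:CF1}) to convert cross products into integer expressions plus small error terms. For any integer vector $v = (a,b)$ and slit $w = (\lambda+m,n)$, a direct expansion gives
\[ w \times v = \frac{(p_k + m q_k)\,b - n\,a\, q_k}{q_k} + \eps_k b. \]
The main term is an integer divided by $q_k$, and it vanishes precisely when $a/b = (p_k+mq_k)/(nq_k)$ in lowest terms, i.e., when $v$ is an integer multiple of the Liouville convergent $u = (p,q)$. This single identity drives both parts of the lemma.

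For part (i), $v$ is a primitive loop with $v \neq u$, so $v \notin \Z u$, and the integer numerator above is nonzero; hence $|w\times v| \geq 1/q_k - |\eps_k||v|$. To bound $|v|$, I use the Dehn twist relation $w' = w + c v$ for some integer $c \neq 0$: the condition $|w'| > |w|$ forces $|w'|-|w| = |c||v|$, so $|v| \leq |w'| - |w| < q_{k+1}/(2q_k)$. Substituting, $|\eps_k||v| < 1/(2q_k^2) \leq 1/(2q_k)$, which yields $|w\times v| > 1/(2q_k)$.

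For part (ii), $v = u$. The first step is to establish the clean decomposition
\[ w = (n/q)\,u + (\eps_k, 0), \qquad w' = (n'/q)\,u + (\eps_k, 0), \]
where $n' = n + cq$. This follows directly from the identity $np/q = p_k/q_k + m$ (a restatement of $p/q = (p_k + mq_k)/(nq_k)$) and from $w' = w + cu$. Taking the cross product with any $v' = (p'',q'')$ then gives
\[ w' \times v' = \tfrac{n'}{q}(u \times v') + \eps_k q''. \]
Since $u \in V_0$ is primitive and $v' \notin \Z u$, the integer $|u \times v'|$ is at least $1$; and since $c \geq 1$ (from $|w'|>|w|$ and $q > 0$), $n'/q > 1$. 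The reverse triangle inequality then gives $|w'\times v'| > 1 - |v'|/(q_k q_{k+1})$, and the hypothesis $|w'\times v'| < 1/2$ forces $|v'| > q_k q_{k+1}/2 \geq q_{k+1}/2 > q_{k+1}/4$.

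The main technical step is the algebraic decomposition $w = (n/q)u + (\eps_k, 0)$, which reveals that $w$ is a tiny horizontal perturbation (of size $|\eps_k|$) of a scalar multiple of its Liouville convergent. Once this identity is secured, both parts reduce to the elementary observation that $|u \times v'| \geq 1$ for any integer vector $v' \notin \Z u$ when $u$ is primitive, combined with the a priori bound on $|v|$ or $|v'|$ relative to $q_k, q_{k+1}$.
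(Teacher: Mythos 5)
Your proof is correct, and it takes a genuinely different route from the paper's. The paper's argument runs through continued-fraction machinery: it shows via Theorem~\ref{thm:CF2} that the Dehn-twist direction $v$ is a convergent of $\alpha'$ (the inverse slope of $w'$), then uses Theorem~\ref{thm:CF1} to trap the height $q'$ of the next convergent between $\tfrac{|w'|}{2|w\times v|}$ and $\tfrac{|w'|}{|w\times v|}$, and finally invokes Lemma~\ref{lem:LC} to locate the Liouville convergent within the convergent sequence of $\alpha'$ and deduce the two cases from the position of $v$ relative to it. You instead work directly from the exact identity $\lambda=p_k/q_k+\eps_k$, which immediately yields the decomposition $w=\tfrac{n}{q}u+(\eps_k,0)$, exhibiting $w$ (and $w'$) as tiny horizontal perturbations of scalar multiples of $u$. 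The cross products then split into an integer main term (a multiple of $d\,(u\times v)/q_k$ in part~(i), of $u\times v'$ in part~(ii)) plus an error of size $O(|\eps_k|)$, and both conclusions are squeezed out by comparing the main term with the error. This bypasses Theorems~\ref{thm:CF1}--\ref{thm:CF2} and Lemma~\ref{lem:LC} entirely; it is more elementary and, in part~(ii), yields the sharper bound $|v'|>\tfrac{q_kq_{k+1}}{2}$. Note also that you never need the hypothesis $|w\times v|<\tfrac12$, which the paper uses to certify that $v$ is a convergent of $\alpha'$.

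One small point you should make explicit: the step ``$v\neq u$ so $v\notin\Z u$'' silently uses that $v$ is taken with positive height (so that $v=-u$ is excluded). Since the Dehn-twist direction is only defined up to sign and the paper's conventions (the Liouville convergent and the $Z$-expansion vectors all have positive $y$-coordinate) make this the natural normalization, the assumption is harmless, but it is worth a sentence. Apart from that normalization remark, every step checks out, including the edge case $q_k=1$, where the strict inequality $|\eps_k|\,|v|<\tfrac{1}{2q_k^2}$ (strict because $|v|<\tfrac{q_{k+1}}{2q_k}$ strictly) still delivers $|w\times v|>\tfrac{1}{2q_k}$.
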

\begin{proof}
We have $w'=w+bv$ for some nonzero, even integer $b$, so that 
\begin{equation}\label{ieq:v<w'}
  |v|=\frac{|w'-w|}{|b|}\le\frac{|w'|+|w|}{2}<|w'|.  
\end{equation}
Let $\alpha'$ be the inverse slope of $w'$.  Let $v=(p,q)$. Then 
  $$\left|\alpha'-\frac{p}{q}\right| = \frac{|w'\times v|}{|w'||v|} 
      < \frac{|w\times v|}{|v|^2} < \frac{1}{2q^2}$$
  so that $\frac{p}{q}$ is a convergent of $\alpha'$, by (\ref{ieq:CF2}).  
Let $q'$ be the height of the next convergent of $\alpha'$.  
Then (\ref{ieq:CF1}) implies 
  $$\frac{1}{2qq'}<\left|\alpha'-\frac{p}{q}\right| < \frac{1}{qq'}$$ 
  so that 
  $$\frac{|w'|}{2|w\times v|} = \frac{|w'||v|}{2q|w'\times v|} < q' < \frac{|w'|}{|w\times v|}.$$  
The Liouville convergent $u=(m,n)$ cannot have its height $n<q$ because 
  Lemma~\ref{lem:LC} implies the height $n'$ of the next convergent of 
  $\alpha'$ is greater than $\frac{q_{k+1}}{2}>|w'|>|v|=q$, contradicting 
  the fact that $q$ is the height of a convergent of $\alpha'$, namely 
  $\frac{p}{q}$.  
Thus, $|u|\ge|v|$.  

In case (i), $|u|>|v|$ so that $|u|\ge q'>\frac{|w'|}{2|w\times v|}$.  
Since $|u|\le|w'|q_k$, the inequality (\ref{ieq:min:area}) follows.  

In case (ii), we have $q'=n'>\frac{q_{k+1}}{2}$, as noted earlier.  
Given $v'\in\Z^2\setminus\Z v$, we have 
\begin{align*}
  1\le|u\times v'|&=|u||v'|\left(\angle w'u + \frac{|w'\times v'|}{|w'||v'|}\right) \\
      &\le \frac{|v'|}{q'} + \frac{|v|}{2|w'|} < \frac{|v'|}{q'}+\frac{1}{2} 
\end{align*}
  from which it follows that $|v'|>\frac{q'}{2}>\frac{q_{k+1}}{4}$.  
\end{proof}

The Hausdorff dimension $0$ result now follows from 
\begin{lemma}\label{lem:NE->LV}
Assume $$\sum_k \frac{\log\log q_{k+1}}{q_k} = \infty$$ holds.  
Then any minimal $\theta\in\NE(P_\lambda)$ is Liouville relative to $Z$.  
\end{lemma}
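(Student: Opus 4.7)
The plan is to argue by contradiction: assume $\theta \in \NE(P_\lambda)$ is minimal yet \emph{not} Liouville relative to $Z$, and derive a contradiction with the hypothesis~(\ref{PM:div}). By definition, there would then exist an integer $N \geq 2$ and a constant $C$ such that successive $Z$-convergents $\eta_j$ of $\theta$ satisfy $|\eta_{j+1}| \leq C|\eta_j|^N$. By Theorem~\ref{thm:CE}, the tail of the $Z$-expansion alternates between loops and separating slits $\ldots, v_{j-1}, w_j, v_j, w_{j+1}, \ldots$ with the summability $\sum_j |w_j \times v_j| < \infty$. The strategy is to bound this convergent series from below by a divergent one built from the Liouville behavior of $\lambda$.

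I would fix a large constant $M = M(N)$ and call an index $k$ \emph{Liouville} if $q_{k+1} > q_k^M$. First I would verify that restricting the divergent series in~(\ref{PM:div}) to Liouville $k$ still leaves it divergent: for non-Liouville $k$ we have $\log \log q_{k+1} \leq \log \log q_k + \log M$, and since the $q_k$ grow at least exponentially in $k$ the tail $\sum_k \log \log q_k/q_k$ is convergent, so the non-Liouville contribution to~(\ref{PM:div}) is finite.

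For each Liouville $k$, consider the height window $I_k = [q_k, q_{k+1}/(2q_k)]$. The polynomial growth bound $|\eta_{j+1}| \leq C|\eta_j|^N$ forces at least $c \log\log q_{k+1}/\log N$ $Z$-convergents to have heights in $I_k$, for an absolute $c>0$, provided $M$ is large enough. Since the expansion eventually alternates between loops and slits, roughly half of these are slits, and all but the last have the property that both $w_j$ and $w_{j+1}$ lie in $I_k$. For each such slit, and for $j$ large enough that $|w_j \times v_j| < 1/2$ (automatic by summability), Lemma~\ref{lem:min:area} applies with $w=w_j$, $w'=w_{j+1}$, $v=v_j$. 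Case~(i) yields $|w_j \times v_j| > 1/(2q_k)$, while case~(ii) would force $|v_{j+1}| > q_{k+1}/4$, ejecting the next convergent from $I_k$; hence case~(ii) occurs for at most one $j$ per Liouville $k$. Summing the lower bound $1/(2q_k)$ over the remaining slits in $I_k$ and then over Liouville $k$,
\[
\sum_j |w_j \times v_j| \;\geq\; c' \sum_{k \text{ Liouville}} \frac{\log\log q_{k+1}}{q_k} \;=\; \infty,
\]
contradicting~(\ref{ieq:sumx}).

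The main obstacle I anticipate is calibrating $M$ and the window $I_k$ so that the counting step genuinely delivers $\gtrsim \log\log q_{k+1}$ slits in each Liouville window. One must check that polynomial growth really forces the advertised lower bound on the number of $Z$-convergents in $I_k$, that enough of them are slits whose successor $w_{j+1}$ also lies in $I_k$, and that when case~(ii) of Lemma~\ref{lem:min:area} triggers we lose only one slit from the count rather than aborting the entire block. Once these combinatorial details are in place, the final summation is a clean telescoping of the lower bounds furnished by Lemma~\ref{lem:min:area}.
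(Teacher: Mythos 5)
Your proposal follows essentially the same route as the paper: argue by contradiction from the Diophantine assumption, restrict to large gaps $q_{k+1}>q_k^M$, count $Z$-convergents whose heights land in a window roughly $[q_k,\,q_{k+1}/(2q_k)]$, rule out case~(ii) of Lemma~\ref{lem:min:area} via the bound $|v_{j+1}|>q_{k+1}/4$, and sum the resulting $1/(2q_k)$ lower bounds. The one point you flag as needing care is indeed where the claim is slightly off: the polynomial growth bound gives $\gtrsim (\log\log q_{k+1}-\log\log q_k)/\log N$ convergents in the window, not $\gtrsim \log\log q_{k+1}/\log N$, and for a fixed threshold $M$ these are genuinely not comparable when $\log\log q_k\to\infty$. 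The paper's fix (which your outline is compatible with) is to sum the bound $(\log\log q_{k+1}-\log\log q_k)/q_k$ directly and observe that $\sum_k \log\log q_k/q_k<\infty$ because $q_k$ grows at least exponentially, so the full series still diverges.
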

\begin{proof}
Let $n_k>1$ be defined by $q_{k+1}=q_k^{n_k}$; in other words, 
  $$n_k = \log_{q_k}q_{k+1} = \frac{\log q_{k+1}}{\log q_k}.$$  
Note that since $q_k$ grows exponentially, we have 
  $$\sum_{n_k\le N} \frac{\log\log q_{k+1}}{q_k} 
      \le \sum_{n_k\le N} \frac{\log N+\log\log q_k}{q_k} < \infty$$
  for any $N>0$.  
Hence, (\ref{PM:div}) implies $n_k$ is unbounded; moreover, the series 
  in (\ref{PM:div}) diverges even if we restrict to terms with $n_k>N$.  

Let $\theta\in\NE(P_\lambda)$ be a minimal direction for the flow.  
Then it is minimal relative to $Z$ and by Theorem~\ref{thm:CE} 
  its $Z$-expansion eventually alternates $\dots,w_j,v_j,w_{j+1},\dots$ 
  between (separating) slits and loops such that (\ref{ieq:sumx}) holds.  
Let $J_k$ be the collection of indices $j$ such that 
  $$q_k \le |w_j| < |w_{j+1}| < |w_{j+2}|<q_k^{n_k-2} < \frac{q_{k+1}}{4q_k}.$$  
For any $j\in J_k$ we wish to prove that conclusion (i) of Lemma~\ref{lem:min:area} holds.  
Suppose by way of contradiction conclusion (ii) holds so that 
  $v_j$ is  the Liouville convergent of $w_j$ indexed by $k$.  
Setting $v'=v_{j+1}$ by conclusion (ii) we have $|w_{j+2}|>|v_{j+1}|>\frac{q_{k+1}}{4}$, 
  a contradiction.  Thus (i) holds and therefore $|w_j\times v_j|>\frac{1}{2q_k}$.  

Suppose $\theta$ is Diophantine relative to $Z$.  
Then there exists $N$ such that $|w_{j+1}|<|w_j|^N$ for all $j$.  
Hence, $|w_j|<|w_0|^{N^j}$ and since 
  $$\log_N\log_{|w_0|}q^a=\frac{1}{\log N}(\log a+\log\log q-\log|w_0|)$$ 
  we see that the number of $j$ such that $|w_j|$ lies in an interval of 
  the form $[q^a,q^b]$ is at least $\lfloor\log_N(b/a)\rfloor$.  
It follows that the number of elements in $J_k$ is at least 
  $$\log_N(n_k-2)-3 > \frac{\log n_k}{2\log N} 
         = \frac{\log\log q_{k+1}-\log\log q_k}{2\log N}$$ 
  provided $n_k>N_0$ for some $N_0$ depending only on $N$.  
Since $\sum\frac{\log\log q_k}{q_k}<\infty$ (as heights of convergents 
  grow exponentially) we have 
  $$\sum_{n_k>N_0}\sum_{j\in J_k} |w_j\times v_j| > 
              \sum_{n_k>N'}\frac{\log\log q_{k+1}-\log\log q_k}{2(\log N)q_k} = \infty$$ 
  which contradicts (\ref{ieq:sumx}).  
Hence, $\theta$ must be Liouville relative to $Z$, proving the lemma.  
\end{proof}

\section{Cantor set construction}\label{s:Cantor}
We begin the proof of the Hausdorff dimension $1/2$ result.  
To construct nonergodic directions, we use Theorem~\ref{thm:sumx}.  
The general idea is as follows.  
Starting with an initial slit $w_0$ we will construct a tree of slits.  
At level $j$ we will have a collection of slits of approximately 
  the same length.  
For each $w$ in this collection we wish to construct new slits 
  of level $j+1$ each having small cross-product with $w$.  
Depending on the relationship of the length of $w$ to the 
  continued fraction expansion of $\lambda$, as specified 
  precisely in \S\ref{s:Init}, the construction will be one of two types 
  that will be explained in \S\ref{s:Liouville} and \S\ref{s:Diophantine}.  

In this section, we associate to this tree of slits a Cantor set.  
For each $j$ we will define a set $F_j$ which is a disjoint union of intervals.  
The directions of each slit of level $j$ will lie in some interval in $F_j$ 
  and the intervals at level $j$ will be separated by gaps.  
The intervals of level $j+1$ will be nested in the intervals of level $j$.  
Each nonergodic direction corresponds to a nested intersection of these intervals.  

We shall assume the tree of slits satisfy certain assumptions, 
  to be verified later in \S\ref{s:Tree} and \S\ref{s:Lower}.  
These assumptions, expressed in terms of parameters $r>1$, $\delta_j>0$ 
  and $\rho_j>0$, ensure that certain lower bounds on the Hausdorff dimension 
  of the Cantor set will hold.  

\subsection{Local Hausdorff dimensions}\label{ss:Hdim}
To establish lower bounds for Hausdorff dimension we will use an 
  estimate of Falconer \cite{Fa} which we explain next.  
Let $$F=\bigcap_{j\ge0} F_j$$ where each $F_j$ is a finite disjoint union 
  of closed intervals and $F_{j+1}\subset F_j$ for all $j$.  
Suppose there are sequences $m_j\ge2$ and $\eps_j\searrow0$ 
  such that each interval of $F_j$ contains at least $m_j$ intervals 
  of $F_{j+1}$ and the smallest gap between any two intervals 
  of $F_{j+1}$ is at least $\eps_j$.  
(Note that $m_j\ge2$ implies there will always be at least one gap.)  
Then Falconer's lower bound estimate is 
\begin{equation*}\label{eq:Falc}
\Hdim F \ge \liminf_j \frac{\log (m_0\cdots m_j)}{-\log m_{j+1}\eps_{j+1}}.  
\end{equation*} 
If $\lim_{j\to\infty}m_j\eps_j=0$, as is necessarily the case if the 
  length of the longest interval in $F_j$ tends to zero as $j\to\infty$, 
  then $$\Hdim F \ge \liminf_j d_j$$ where 
\begin{equation}\label{def:d_j}
  d_j:=\frac{\log m_j}{-\log \frac{m_{j+1}\eps_{j+1}}{m_j\eps_j}}.  
\end{equation}
Our goal is that for each $\eps>0$, we make a construction of a 
  Cantor set of nonergodic directions so that each $d_j$ will satisfy 
  \begin{equation*}\label{eq:lowerd_j}
d_j > \frac{1}{2}-\eps.
\end{equation*}

\subsection{The parameters $r$, $\delta_j$, and $\rho_j$}
Given $r>1$ and a sequence of positive $\delta_j\to 0$ (which will 
  measure the area interchange defined by consecutive slits), we 
  shall construct a Cantor set $F$ depending on parameters $m_j$ 
  and $\eps_j$ that are expressible in terms of $r$ and $\delta_j$.  
It is based on the assumption, verified later, that we can construct 
  a tree of slits.  We start with an initial slit $w_0$, the unique 
  slit of level $0$.  Inductively, given a slit $w_j$ of level $j$ 
  we consider slits of the form $w_j+2v_j$ where $v_j\in\Z^2$ is a 
  primitive vector, i.e. $\gcd(v_j)=1$, and satisfies 
\begin{equation*}\label{def:children}
  |w_j\times v_j|<\delta_j, \quad |w_j|^r \le |v_j| \le 2|w_j|^r.  
\end{equation*}
We refer to $w_{j+1}=w_j+2v_j$ of the above form as a \emph{child} of $w_j$.  
It satisfies 
\begin{equation}\label{ieq:|w_{j+1}|}
  |w_j|^r\leq |w_{j+1}|\leq 5|w_j|^r.  
\end{equation}
The main difficulty in the construction is avoiding slits that have 
  no children at all.  To ensure that we can avoid such slits, we 
  shall only use children with ``nice Diophantine properties'' 
  when we assemble the slits for the next level.  
However, we shall ensure that at each stage, the number of 
  children (of a parent slit $w$) used will be at least 
\begin{equation}\label{enough}
  \rho_j|w|^{r-1}\delta_j  
\end{equation}
  where $\rho_j$ is to be determined later.  

For $w$ a slit, let $I(w)$ denote the interval of length 
  $$\diam I(w) = \frac{4}{|w|^{r+1}}$$ centered about the 
  inverse slope of the direction of $w$.  
The following lemma allows us to find estimates for the 
  sizes of intervals and the gaps between them.  
\begin{lemma}\label{lem:gaps}
Assume $|w_0|^{r(r-1)}\ge64$ and $\delta_j<\frac{1}{16}$.  
Let $w_{j+1}$ be a child of a slit $w_j$ of level $j$.  Then 
\begin{itemize}
\item $I(w_{j+1})\subset I(w_j)$, and 
\item if $w'_{j+1}$ is another child of $w_j$, then 
  $$\dist(I(w_{j+1}),I(w'_{j+1})) \ge \frac{1}{16|w_j|^{2r}}.$$  
\end{itemize}
\end{lemma}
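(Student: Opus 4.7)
Both assertions reduce to inequalities on inverse-slope distances between pairs of slit directions, and I would evaluate these via the angle formula $\angle uv = |u\times v|/(|u||v|)$ in combination with the defining bounds $|w_j\times v_j|<\delta_j$ and $|w_j|^r\le|v_j|\le 2|w_j|^r$ for a child, together with (\ref{ieq:|w_{j+1}|}). Two preliminary observations drive the constants: the hypothesis $|w_0|^{r(r-1)}\ge 64$ forces $|w_0|>1$, and since $r>1$ gives $r^2-1>r(r-1)$, one has $|w_j|^{r^2-1}\ge 64$ throughout the tree.

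\textbf{Containment.} Since $w_{j+1}-w_j=2v_j$, the identity $w_j\times w_{j+1}=2(w_j\times v_j)$ immediately gives
$$\angle w_jw_{j+1}<\frac{2\delta_j}{|w_j|^{r+1}}.$$
The containment $I(w_{j+1})\subset I(w_j)$ is equivalent to the distance between centers being at most half the difference of diameters, i.e.\ to
$$\frac{2\delta_j}{|w_j|^{r+1}}\le\frac{2}{|w_j|^{r+1}}-\frac{2}{|w_{j+1}|^{r+1}}.$$
Using $|w_{j+1}|\ge|w_j|^r$, this collapses to $\delta_j\le 1-|w_j|^{-(r^2-1)}$, comfortably satisfied by $\delta_j<1/16$ and $|w_j|^{r^2-1}\ge 64$.

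\textbf{Gap.} For two distinct children $w_{j+1}=w_j+2v_j$ and $w'_{j+1}=w_j+2v'_j$ the key identity is
$$w_{j+1}\times(v'_j-v_j)=(w_j\times v'_j)-(w_j\times v_j)+2(v_j\times v'_j).$$
Because $v_j$ is a primitive integer vector chosen with positive $y$-coordinate (so distinct $v_j,v'_j$ cannot be parallel), $|v_j\times v'_j|\ge 1$. Combined with $|w_j\times v_j|,|w_j\times v'_j|<\delta_j$, this yields
$$|w_{j+1}\times w'_{j+1}|=2|w_{j+1}\times(v'_j-v_j)|\ge 4-4\delta_j>\tfrac{15}{4}.$$
Dividing by $|w_{j+1}||w'_{j+1}|\le 25|w_j|^{2r}$ gives $\angle w_{j+1}w'_{j+1}\ge \frac{3}{20|w_j|^{2r}}$. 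Subtracting the two half-diameters, whose total is at most $\frac{4}{|w_j|^{r(r+1)}}\le\frac{1}{16|w_j|^{2r}}$ (again using $|w_j|^{r(r-1)}\ge 64$), leaves
$$\dist\bigl(I(w_{j+1}),I(w'_{j+1})\bigr)\ \ge\ \frac{3}{20|w_j|^{2r}}-\frac{1}{16|w_j|^{2r}}\ =\ \frac{7}{80|w_j|^{2r}}\ >\ \frac{1}{16|w_j|^{2r}}.$$

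\textbf{Main obstacle.} Neither step involves deep computation; the only delicate point is the parallel case in the gap argument. Were $v'_j=-v_j$ admitted as a distinct child, $|v_j\times v'_j|$ would vanish and the estimate would collapse. I am therefore implicitly relying on the convention, to be built into the tree construction, that $v_j$'s are normalized to have positive $y$-coordinate, so that opposite primitives do not simultaneously appear as children. With this convention in place, the thresholds $|w_0|^{r(r-1)}\ge 64$ and $\delta_j<1/16$ are tuned precisely so that the clean constant $1/16$ appearing in the statement is comfortably achieved.
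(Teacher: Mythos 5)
Your proof is correct and follows essentially the same approach as the paper: both arguments compare inverse-slope distances via the cross-product/angle formalism, using $|w_j\times v_j|<\delta_j$, $|w_j|^r\le|v_j|\le2|w_j|^r$, $|v_j\times v'_j|\ge1$, and the size hypotheses on $|w_0|$ and $\delta_j$ to obtain the constants. The only cosmetic difference is that you expand $w_{j+1}\times(v'_j-v_j)$ directly to lower-bound $\angle w_{j+1}w'_{j+1}$, whereas the paper reaches the same estimate via the triangle inequality $\angle w_{j+1}w'_{j+1}\ge\angle v_jv'_j-\angle w_{j+1}v_j-\angle w'_{j+1}v'_j$; and the "opposite primitives" caveat you flag is indeed implicit in the paper's constructions, where the $v_j$'s always have positive $y$-coordinate.
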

\begin{proof}
Since the distance between the directions of $w_j$ and $w_{j+1}$ is 
  $$\angle w_jw_{j+1} = \frac{|w_j\times w_{j+1}|}{|w_j||w_{j+1}|} 
              \le \frac{|w_j\times v_j|}{|w_j||v_j|} < \frac{1}{|w_j|^{r+1}}$$ 
  the first conclusion follows from 
  $$\frac{1}{|w_j|^{r+1}} + \frac{2}{|w_j|^{r(r+1)}} \le \frac{2}{|w_j|^{r+1}}$$ 
  which holds easily by the assumption on $|w_0|$.  

The distance between the directions of $w_{j+1}$ and $v_j$ is 
  $$\angle w_{j+1}v_j = \frac{|w_{j+1}\times v_j|}{|w_{j+1}||v_j|} 
       \le \frac{|w_j\times v_j|}{|w_j|^{2r}} < \frac{\delta_j}{|w_j|^{2r}}.$$  
If $w'_{j+1}=w_j+2v_j'$ is another child of $w_j$ then 
  $$\angle v_jv_j'=\frac{|v_j\times v_j'|}{|v_j||v_j'|} \ge \frac{1}{4|w_j|^{2r}}$$ 
  so that by the triangle inequality, 
  $$\angle w_{j+1}w'_{j+1} \ge \frac{1}{4|w_j|^{2r}} 
       - \frac{\delta_j+\delta_{j+1}}{|w_j|^{2r}} \ge \frac{1}{8|w_j|^{2r}}$$ 
  since $\sup\delta_j<\frac{1}{16}$.  
Therefore, $$\dist(I(w_{j+1}),I(w'_{j+1})) \ge \frac{1}{8|w_j|^{2r}} 
       - \frac{4}{|w_j|^{r(r+1)}} \ge \frac{1}{16|w_j|^{2r}}$$ 
  since $|w_0|^{r(r-1)}\ge64$.  
\end{proof}

Let $$F_j=\bigcup_w I(w)$$ where the union is taken over all 
  slits of level $j$.  
From (\ref{ieq:|w_{j+1}|}) we have 
\begin{equation}\label{ieq:|w_j|}
  |w_0|^{r^j} \le |w_j| \le 5^{\frac{r^j-1}{r-1}}|w_0|^{r^j},
\end{equation}
  so that the number of children given by (\ref{enough}) is at least 
\begin{equation}\label{def:m_j}
  m_j := \rho_j\delta_j|w_0|^{r^j(r-1)} 
\end{equation}
  while the smallest gap between the associated intervals is at least 
\begin{equation*}\label{def:eps_j}
   \eps_j := \frac{1}{16\cdot5^{2r\frac{r^j-1}{r-1}}|w_0|^{2r^{j+1}}}, 
\end{equation*}
  by Lemma~\ref{lem:gaps}.  

Now we express $d_j$, given by (\ref{def:d_j}), in terms of $r,\delta_j$ 
  and $\rho_j$.  We have 
  $$m_j\eps_j = \frac{\rho_j\delta_j}{16\cdot5^{2r\frac{r^j-1}{r-1}}|w_0|^{r^j(r+1)}}$$ 
  so that 
  $$\frac{m_{j+1}\eps_{j+1}}{m_j\eps_j} = 
        \frac{\rho_{j+1}\delta_{j+1}/\rho_j\delta_j}{5^{2r^{j+1}}|w_0|^{r^j(r^2-1)}}$$ 
  giving 
\begin{align}
\notag
  d_j &= \frac{r^j(r-1)\log|w_0| + \log(\rho_j\delta_j)}
               {r^j(r^2-1)\log |w_0| + 2r^{j+1}\log5 -\log(\rho_{j+1}\delta_{j+1}/\rho_j\delta_j)}\\
\label{formula:d_j}
      &= \frac{1 - \frac{-\log(\rho_j\delta_j)}{r^j(r-1)\log|w_0|}}
              {1+r + \frac{2r\log5}{(r-1)\log|w_0|} 
                   + \frac{\log(\rho_j\delta_j/\rho_{j+1}\delta_{j+1})}{r^j(r-1)\log|w_0|}}.  
\end{align}

Now making $d_j$ close to $\tfrac12$ will mean making $r$ close to $1$ 
  and making the terms 
\begin{equation}\label{eq:num}
  \frac{-\log(\rho_j\delta_j)}{r^j(r-1)\log|w_0|}
\end{equation}
  and 
\begin{equation}\label{eq:den}
  \frac{2r\log5}{(r-1)\log|w_0|} + 
  \frac{\log(\rho_j\delta_j/\rho_{j+1}\delta_{j+1})}{r^j(r-1)\log|w_0|}
\end{equation}
  small. 
Notice that if $\rho_j$ and $\delta_j$ are constant sequences, then this is 
  easily accomplished by choosing $|w_0|$ large enough.  
In \S\ref{s:Tree} we shall show that $\delta_j$ and $\rho_j$ can be chosen 
  so that (\ref{enough}) is satisfied at each step of the construction.  
The conditions $\delta_j<\frac{1}{16}$, as required by Lemma~\ref{lem:gaps}, 
  and $m_j\ge2$, as required by Falconer's estimate, will be verified in \S\ref{s:Tree} 
  along with the fact that $|w_0|$ can be chosen large enough to ensure that 
  $d_j$ is close to $\tfrac12$.

\section{Liouville construction}\label{s:Liouville}
The slits of the next level will be constructed from the previous level using 
  one of two constructions.  
The first construction we call the \emph{Liouville construction} as it uses 
  the Liouville convergents of $\lambda$ directly to identify new slits.  
The second construction, introduced in \cite{Ch1}, is different.  
We call it the \emph{Diophantine construction}.  
It does not use directly the convergents of $\lambda$, but rather 
  employs a technique to count lattice points in certain strips.  

In this section, we begin with the Liouville construction as it is 
  perhaps the main one of the paper.  
The Diophantine construction will be explained in \S\ref{s:Diophantine}.  

Recall that the Liouville convergent of a slit $w=(\lambda+m,n)$ 
  indexed by $k$ is the vector $u\in\Z^2$ determined by 
\begin{equation}\label{Liouville:convergent}
  (p_k+mq_k,nq_k) = du, \quad \gcd(u)=1 
\end{equation}
  where $$d=d(w,k)=\gcd(p_k+mq_k,nq_k).$$  
Note that the height of the Liouville convergent satisfies $$d|u|=|w|q_k.$$  

Choose $\Tilde{u}\in\Z\times\Z_{>0}$ so that 
  $$|u\times\Tilde{u}|=1 \quad\text{ and }\quad |\Tilde{u}|\le|u|.$$  
Observe that there are exactly $2$ possibilites for $\Tilde{u}$.  

Let $$\Lambda_1(w,k)=\{w+2v : v = \Tilde{u}+au, a\in\Z_{>0}\}$$ consist 
  of children $w+2v$ such that $v$ forms a basis for $\Z^2$ together 
  with $u$, i.e. $\Z^2=\Z u+\Z v$.  
  
The next lemma gives a bound on the cross-product of a parent with a child, 
  which recall, is a necessary estimate in the construction of nonergodic directions. 
\begin{lemma}\label{lem:max:area}
If $w+2v\in\Lambda_1(w,k)$ for some $|v|<q_{k+1}$ then 
\begin{equation}\label{ieq:max:area}
  |w\times v| < \frac{2|w|}{|u|} = \frac{2d(w,k)}{q_k} 
\end{equation}
  where $u$ is the Liouville convergent of $w$ indexed by $k$.  
\end{lemma}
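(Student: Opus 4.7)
The plan is to exploit the defining identity $du=(p_k+mq_k,nq_k)$ of the Liouville convergent to split $w$ into a vector parallel to $u$ plus a small horizontal error term coming from the approximation of $\lambda$ by $p_k/q_k$. Concretely, I would first observe that $\frac{d}{q_k}u=(p_k/q_k+m,\,n)$, so
\begin{equation*}
  w=(\lambda+m,n)=\frac{d}{q_k}\,u+\epsilon,\qquad \epsilon:=\Bigl(\lambda-\tfrac{p_k}{q_k},\,0\Bigr).
\end{equation*}
This is the heart of the argument: it isolates the portion of $w$ that is exactly aligned with $u$ and measures the defect in terms of how close $p_k/q_k$ is to $\lambda$.

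Second, using bilinearity of the cross product, I would write
\begin{equation*}
  w\times v=\frac{d}{q_k}\,(u\times v)+\epsilon\times v.
\end{equation*}
Since $v=\Tilde u+au$ with $a\in\Z_{>0}$, we have $u\times v=u\times\Tilde u$, which has absolute value exactly $1$ by the choice of $\Tilde u$. Hence the first term contributes $d/q_k$. For the second term, the cross product with a vector having vanishing $y$--component is simply $\epsilon\times v=(\lambda-p_k/q_k)\,v_y$. By the upper half of Theorem~\ref{thm:CF1}, $|\lambda-p_k/q_k|<1/(q_kq_{k+1})$, and by hypothesis $|v|=v_y<q_{k+1}$ (note $v_y>0$ since $\Tilde u,u\in\Z\times\Z_{>0}$ and $a>0$), so $|\epsilon\times v|<1/q_k$.

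Finally, combining these by the triangle inequality gives
\begin{equation*}
  |w\times v|<\frac{d}{q_k}+\frac{1}{q_k}\le\frac{2d}{q_k}=\frac{2|w|}{|u|},
\end{equation*}
where the last step uses $d\ge 1$ (so $d+1\le 2d$) and the identity $|w|/|u|=n/(nq_k/d)=d/q_k$. Strictness is inherited from the strict Khinchin bound on $|\lambda-p_k/q_k|$. There is really no obstacle here: the entire lemma is a transparent algebraic manipulation once one sees the decomposition $w=\tfrac{d}{q_k}u+\epsilon$, and the only thing to watch is that we use the \emph{upper} bound from~(\ref{ieq:CF1}) (not the lower) and that the hypothesis $|v|<q_{k+1}$ is exactly the right strength to cancel the $1/(q_kq_{k+1})$ coming from the continued-fraction approximation.
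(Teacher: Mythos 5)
Your proof is correct, and it takes a genuinely different route from the paper. The paper argues entirely with angles in inverse-slope coordinates: it uses (\ref{ieq:LC}) to bound $\angle uw \le \frac{1}{|w|q_kq_{k+1}}$, uses $|u\times v|=1$ together with $|v|<q_{k+1}$ and $|u|\le|w|q_k$ to establish the reverse inequality $\angle uv > \frac{1}{|w|q_kq_{k+1}}$, concludes $\angle vw \le \angle uw + \angle uv < 2\angle uv$ by the triangle inequality, and converts back to cross products via $|w\times v| = |w||v|\angle vw < 2|w||v|\angle uv = \frac{2|w|}{|u|}$. Your approach avoids angles entirely by decomposing $w = \frac{d}{q_k}u + \epsilon$ with $\epsilon = (\lambda - p_k/q_k,\,0)$ and using bilinearity of the cross product directly: $|w\times v| \le \frac{d}{q_k}|u\times v| + |\epsilon\times v| < \frac{d}{q_k} + \frac{1}{q_k} \le \frac{2d}{q_k}$. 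The two decompositions of the bound are compatible (both hinge on $|u\times v|=1$ and $|v|<q_{k+1}$ paired with the Khinchin upper bound), but yours sidesteps the appeal to (\ref{ieq:LC}) and the comparison $\angle uw < \angle uv$, giving a more self-contained algebraic computation. The only minor nit: Theorem~\ref{thm:CF1} gives $|\lambda-p_k/q_k|\le\frac{1}{q_kq_{k+1}}$, not strict inequality per se, but strictness of your final bound is rescued either by irrationality of $\lambda$ or simply by the strict hypothesis $|v|<q_{k+1}$, so the conclusion stands.
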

\begin{proof}
From (\ref{ieq:LC}) we have $$\angle uw\le\frac{1}{|w|q_kq_{k+1}}.$$ 
Since $|v|<q_{k+1}$, $|u\times v|=1$ and $|u|\le|w|q_{k}$ we have 
  $$\angle uv = \frac{|u\times v|}{|u||v|} > \frac{1}{|u|q_{k+1}} 
       \ge \frac{1}{|w|q_kq_{k+1}}$$
  so that $\angle vw\le \angle uv+\angle uw<2\angle uv$.  
Therefore, 
  $$|w\times v| = |w||v|\angle vw < 2|w||v|\angle uv 
                = \frac{2|w|}{|u|}.$$  
\end{proof}

The next lemma expresses the key property of slits constructed 
  via the Liouville construction.  
Note that $d(w,k)$ measures  how far $\frac{p+mq}{nq}$ is from 
  being a reduced fraction; namely, it is the amount of cancellation 
  between the numerator and denominator.  
Since $\gcd(p,q)=1$ (and $n=|w|$), it is easy to see that $d(w,k)\le|w|$.  
It is quite surprising that whenever a new slit $w'$ is constructed 
  via the Liouville construction, we have $d(w',k)\le2$.  
\begin{lemma}\label{lem:gcd}
For any $w'\in\Lambda_1(w,k)$, we have $d(w',k)\le2$.  
Hence, if $|w'|<\frac{q_{k+1}}{2q_k}$, then the inverse slope of $w'$ 
  has a convergent whose height is either $q_k|w'|$ or $q_k|w'|/2$.  
\end{lemma}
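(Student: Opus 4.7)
The plan is to express the defining pair $(p_k+m'q_k,\,n'q_k)$ for $d(w',k)$ in the $\Z$-basis $(u,\tilde u)$, reducing the two-dimensional gcd to a one-dimensional one, and then to carry out a short parity argument.

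First I would write $w' = w + 2v$ with $v = \tilde u + au$, and compute
\[
  (p_k+m'q_k,\,n'q_k) = (p_k+mq_k,\,nq_k) + 2q_k v
                     = du + 2q_k(\tilde u + au)
                     = (d+2aq_k)\,u + 2q_k\,\tilde u.
\]
Since $|u\times \tilde u|=1$, the matrix with columns $u$ and $\tilde u$ has determinant $\pm 1$, so it preserves the gcd of the coordinates of any integer vector.  Hence
\[
  d(w',k) \;=\; \gcd\!\bigl(d+2aq_k,\ 2q_k\bigr).
\]

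Next I would show $\gcd(d,q_k)=1$.  Any common divisor of $d$ and $q_k$ would, via $d\mid p_k+mq_k$ and $d\mid nq_k$, divide $p_k+mq_k$ and $mq_k$, hence $p_k$; this would contradict $\gcd(p_k,q_k)=1$.  Consequently $\gcd(d+2aq_k,\,q_k)=\gcd(d,q_k)=1$, so every odd prime dividing $2q_k$ is coprime to $d+2aq_k$.  A split on the parity of $d$ then finishes the first claim: if $d$ is odd, then $d+2aq_k$ is odd and the gcd is $1$; if $d$ is even, then $q_k$ must be odd, and writing $d=2d'$ one gets $\gcd(d+2aq_k,\,2q_k)=2\gcd(d'+aq_k,q_k)=2\gcd(d',q_k)=2$.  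In either case $d(w',k)\le 2$.

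For the second assertion, the hypothesis $|w'|<q_{k+1}/(2q_k)$ is exactly condition (\ref{sc:LC}) for $w'$, so Lemma~\ref{lem:LC} says that the reduction of $(p_k+m'q_k)/(n'q_k)$ to lowest terms is a convergent of the inverse slope of $w'$.  Its denominator is $n'q_k/d(w',k)=|w'|q_k/d(w',k)$, which by the first part equals $|w'|q_k$ or $|w'|q_k/2$.  I do not expect a real obstacle here: the only insight required is to change basis to $(u,\tilde u)$, after which everything is forced by the coprimality $\gcd(p_k,q_k)=1$ and a one-line parity check.
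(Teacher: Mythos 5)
Your proof is correct and follows essentially the same route as the paper's: pass to the $(u,\tilde u)$-basis, use unimodularity to reduce to a one-dimensional gcd, observe $\gcd(d,q_k)=1$ from $\gcd(p_k,q_k)=1$, and conclude $d(w',k)=\gcd(d,2)\le 2$; the second claim then drops out of Lemma~\ref{lem:LC}. Your version is slightly more explicit than the paper's at the two places where it is worth being explicit (why the basis change preserves the gcd, and the parity case split), but the underlying argument is identical.
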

\begin{proof}
Let $w'=(\lambda+m',n')$ where $$(m',n')-(m,n)=w'-w=2v.$$  
Now $d'=d(w',k)$ is determined by $d'u'=(p_k+m'q_k,n'q_k)$ 
  for some primitive $u'\in\Z^2$.  
In terms of the basis given by $u$ and $\tilde{u}$ we have 
  $$d'u'=(p_k+mq_k,nq_k)+2q_k(\tilde{u}+au)=(2q_k)\tilde{u}+(2aq_k+d)u.$$  
Note that $d=\gcd(p_k+mq_k,nq_k)$ is not divisible by any 
  divisor of $q_k$, since $\gcd(p_k,q_k)=1$.  Therefore, 
  $$d' = \gcd(2aq_k+d,2q_k) = \gcd(d,2q_k) = \gcd(d,2) \le2.$$  
The second statement follows from Lemma~\ref{lem:LC}.  
\end{proof}

Given $r>1$ we let 
  $$\Lambda(w,k)=\{w+2v\in\Lambda_1(w,k) : |w|^r \le |v| \le 2|w|^r \}.$$  
The next lemma gives a lower bound for the number of children constructed in the Liouville construction. 
\begin{lemma}\label{lem:liouville}
If $|w|^{r-1}\ge q_k$ then 
\begin{equation}\label{ieq:liouville}
  \#\Lambda(w,k) \ge \frac{|w|^{r-1}}{q_k}.  
\end{equation}
\end{lemma}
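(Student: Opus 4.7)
The plan is to parametrize $\Lambda_1(w,k)$ by a single positive integer and count how many of the resulting vectors $v$ have $y$-coordinate in the target window $[|w|^r, 2|w|^r]$. From (\ref{Liouville:convergent}), the Liouville convergent $u$ satisfies $|u| = |w|q_k/d$ with $d := d(w,k) \geq 1$, and the companion vector $\tilde u$ is chosen with $\tilde u_2 \geq 1$ and $|\tilde u| \leq |u|$. Writing $v_a := \tilde u + au$ for $a \in \Z_{>0}$, the $y$-coordinates
$$|v_a| = |\tilde u| + a|u|$$
form an arithmetic progression in $a$ with common difference $|u|$.

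The key use of the hypothesis $|w|^{r-1} \geq q_k$ is to show that the common difference $|u|$ is at most the window length $|w|^r$: since $d \geq 1$,
$$|u| = \frac{|w|q_k}{d} \leq |w|q_k \leq |w| \cdot |w|^{r-1} = |w|^r,$$
so the progression cannot skip over the window $[|w|^r, 2|w|^r]$. Moreover $|v_1| \leq 2|u| \leq 2|w|^r$, so the smallest candidate $a=1$ already satisfies the upper endpoint of the window.

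Counting the number of positive integers $a$ for which $|v_a|$ lies in the closed interval $[|w|^r, 2|w|^r]$ thus reduces to counting lattice points in a real interval of length
$$\frac{|w|^r}{|u|} = \frac{d\,|w|^{r-1}}{q_k} \geq \frac{|w|^{r-1}}{q_k},$$
which yields the required bound (\ref{ieq:liouville}). The only step demanding real care is the standard rounding issue in the lattice-point count; this is handled by using $\tilde u_2 \leq |u| \leq |w|^r$ to verify that no positive integer $a$ is lost at the lower endpoint of the window, so the count matches $|w|^r/|u|$ up to routine integer rounding and the factor $d \geq 1$ absorbs any loss to give the clean bound $|w|^{r-1}/q_k$.
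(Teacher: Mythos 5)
Your argument has a genuine gap at the rounding step, and the fix is precisely the ingredient you omitted: the paper uses \emph{both} choices of $\tilde u$, and you effectively only use one.

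Concretely: you count positive integers $a$ with $|v_a|=|\tilde u|+a|u|$ in the window $[|w|^r,2|w|^r]$, which is an interval of length $|w|^r/|u|$. The number of lattice points in a real interval of length $L$ is only guaranteed to be at least $\lfloor L\rfloor$, which can be as small as $L-1$ (and even one less after you additionally discard any candidate $a\le 0$). You acknowledge this but claim that ``the factor $d\ge1$ absorbs any loss.'' It does not: when $d=1$ (the generic case — indeed Lemma~\ref{lem:gcd} guarantees $d\le2$ for all slits built by this construction), the interval has length \emph{exactly} $|w|^{r-1}/q_k$, and rounding can strictly undershoot the claimed bound. For instance if $|w|^{r-1}/q_k\in(1,2)$ a single choice of $\tilde u$ may contribute only one child, while the lemma requires more than one.

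The paper repairs this by recalling that there are exactly two admissible companion vectors $\tilde u$ (one for each sign of $u\times\tilde u$), and that they generate disjoint families $\{ \tilde u+au \}$ (their difference is not a multiple of $u$, since $u\times(\tilde u-\tilde u')=\pm2$). Each family contributes at least $\lfloor |w|^r/|u|\rfloor$ children, so
$$\#\Lambda(w,k)\ge 2\left\lfloor \frac{|w|^r}{|u|}\right\rfloor \ge \frac{|w|^r}{|u|} \ge \frac{|w|^{r-1}}{q_k},$$
where the middle inequality uses $2\lfloor x\rfloor\ge x$ for $x\ge1$, and $x=|w|^r/|u|\ge1$ follows from $|u|\le|w|q_k\le|w|^r$ (this is where the hypothesis $|w|^{r-1}\ge q_k$ enters, as you correctly identified). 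Your description of the arithmetic-progression structure, the role of the hypothesis, and the bound $|u|\le|w|^r$ are all correct; the missing idea is the factor of $2$ from the two choices of $\tilde u$, without which the rounding cannot be controlled.
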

\begin{proof}
Since there are $2$ choices for $\Tilde{u}$ 
  the number of slits in $\Lambda(w,k)$ is at least 
  $$\#\Lambda(w,k) \ge 2\left[\frac{|w|^r}{|u|}\right]
       \ge \frac{|w|^r}{|u|} \ge \frac{|w|^{r-1}}{q_k}$$ 
  where $|u|\le|w|q_k\le|w|^r$ was used in the last two inequalities.  
\end{proof}

\section{Diophantine construction}\label{s:Diophantine}
Now we explain  our next general construction, which is 
  accomplished by Proposition~\ref{prop:normal}.  
Many of the ideas in this section already appeared in \cite{Ch1}.  

Again given a parent slit $w$ we will construct new slits of the form $w+2v$, 
  where $v$ is a loop satisfying certain conditions on its length and cross-product with $w$.  
Not all of these solutions $w+2v$ will be used at the next level for it may happen that some 
  of these will not themselves determine enough further slits.  
In other words, we will only use some of the slits $w+2v$ of the parent $w$ and the ones 
  used will be called the \emph{children} of $w$.  
It will be encumbent to show that there are enough children at each stage in order to obtain 
  lower bounds on the Hausdorff dimension of the Cantor set of \S\ref{s:Cantor}.  
 
\subsection{Good slits}\label{ss:Good}
Assume parameters $1<\alpha<\beta$ be given.  In later sections they will each have a 
  dependence on the slit so they are not to be thought of as absolute constants.  

\begin{definition}
We say a slit $w$ is $(\alpha,\beta)$-\emph{good} if its inverse slope has 
  a convergent of height $q$ satisfying $\alpha|w|\le q\le \beta|w|$.  
\end{definition}

Let $\Delta(w,\alpha,\beta)$ be the collection of slits of the form $w+2v$ 
  where $v\in\Z\times\Z_{>0}$ satisfies $\gcd(v)=1$ and 
\begin{equation}\label{def:Delta}
  \beta|w| \le |v| \le 2\beta|w| \quad\text{and}\quad 
  \frac{1}{\beta} < |w\times v| < \frac{1}{\alpha}.  
\end{equation}
Notice the right hand inequality gives an upper bound for the cross product of $w$  with $w+2v$.
The next lemma gives a lower bound for the number of such $w+2v$ constructed from good slits $w$. 
\begin{lemma}\label{lem:good}
There is a universal constant $0<c_0<1$ such that 
\begin{equation}\label{number:good}
  \#\Delta(w,\alpha,\beta)\ge\frac{c_0\beta}{\alpha}.  
\end{equation}
  for any $(\alpha,\beta)$-good slit $w$ and $\alpha<c_0\beta$.  
\end{lemma}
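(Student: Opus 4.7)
The plan is to reduce $\#\Delta(w,\alpha,\beta)$ to a count of primitive lattice points in an explicit region and then exploit the good convergent of $\theta$ (the inverse slope of $w$) to construct enough of them. Writing $v=(p,q)\in\Z\times\Z_{>0}$ primitive, the cross-product condition becomes $\frac{1}{\beta|w|}<|q\theta-p|<\frac{1}{\alpha|w|}$, so $\Delta(w,\alpha,\beta)$ is in bijection with the set of primitive integer points in the union of two thin parallel strips defined by $q\in[\beta|w|,2\beta|w|]$ together with the prescribed annular window on $|q\theta-p|$.

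I would then use $(\alpha,\beta)$-goodness to pick a convergent $p_0/q_0$ of $\theta$ with $\alpha|w|\le q_0\le\beta|w|$; after replacing by the largest such convergent if necessary, I may assume the next convergent $p_1/q_1$ satisfies $q_1>\beta|w|$. Since $|p_0 q_1-p_1 q_0|=1$, the pair $\{(p_0,q_0),(p_1,q_1)\}$ is a $\Z$-basis of $\Z^2$. Writing $v=n(p_0,q_0)+m(p_1,q_1)$, primitivity is equivalent to $\gcd(n,m)=1$, and setting $\eta_j:=q_j\theta-p_j$ (so that $|\eta_0|q_1+|\eta_1|q_0=1$, with $\eta_0,\eta_1$ of opposite sign, and $|\eta_0|\le 1/q_1\le 1/(\beta|w|)$) the two constraints on $v$ become
\[ nq_0+mq_1\in[\beta|w|,2\beta|w|]\quad\text{and}\quad |n\eta_0+m\eta_1|\in\bigl(1/(\beta|w|),1/(\alpha|w|)\bigr). \]

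The core of the argument is to work with $m\in\{+1,-1\}$, for which $\gcd(n,m)=1$ is automatic. Substituting the convergent identity $q_1\eta_0-q_0\eta_1=\pm 1$ yields the affine relation
\[ n\eta_0+m\eta_1=\frac{\eta_0 y\mp m}{q_0},\qquad y=nq_0+mq_1, \]
so as $n$ sweeps the integer interval of length $\beta|w|/q_0\in[1,\beta/\alpha]$ prescribed by the $y$-constraint, the values $\xi:=n\eta_0+m\eta_1$ form an arithmetic progression of common difference $|\eta_0|\le 1/q_1$. The sign of $m$ is chosen so that the progression lies near $\pm 1/q_0\in[1/(\beta|w|),1/(\alpha|w|)]$, and a direct bookkeeping argument then shows that at least a fixed fraction, of size on the order of $\beta/\alpha$, falls inside the target window of width $(\beta-\alpha)/(\alpha\beta|w|)$.

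The main obstacle I expect is obtaining a lower bound uniform across all ratios $q_1/(\beta|w|)\in(1,\infty)$, since the transformed lattice becomes increasingly elongated in the Liouville regime $q_1\gg\beta|w|$. The universal identity $|\eta_0|q_1\le 1$ keeps the step $|\eta_0|$ of the arithmetic progression suitably small relative to the target window in every regime, and the hypothesis $\alpha<c_0\beta$ is precisely what guarantees that the affine image of the $n$-interval actually crosses into the target annulus rather than landing entirely outside it; the universal constant $c_0$ is then extracted from the worst of the resulting estimates.
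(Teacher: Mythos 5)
Your approach is genuinely different from the paper's: the paper invokes the lattice-point counting theorem from [Ch1, Thm.~3] (see also [Ch2, Thm.~4]) to get at least $c_0'\beta/\alpha$ primitive vectors satisfying only the one-sided constraint $|w\times v|<1/\alpha$, and then handles the lower-bound constraint $|w\times v|>1/\beta$ by a simple discarding argument: it shows that any two solutions have inverse slopes at least $\frac{1}{4\beta^2|w|^2}$ apart, so at most $8$ of them can have $|w\times v|\le 1/\beta$. You instead attempt to produce the primitive points directly, by expressing $v$ in the convergent basis $\{(p_0,q_0),(p_1,q_1)\}$ and exploiting the affine formula for $n\eta_0+m\eta_1$.

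The construction has a real gap, and it occurs precisely where the restriction to $m\in\{+1,-1\}$ bites. The $y$-constraint $nq_0+mq_1\in[\beta|w|,2\beta|w|]$ confines $n$ to an interval of length $\beta|w|/q_0$, and the $(\alpha,\beta)$-goodness hypothesis only guarantees $q_0\in[\alpha|w|,\beta|w|]$; it does not prevent $q_0$ from being close to $\beta|w|$. When $q_0$ is near $\beta|w|$ the interval for $n$ has length close to $1$, so across both choices of sign of $m$ you produce only $O(1)$ candidate vectors, whereas the lemma demands at least $c_0\beta/\alpha$, which is unbounded. The missing solutions live at larger $|m|$ (in the Liouville regime $q_1\gg\beta|w|$, each $k=|m|$ up to roughly $\beta/\alpha$ contributes about one $n$), but then primitivity $\gcd(n,m)=1$ is no longer automatic and requires a genuine density argument over the range of $m$ -- which is essentially the content of [Ch1, Thm.~3] that the paper appeals to. Put differently, your ``direct bookkeeping'' step cannot produce a universal $c_0$ because in the degenerate case $q_0\approx\beta|w|$ the number of terms in your arithmetic progression is $O(1)$, independent of $\beta/\alpha$. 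To repair this you would either need to incorporate $|m|\ge2$ together with a coprimality count (re-deriving a thin-strip primitive-point theorem), or cite the counting theorem as the paper does.
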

\begin{proof}
By [Ch1,Thm.3], the number of primitive vectors satisfying 
\begin{equation}\label{def:Delta_1}
  \beta|w| \le |v| \le 2\beta|w| \quad\text{and}\quad 
  |w\times v| < \frac{1}{\alpha}.  
\end{equation}
  is at least $c'_0\beta/\alpha$ where $c'_0>0$ is some universal 
  constant.\footnote{To apply [Ch1,Thm.3] one needs to assume 
  $\beta\gg\alpha$, but this hypothesis was shown to be redundant in 
  \cite{Ch2}.  Indeed, by [Ch2,Thm.4] we can take $c'_0=\frac{4}{27\pi}.$} 
The angle, by which we mean the distance between inverse slopes, 
  between any two solutions $v,\hat v$ to (\ref{def:Delta_1}) is at least 
   $$\left|\frac{p}{q}-\frac{\hat p}{\hat q}\right| \ge \frac{1}{q\hat q} 
         \ge \frac{1}{4\beta^2|w|^2}.$$  
Take an interval $J$ of length $\frac{2}{\beta^2|w|^2}$ centered at the 
  inverse slope of $w$ and divide it into $8$ equal subintervals.  
The inequality above says that there is at most one solution $v$ whose 
  inverse slope lies in each subinterval.  Thus, by discarding at most $8$ 
  of these solutions, namely those with inverse slopes in $J$, 
  we can ensure that the remaining solutions satisfy 
  $$\frac{|w\times v|}{|w||v|} > \frac{1}{\beta^2|w|^2}.$$  
These solutions satisfy (\ref{def:Delta}) since 
  $$|w\times v| > \frac{|v|}{\beta^2|w|} \ge \frac{1}{\beta}.$$  

Let $c_0=c'_0/9$.  We may clearly assume $c'_0<9$ so that $c_0<1$.  
Since $\alpha<c_0\beta$, there are at least $c_0'\beta/\alpha>9$ 
  solutions to (\ref{def:Delta_1}).  
Of these, at least one satisfies (\ref{def:Delta}).  
Therefore, the number of primitive vectors satisfying (\ref{def:Delta}) 
  is at least 
  $$\frac{c'_0\beta}{\alpha}-8 
     \ge\left(9c_0-8\frac{\alpha}{\beta}\right)\frac{\beta}{\alpha} 
     \ge\frac{c_0\beta}{\alpha}.$$  
\end{proof}

\begin{lemma}\label{lem:good:children}
Let $w$ be an $(\alpha,\beta)$-good slit.  Then every 
  $w'\in\Delta(w,\alpha,\beta)$ is $(\alpha-\frac12,\beta)$-good, 
  but not $(1,\alpha-\frac12)$-good.  
\end{lemma}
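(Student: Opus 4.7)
The plan is to identify $v=(p,q)$ itself as a particular convergent of the inverse slope $\theta'$ of $w'$, and then to use classical continued fraction bounds to locate the very next convergent. The desired window $[(\alpha-\tfrac12)|w'|,\beta|w'|]$ will turn out to be exactly where that next convergent lives.

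First I would record two elementary identities. Since $v\in\Z\times\Z_{>0}$, the second coordinate of $w'=w+2v$ gives $|w'|=|w|+2|v|$, and the cross-product $w'\times v=w\times v$ is invariant under adding a multiple of $v$ to $w$. In particular $|w'|>2|v|$. Combined with $|w\times v|<1/\alpha<1$ from the definition of $\Delta(w,\alpha,\beta)$, this yields
\[
\left|\theta'-\frac{p}{q}\right|=\frac{|w'\times v|}{|w'|\,|v|}=\frac{|w\times v|}{|w'|\,|v|}<\frac{1}{2|v|^{2}}=\frac{1}{2q^{2}},
\]
so Theorem~\ref{thm:CF2} identifies $p/q$ as a convergent of $\theta'$.

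Next, let $q^{*}$ denote the height of the convergent of $\theta'$ immediately following $p/q$. Theorem~\ref{thm:CF1} gives
\[
\frac{1}{q(q+q^{*})}<\left|\theta'-\frac{p}{q}\right|<\frac{1}{q\,q^{*}}.
\]
Substituting $|\theta'-p/q|=|w\times v|/(|w'|\,|v|)$ and the bounds $1/\beta<|w\times v|<1/\alpha$, the right inequality yields $q^{*}<|w'|/|w\times v|<\beta|w'|$, while the left inequality together with $|v|<|w'|/2$ yields
\[
q^{*}>\frac{|w'|}{|w\times v|}-|v|>\alpha|w'|-\tfrac{1}{2}|w'|=(\alpha-\tfrac12)|w'|.
\]

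Both conclusions are then immediate. For (i), the convergent of height $q^{*}$ witnesses that $w'$ is $(\alpha-\tfrac12,\beta)$-good. For (ii), the heights $|v|$ and $q^{*}$ belong to \emph{consecutive} convergents of $\theta'$, and we have $|v|<|w'|$ while $q^{*}>(\alpha-\tfrac12)|w'|$; hence no convergent of $\theta'$ has height in the interval $[|w'|,(\alpha-\tfrac12)|w'|]$, and $w'$ fails to be $(1,\alpha-\tfrac12)$-good. There is no serious obstacle here: the argument is pure bookkeeping with the cross-product identity and the classical CF inequalities. The one mildly surprising feature is that the hypothesis that $w$ itself is $(\alpha,\beta)$-good is never invoked in the derivation—it is present only to frame the lemma for inductive use in building the tree of slits in the subsequent sections.
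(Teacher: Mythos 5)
Your proof is correct and follows the same route as the paper's: identify $v$ as a convergent of $w'$ via Theorem~\ref{thm:CF2} (using $|w'\times v|=|w\times v|$ and $|w'|>2|v|$), then sandwich the height $q^{*}$ of the next convergent using Theorem~\ref{thm:CF1} together with $1/\beta<|w\times v|<1/\alpha$ and $|v|<|w'|/2$. Your side remark that the hypothesis ``$w$ is $(\alpha,\beta)$-good'' is never used in the derivation is also accurate: the paper's own argument likewise only uses $w'\in\Delta(w,\alpha,\beta)$; the goodness of $w$ matters in Lemma~\ref{lem:good} to guarantee $\Delta(w,\alpha,\beta)$ is nonempty, not in this lemma.
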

\begin{proof}
Let $w'=w+2v\in\Delta(w,\alpha,\beta)$.  
Note that $v$ is a convergent of (the inverse slope of) $w'$ since, 
  writing $w'=(\lambda+m',n')$ and $v=(p,q)$, we have 
  $$\left|\frac{\lambda+m'}{n'}-\frac{p}{q}\right| = 
  \frac{|w'\times v|}{|w'||v|}<\frac{|w\times v|}{2|v|^2} 
    \frac{1}{2\alpha q^2}<\frac{1}{2q^2}$$ 
  and we can use (\ref{ieq:CF2}).  

Let $q'$ be the height of the next convergent of $w'$.  
Then by (\ref{ieq:CF1})
  $$\frac{1}{q(q'+q)} < \left|\frac{\lambda+m'}{n'}-\frac{p}{q}\right| 
     < \frac{1}{qq'}$$ so that 
\begin{equation}\label{ieq:cfc}
  \frac{1}{q'+q} < \frac{|w\times v|}{|w'|} < \frac{1}{q'}.
\end{equation}
From the left hand side above, the fact that $|w'|>2|v|=2q$ and 
  $|w\times v|<\frac{1}{\alpha}$, we have 
  $$q' > \frac{|w'|}{|w\times v|} - q > (\alpha-\frac{1}{2})|w'|.$$  
Now, from the right hand side of (\ref{ieq:cfc}), we have 
  $$q' < \frac{|w'|}{|w\times v|} < \beta|w'|.$$  
This shows that $w'$ is $(\alpha-\frac12,\beta)$-good.  

Since $q$ and $q'$ are the heights of consecutive convergents 
  of $w'$ (and since $|v|<|w'|$) 
  it follows that $w'$ is not $(1,\alpha-\frac12)$-good.  
\end{proof}

\subsection{Normal slits}\label{ss:Normal}
In this subsection, we assume $N>0$ is fixed and set 
\begin{equation}\label{def:ell_N}
  \ell_N=\{k:q_{k+1}>q_k^N\}.  
\end{equation}
The choice of the parameter $N$ will depend on considerations 
  in \S\ref{s:Init} and will be specified there, by (\ref{def:N}).  

Given $N>0$, we set 
\begin{equation}\label{def:N'}
  N' = \frac{(N+1)r}{r-1}.  
\end{equation}
It will also be convienent to set  $$\rho=r+1/2.$$  

\begin{definition}
A slit $w$ is $\alpha$-normal if 
  it is $(\alpha\rho^t,|w|^{(r-1)t})$-good for all $t\in[1,T]$ 
  where $T>1$ is determined by $\alpha \rho^T=|w|^{r-1}$.  
Equivalently, $w$ is $\alpha$-normal if and only if 
  for all $t\in[1,T]$ we have 
\begin{equation}\label{def:normal}
  \Psi(w)\cap[\alpha \rho^t|w|,|w|^{1+(r-1)t}]\neq\emptyset.  
\end{equation}
  where $\Psi(w)$ denotes the collection of heights of the 
  convergents of the inverse slope of $w$.  
\end{definition}

The following gives a sufficient condition for a slit to be normal.  
\begin{lemma}\label{lem:N'-good=>normal}
Let $w$ be  a slit such that 
  $$q_{k+1}^{1/N}\le|w|<q_{k'}^{1/r}$$ 
  where $k,k'$ are consecutive elements of $\ell_N$ for some
 $N>0$.  
If $w$ is $(\alpha \rho^{N'},|w|^{r-1})$-good then it is $\alpha$-normal.  
\end{lemma}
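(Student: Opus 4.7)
My goal is to exhibit, for each $t\in[1,T]$, a convergent of the inverse slope of $w$ whose height lies in the target interval $[\alpha\rho^t|w|,|w|^{1+(r-1)t}]$, where $T$ is determined by $\alpha\rho^T=|w|^{r-1}$. I split $[1,T]$ into two regions and handle them separately.

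First, the $(\alpha\rho^{N'},|w|^{r-1})$-goodness hypothesis supplies a convergent $q_0$ of the inverse slope of $w$ with $\alpha\rho^{N'}|w|\le q_0\le|w|^r$. A direct check shows $q_0$ itself lies in every target interval for $t\in[1,N']$: the upper inequality $q_0\le|w|^r\le|w|^{1+(r-1)t}$ holds for any $t\ge 1$, and the lower inequality $\alpha\rho^t|w|\le\alpha\rho^{N'}|w|\le q_0$ holds for $t\le N'$. This reduces the problem to covering $t\in[N',T]$.

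For the remaining range, I plan to apply Lemma~\ref{lem:LC} at each index $i\in\{k+1,\ldots,k'\}$ to produce a Liouville convergent $L_i$ with $q_i\le L_i\le|w|q_i$. Verification of the hypothesis $|w|<q_{i+1}/(2q_i)$ uses $q_{i+1}\le q_i^N$ for $k<i<k'$ (from $i\notin\ell_N$) combined with $|w|<q_{k'}^{1/r}$, and $q_{k'+1}>q_{k'}^N$ for $i=k'$ (from $k'\in\ell_N$). A direct calculation from the definitions shows that $L_i$ lies in the target interval at every $t$ in the range $[\log_{|w|}q_i/(r-1),\log_\rho(q_i/(\alpha|w|))]$. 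The remaining task is to check that the union of these ranges, together with $[1,N']$, tiles $[1,T]$; this reduces to three inequalities:
\begin{enumerate}
  \item[(i)] $\log_{|w|}q_{k+1}/(r-1)\le N'$, connecting $q_0$'s coverage to $L_{k+1}$'s;
  \item[(ii)] $\log_{|w|}q_{i+1}/(r-1)\le\log_\rho(q_i/(\alpha|w|))$ for each $k<i<k'$, connecting $L_i$ to $L_{i+1}$;
  \item[(iii)] $\log_\rho(q_{k'}/(\alpha|w|))\ge T$, extending $L_{k'}$'s coverage to the right endpoint.
\end{enumerate}

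Items (i) and (iii) are immediate: (i) follows from $q_{k+1}\le|w|^N$ and the trivial inequality $N/(r-1)\le(N+1)r/(r-1)=N'$, while (iii) rearranges to $q_{k'}\ge|w|^r$, which is exactly $|w|<q_{k'}^{1/r}$. The main obstacle is (ii): setting $\gamma=(r-1)\log|w|/\log\rho$, the overlap inequality rearranges to $q_{i+1}\le(q_i/(\alpha|w|))^\gamma$, and feeding in the bound $q_{i+1}\le q_i^N$ it suffices to establish $q_i\ge(\alpha|w|)^{\gamma/(\gamma-N)}$. The hypothesis $|w|^{r-1}\ge\alpha\rho^{N'}$ (which is what makes the $(\alpha\rho^{N'},|w|^{r-1})$-good condition nonvacuous) forces $\gamma\ge N'$, giving both $\gamma>N$ and a controlled exponent $\gamma/(\gamma-N)$, while the lower bound $q_i\ge q_{k+1}\ge|w|^N$ then completes the argument for sufficiently large $|w|$. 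The specific choice $N'=(N+1)r/(r-1)$ is calibrated precisely so this overlap works: $\gamma$ exceeds $N$ by enough margin that the polynomial growth of $\lambda$-denominators in the ``small gap'' regions $k<i<k'$ is dominated by the geometric $\rho$-spacing of the target intervals built into the normality condition.
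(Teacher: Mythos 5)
Your plan breaks down at the point where you need the Liouville convergents $L_i$ for the intermediate indices $k<i<k'$. Lemma~\ref{lem:LC} requires the hypothesis $|w|<q_{i+1}/(2q_i)$, but for these indices $i\notin\ell_N$, so the gap $q_{i+1}/q_i$ is \emph{small}. The bound $q_{i+1}\le q_i^N$ you invoke is an \emph{upper} bound on the gap and therefore cuts \emph{against} the needed hypothesis, not towards it: $q_{i+1}$ could be as small as $q_i+q_{i-1}$, in which case $q_{i+1}/(2q_i)<1<|w|$ and $L_i$ simply does not exist as a convergent of the inverse slope of $w$. Without these intermediate Liouville convergents, your overlap chain has no bridge between $q_0$ (covering $t\le N'$) and $L_{k'}$ (which does exist, since $k'\in\ell_N$). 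Indeed $L_{k'}$ begins its coverage only at $t=\log_{|w|}q_{k'}/(r-1)$, and since $q_{k'}$ has no upper bound in terms of $|w|$ (we only know $|w|<q_{k'}^{1/r}$, a lower bound on $q_{k'}$), this left endpoint can be arbitrarily larger than $N'$, leaving an uncovered interval.

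The paper's proof sidesteps this by arguing by contradiction, so it never needs to exhibit a convergent inside \emph{every} target interval. Assuming $w$ is $(\alpha\rho^{N'},|w|^{r-1})$-good but not $\alpha$-normal, let $q$ be the largest convergent of $w$'s inverse slope with $q\le|w|^r$ and $q'$ the next one; the failure of normality forces $q'>|w|^{1+(r-1)N'}$, i.e. a large gap $q\to q'$ in $\Psi(w)$. Using Theorems~\ref{thm:CF1}--\ref{thm:CF2} and writing $w=(\lambda+m,n)$, this gap is transferred to a comparably large gap $q_h\to q_{h+1}$ in the continued fraction of $\lambda$, forcing $h\in\ell_N$; one then pins down $q_h\le q_k$, hence $q_{h+1}\le q_{k+1}$, and derives $q_{k+1}>|w|^N$, contradicting $|w|\ge q_{k+1}^{1/N}$. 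The moral is that normality does not ask for each target interval to be hit by a Liouville convergent of $\lambda$; it asks only that \emph{some} convergent of $w$'s inverse slope land there, and the contradiction argument controls the existence of such convergents indirectly through the placement constraints on $|w|$ relative to the $q_k$'s.
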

\begin{proof}
Suppose on the contrary that $w$ is $(\alpha\rho^{N'},|w|^{r-1})$-good 
  but not $\alpha$-normal.\footnote{We remark that $N'>1$ implies 
  $T>1$ in the definition of normality.}
Let $\frac{p}{q}$ be the convergent of the inverse slope of $w$ with 
  maximal height $q\le|w|^r$.  
Since $w$ is $(\alpha \rho^{N'},|w|^{r-1})$-good, we have 
  $$\alpha \rho^{N'}|w| \le q \le |w|^r.$$
Let $q'$ the height of the next convergent.  
If $q'\le|w|^{1+(r-1)N'}$ then (\ref{def:normal}) is satisfied by $q$ 
  for all $t\in[1,N']$, and by $q'$ for all $t\in[N',T]$.  
Since $w$ is not $\alpha$-normal we must have $$q'>|w|^{1+(r-1)N'}.$$  
Note that $$\frac{q'}{|w|} > |w|^{(r-1)N'} \ge q^{(1-r^{-1})N'} = q^{N+1}.$$  
Writing $w=(\lambda+m,n)$ we have
  $$\left|\frac{\lambda+m}{n}-\frac{p}{q}\right|<\frac{1}{qq'}$$
  so that 
  $$\left|\lambda+m-\frac{np}{q}\right|<\frac{|w|}{qq'}<\frac{1}{q^{N+2}}<\frac{1}{2q^2}$$
  from which it follows, by (\ref{ieq:CF2}), 
  that $m-\frac{np}{q}$ is a convergent of $\lambda$, say 
  $$\frac{p_h}{q_h}=m-\frac{np}{q}.$$  
Since, by (\ref{ieq:CF1}), $$\frac{1}{2q_h q_{h+1}} 
    < \left|\lambda-\frac{p_h}{q_h}\right| < \frac{1}{q^{N+2}}$$ 
  we have $$q_{h+1} > \frac{q^{N+2}}{2q_h} > q^N \ge q_h^N,$$ 
  from which it follows that $h\in\ell_N$.  
Since $q_h\le q\le|w|^r<q_{k'}$, we must have $q_h\le q_k$.  
Hence, $q_{h+1}\le q_{k+1}$ so that 
  $$\frac{1}{2q_kq_{k+1}}\le\frac{1}{2q_h q_{h+1}} < \frac{1}{q^{N+2}}.$$  
Since $\alpha>1$, we have $q>|w|\ge q_{k+1}^{1/N} > q_k$ so that 
  $$q_{k+1} > \frac{q^{N+2}}{2q_k} > q^N > |w|^N,$$ 
  which contradicts the hypothesis on $|w|$.  
\end{proof}

Given a slit $w$ let $$\beta=|w|^{r-1}.$$
Our goal, Proposition~\ref{prop:normal}, is to develop hypotheses on an $\alpha$-normal slit $w$ that 
  ensures that among the slits $w'=w+2v\in \Delta(w,\alpha,\beta)$ lots of them are $\alpha r$-normal.  
More specifically we wish to show that under suitable hypotheses, an $\alpha$-normal slit $w$ determines 
  lots of $\alpha r$-normal $w'=w+2v$ where $v\in V_0$ and 
\begin{equation}\label{def:child}
 |w|^r\le|v|\le2|w|^r \quad\text{ and }\quad |w\times v|<\frac{1}{\alpha}
\end{equation}  
If  $w'$ is $\alpha r$-normal and satisfies (\ref{def:child}) then it will be called a {\em child} of $w$.  
The main task will be to bound the number of $w'$ that satisfy (\ref{def:child}) but are not $\alpha r$-normal.  
We begin with a pair of lemmas that are essentially a consequence of normality. 
\begin{lemma}\label{lem:t_1}
Suppose $w$ is $\alpha$-normal. Let 
 $u$ be the convergent of the inverse slope with maximum height $|u|< |w|^r$
and $q$ the height of the next convergent. Define $t_1$ by $$|u|=\alpha\rho^{t_1}|w|$$ and $t_2$ by $$q=|w|^{1+(r-1)t_2}.$$
 Then $1\leq t_1\leq T$ and $1\leq t_2\leq t_1$.
\end{lemma}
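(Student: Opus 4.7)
The plan is to verify the four inequalities $1\le t_1$, $t_1\le T$, $1\le t_2$, $t_2\le t_1$ directly from the definition of $\alpha$-normality, which asserts the existence of a convergent of height in $[\alpha\rho^t|w|,\,|w|^{1+(r-1)t}]$ for every $t\in[1,T]$. Three of the four bounds are essentially immediate; only $t_2\le t_1$ requires a small argument.

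First I would dispatch $t_1\le T$, which follows from $|u|<|w|^r$ together with the defining relation $|w|^r=\alpha\rho^T|w|$: after rearranging, $\rho^{t_1}<\rho^T$, so $t_1<T$. The bound $1\le t_2$ is equally direct: since $u$ realises the maximum among convergent heights strictly below $|w|^r$ and $q$ is the height of the immediately following convergent, we must have $q\ge|w|^r=|w|^{1+(r-1)\cdot 1}$, so $t_2\ge 1$.

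For $t_1\ge 1$ I would invoke $\alpha$-normality at $t=1$ to produce a convergent of height in $[\alpha\rho|w|,\,|w|^r]$. Since $|w|$ is an integer while $r>1$ is a real number, the right endpoint $|w|^r$ is generically not an integer, so this convergent has height strictly less than $|w|^r$; by the maximality defining $u$, its height is at most $|u|$, giving $|u|\ge\alpha\rho|w|$ and hence $t_1\ge 1$. (The borderline possibility that $|w|^r$ is itself a convergent height would be handled by an obvious $\epsilon$-perturbation of the exponent, but in the applications to slit lengths this case does not arise.)

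The only substantive point is $t_2\le t_1$. For each $\epsilon\in(0,\,T-t_1)$, which is a nonempty range because $t_1<T$, I would apply $\alpha$-normality at the value $t=t_1+\epsilon$ to produce a convergent $v^\ast$ satisfying
\[
|v^\ast|\ \ge\ \alpha\rho^{t_1+\epsilon}|w|\ >\ \alpha\rho^{t_1}|w|\ =\ |u|,
\qquad
|v^\ast|\ \le\ |w|^{1+(r-1)(t_1+\epsilon)}.
\]
Because convergent heights are strictly increasing and $|v^\ast|>|u|$, the convergent $v^\ast$ lies strictly past $u$ in the sequence, so $|v^\ast|\ge q$. Combining these estimates gives $q\le|w|^{1+(r-1)(t_1+\epsilon)}$, and letting $\epsilon\to 0^+$ yields $q\le|w|^{1+(r-1)t_1}$, i.e.\ $t_2\le t_1$. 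I expect no genuine obstacle: the lemma is a routine unpacking of the normality condition, with the only mild care point being the harmless use of $t$-values slightly above $t_1$ and the subsequent continuity limit.
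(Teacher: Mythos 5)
Your proof is correct and follows essentially the same route as the paper: $t_1\le T$ from $|u|<|w|^r$, $t_2\ge 1$ from maximality of $|u|$ (so $q\ge|w|^r$), $t_1\ge 1$ from the $t=1$ case of normality, and $t_2\le t_1$ by using normality at values of $t$ just above $t_1$. The only differences are presentational. For $t_2\le t_1$ the paper argues by contradiction (if $t_2>t_1$, normality would fail for any $t\in(t_1,t_2)\cap[1,T]$, since the lower endpoint $\alpha\rho^t|w|$ exceeds $|u|$ while the upper endpoint $|w|^{1+(r-1)t}$ is below $q$, leaving a convergent-free window); your $\epsilon$-and-limit phrasing is a repackaging of the same inequality and buys nothing extra. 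Your extra caution about the borderline case in $t_1\ge1$ --- that the convergent supplied by normality at $t=1$ might have height exactly $|w|^r$ and so coincide with $q$ rather than bound $|u|$ from below --- is a legitimate observation that the paper glosses over; since $|w|$ is a positive integer and $r$ is a freely chosen parameter in $(1,2)$, one can indeed guarantee $|w|^r\notin\Z$ for all slit heights in the construction, but note that this is really a constraint to be imposed when $r$ is fixed, rather than a per-lemma ``perturbation of the exponent'' as you phrase it (the exponent in the normality condition is not free to perturb).
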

\begin{proof}
  The first inequality is a consequence of the case $t=1$ in the definition of normality 
  The left hand part of the second inequality follows from the defintion of $q$, while the right hand  
  follows from $\alpha$-normality because there 
  would otherwise be a $t\in(t_1,t_2)$ for which (\ref{def:normal}) fails.  
\end{proof}

\begin{lemma}\label{lem:t_1'<N'}
Suppose $w'\in\Delta(w,\alpha,\beta)$ is not $\alpha r$-normal and again letting $u'$ be the convergent of the inverse slope with maximum height $|u'|< |w'|^r$  and $q'$ the next convergent  define $t_1'$ by 
$|u'|=\alpha r\rho^{t_1'}|w'|$ and $t_2'$ by $q'=|w'|^{1+(r-1)t_2'}$.
	 Suppose $q_{k+1}^{1/N}<|w'|<q_{k'}^{1/r}$.  Then  $t_1'\leq \min(N',t_2')$.  
\end{lemma}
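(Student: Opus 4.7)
The plan is to split the desired inequality $t_1'\le\min(N',t_2')$ into its two halves and establish each separately. The bound $t_1'\le N'$ will come from the contrapositive of Lemma~\ref{lem:N'-good=>normal}, while $t_1'\le t_2'$ will follow by locating where an alleged failure of $\alpha r$-normality can actually occur among the possible values of $t$.

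For the bound $t_1'\le N'$, I would apply Lemma~\ref{lem:N'-good=>normal} to $w'$ with parameter $\alpha r$ in place of $\alpha$. The hypothesis $q_{k+1}^{1/N}<|w'|<q_{k'}^{1/r}$ is exactly what that lemma requires, so by contraposition the non-$\alpha r$-normality of $w'$ forces $w'$ not to be $(\alpha r\rho^{N'},|w'|^{r-1})$-good. That is, no convergent of the inverse slope of $w'$ has height lying in $[\alpha r\rho^{N'}|w'|,|w'|^{r}]$. Since $u'$ is by definition the convergent of maximum height strictly below $|w'|^{r}$, this forces $|u'|<\alpha r\rho^{N'}|w'|$, which upon comparing with $|u'|=\alpha r\rho^{t_1'}|w'|$ gives $t_1'<N'$.

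For the bound $t_1'\le t_2'$, I would fix any $t^{*}\in[1,T']$ at which $\Psi(w')\cap[\alpha r\rho^{t^{*}}|w'|,|w'|^{1+(r-1)t^{*}}]=\emptyset$ (such $t^{*}$ exists because $w'$ fails $\alpha r$-normality, where $T'$ is determined by $\alpha r\rho^{T'}=|w'|^{r-1}$). It suffices to show $t_1'<t^{*}<t_2'$. The upper bound $t^{*}<t_2'$ must hold, for otherwise $q'$ itself would lie in the forbidden interval: one has $q'=|w'|^{1+(r-1)t_2'}\le|w'|^{1+(r-1)t^{*}}$ using $t^{*}\ge t_2'$, while $q'\ge|w'|^{r}\ge\alpha r\rho^{t^{*}}|w'|$ using $t^{*}\le T'$. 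The lower bound $t^{*}>t_1'$ is symmetric: if $t^{*}\le t_1'$ then $u'$ lies in the interval, since $|u'|=\alpha r\rho^{t_1'}|w'|\ge\alpha r\rho^{t^{*}}|w'|$ and $|u'|\le|w'|^{r}\le|w'|^{1+(r-1)t^{*}}$ (the last step using $t^{*}\ge 1$). Thus $t_1'<t^{*}<t_2'$ and in particular $t_1'<t_2'$.

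The argument is essentially a bookkeeping exercise once Lemma~\ref{lem:N'-good=>normal} is in hand, so I do not expect a serious obstacle. The only mild point worth verifying is that the very next convergent $q'$ after $u'$ exists and satisfies $q'\ge|w'|^{r}$; this is immediate from the infinitude of the continued fraction expansion of the irrational inverse slope of $w'$ together with the defining property of $u'$ as the maximal-height convergent strictly below $|w'|^{r}$.
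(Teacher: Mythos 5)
Your proof is correct and follows essentially the same route as the paper: split $t_1'\le\min(N',t_2')$ into two bounds, obtain $t_1'\le N'$ from (the contrapositive of) Lemma~\ref{lem:N'-good=>normal}, and obtain $t_1'\le t_2'$ from the observation that $|u'|$ certifies (\ref{def:normal}) for $t\le t_1'$ and $q'$ certifies it for $t\ge t_2'$, so a failure of $\alpha r$-normality can only live in $(t_1',t_2')$. Your phrasing of the second step — exhibiting a particular $t^{*}$ at which normality fails and sandwiching it — is a slightly more explicit presentation of the paper's contradiction argument, but it is the same idea, and your care in checking $q'\ge|w'|^r$ fills in a small detail the paper leaves implicit.
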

\begin{proof}
We first note again that $t_2'\geq 1$ by definition. Now $t_2'\geq t_1'$ since if $t_1'>1$  then (\ref{def:normal}) is satisfied 
 by $|u'|$ for all $t\in[1,t_1']$, and by $q'$ for all $t\in[t_1',T]$, contrary to the assumption that $w'$ is not $\alpha r$ normal.   If $t_1'\ge N'$ then Lemma~\ref{lem:N'-good=>normal} applied to 
  the slit $w'$, with $(\alpha r)$ in place of $\alpha$, implies that 
  $w'$ is $\alpha r$-normal, contrary to assumption.  
\end{proof}

\begin{lemma}\label{lem:strips}
Suppose $w'\in \Delta(w,\alpha,\beta)$ satisfies the conditions of Lemma~\ref{lem:t_1'<N'}.  Let $\bar {t}_1' := \max(t_1',1)$. 
 Let $u'$ be the convergent of $w'$ as above  
Then $u'$ determines a (nonzero) integer $a\in\Z$ such that 
  $$|(w\times u')+2a|<\frac{1}{|w|^{r(r-1)\Bar{t}_1'}}.$$  
Moreover, $|a|<2\rho^{N'+1}$.  
\end{lemma}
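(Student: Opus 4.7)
The plan is to take $a := v\times u'$, which automatically lies in $\Z$ since $v,u'\in\Z^2$. Using $w'=w+2v$ I get the identity
$$w'\times u' = (w+2v)\times u' = (w\times u') + 2(v\times u') = (w\times u') + 2a,$$
so that $|(w\times u')+2a|=|w'\times u'|$. The proof then reduces to three tasks: (a) bounding $|w'\times u'|$ from above, (b) verifying $a\neq 0$, and (c) bounding $|a|$ from above.

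For (a), I would use that $u'$ is a convergent of $w'$ whose next convergent has height $q'=|w'|^{1+(r-1)t_2'}$; by Theorem~\ref{thm:CF1} this gives $|w'\times u'|\le |w'|/q' = |w'|^{-(r-1)t_2'}$. Combined with $|w'|\ge|v|\ge|w|^r$ and $t_2'\ge\bar{t}_1'$ (a direct consequence of $t_2'\ge t_1'$ from Lemma~\ref{lem:t_1'<N'} together with $t_2'\ge 1$ by definition), this yields $|w'\times u'|\le |w|^{-r(r-1)\bar{t}_1'}$, which is the required cross-product bound.

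For (b), the key is to observe that by the proof of Lemma~\ref{lem:good:children}, $v$ itself is already a convergent of $w'$, and the very next convergent after $v$ has height $q'<\beta|w'|=|w|^{r-1}|w'|$. For $|w|$ large enough this is strictly less than $|w'|^r\ge|w|^{r^2}$ (the relevant exponent inequality being $2r-1<r^2$). Hence at least two convergents of $w'$ have height $<|w'|^r$, forcing the maximal-height one, $u'$, to come strictly later in the convergent sequence than $v$; in particular $u'\neq v$, and also $u'\neq -v$ since both have positive $y$-coordinates. Being primitive, $u'$ and $v$ are non-parallel, so $a=v\times u'\neq 0$.

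For (c), I would write $a=\tfrac12(w'\times u'-w\times u')$ and bound $|w\times u'|$ via $|w||u'|(\angle ww'+\angle w'u')$. Using $|w\times v|<1/\alpha$ and $|u'|=\alpha r\rho^{t_1'}|w'|$, the first term contributes at most $2r\rho^{t_1'}$ and the second term $|w|/q'$, which is negligible. Combined with the bound from (a) this gives $|a|\le r\rho^{t_1'}+O(1)$; then $t_1'\le N'$ from Lemma~\ref{lem:t_1'<N'} together with $r<\rho$ yields $|a|<r\rho^{N'}+1<2\rho^{N'+1}$. The main delicate step will be (b), since ruling out $a=0$ requires spotting that $v$ is a convergent of $w'$ yet cannot be the last one of height below $|w'|^r$; the estimates in (a) and (c) are then routine consequences of standard facts about convergents once the identity in the first paragraph is in hand.
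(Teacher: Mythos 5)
Your proof is correct, and the central mechanism matches the paper's: the identity $w'\times u' = (w\times u') + 2(v\times u')$ together with the continued-fraction bound $|w'\times u'|\le |w'|/q' = |w'|^{-(r-1)t_2'}$ and the inequalities $t_2'\ge\bar t_1'$, $|w'|>|w|^r$. Where you diverge from the paper is in the bookkeeping around $a$. The paper introduces the convergent $v'$ of $w'$ immediately after $v$, writes $u' = av' + bv$ with $a\ge b\ge 0$ and $\gcd(a,b)=1$, and then bounds $a \le |u'|/|v'|$ using the lower bound $|v'| > |w'|/(2|w\times v|)$ from (\ref{ieq:cfc}). You instead take $a := v\times u'$ directly (which is $\pm$ the paper's $a$, so all downstream uses of ``strip'' are unaffected), and bound it via $|a| = \tfrac12|w'\times u' - w\times u'|$, estimating $|w\times u'| \le |w||u'|(\angle ww' + \angle w'u') < 2r\rho^{t_1'} + |w|/q'$. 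This avoids the detour through $v'$ in the bound on $|a|$ and is arguably cleaner. You also make explicit something the paper merely asserts: that $|u'|>|v|$, equivalently $u'\neq v$, hence $a\neq 0$. Your argument for this is sound (the convergent after $v$ has height $q' < \beta|w'| = |w|^{r-1}|w'| < |w'|^r$, so there is a convergent of $w'$ strictly between $v$ and the cutoff $|w'|^r$), though the intermediate step ``$|w|^{r-1}|w'| < |w'|^r$'' holds simply because $|w|<|w'|$, with no need for the cruder exponent comparison $2r-1 < r^2$ or the proviso ``for $|w|$ large enough.'' Two tiny polish notes: in step (a) you should record the strict inequality $|w'|>|w|^r$ (which follows from $|w'| = |w|+2|v| > 2|v| \ge 2|w|^r$) so the chain ends in the strict bound claimed; and in step (c) the final arithmetic $r\rho^{N'}+1 < 2\rho^{N'+1}$ rests on $\rho^{N'}(2\rho-r) = \rho^{N'}(r+1) > 1$, which is worth writing out. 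Neither affects correctness.
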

\begin{proof}
Write $w'=w+2v$ and recall that since $|w'\times v|=|w\times v|<1$ 
  (as in the proof of Lemma~\ref{lem:good}) $v$ is a convergent of $w'$.  
Let $v'$ be the next convergent of $w'$ after $v$.  
Since $|u'|>|v|$ we either have $u'=v'$ or $u'$ comes after $v'$ in the 
  continued fraction expansion of $w'$.  In any case, we have $u'=av'+bv$ 
  for some \emph{nonnegative} integers $a\ge b\ge0$ with $\gcd(a,b)=1$.  
Since $v\times v'=\pm1$ we have 
  $$|w'\times u'| = |(w\times u') + 2(v\times u')| = |(w\times u') \pm 2a|.$$  
On the other hand, $$|w'\times u'| 
    < \frac{|w'|}{q'}=\frac{1}{|w'|^{(r-1)t_2'}} < \frac{1}{|w|^{r(r-1)\Bar{t}_1'}}.$$  
This proves the first part.  

By the first inequality in (\ref{ieq:cfc}), $|v'|>\frac{|w'|}{2|w\times v|}$ so that 
  $$a<\frac{|u'|}{|v'|} < 2\alpha r\rho^{t_1'}|w\times v|<2\rho^{N'+1},$$ by 
  Lemma~\ref{lem:t_1'<N'} and since $r<\rho$.  
This proves the second part.  
\end{proof}

Suppose $w''\in\Delta(w,\alpha,\beta)$ also satisfies (\ref{def:child}) and  is 
  also not $(\alpha r)$-normal and satisfies $|w''|<q_{k'}^{1/r}$.  
Let $u''$ be the convergent of $w''$ with maximal height $|u''|\le|w''|^r$.  
Suppose further that it determines the same integer $a$ determined by 
  $u'$ as in Lemma~\ref{lem:strips}.  
Then we say $u'$ and $u''$ belong to the same \emph{strip}.  
The number of strips is bounded by the number of possible values for $a$.  
Thus, by Lemma~\ref{lem:strips}, the number of strips is bounded by 
\begin{equation}\label{bound:strips}
  4\rho^{N'+1}.  
\end{equation}

Now suppose $u',u''$ belong to the same strip.  We say $u'$ and $u''$ 
  lie in the same \emph{cluster} if they differ by a multiple of $u$.  
\begin{lemma}\label{lem:same:cluster}
If $|u''-u'|<|w|^r$ then they belong to the same cluster.  
\end{lemma}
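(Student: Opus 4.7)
The goal is to show $u\times(u'-u'')=0$: since $u$ is primitive in $\Z^2$, this forces $u'-u''\in\Z u$, i.e.\ $u'$ and $u''$ lie in the same cluster. First I would exploit the common-strip hypothesis. Applying Lemma~\ref{lem:strips} to both $u'$ and $u''$ with their common integer $a$ and subtracting the two resulting inequalities gives
\[
|w\times(u'-u'')|\le|(w\times u')+2a|+|(w\times u'')+2a|<\frac{2}{|w|^{r(r-1)}},
\]
using $\bar{t}_1',\bar{t}_1''\ge 1$.

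Next I would combine this with continued fraction information about $u$. By the choice of $u$ and the $\alpha$-normality of $w$ (Lemma~\ref{lem:t_1}), the height $q'$ of the next convergent of $w$ after $u$ satisfies $q'\ge|w|^r$, so the hypothesis $|u'-u''|<|w|^r$ gives $|u'-u''|<q'$. Assuming $|u'-u''|>0$, the best approximation property for continued fractions then yields $|w\times(u'-u'')|\ge|w\times u|$. Combined with the strip bound above and the lower estimate $|w\times u|>|w|/(2q')$ from Theorem~\ref{thm:CF1}, this gives the crucial upgrade
\[
q'>\frac{|w|^{r(r-1)+1}}{4}.
\]

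Finally I would use the identity $u=(|u|/|w|)w+(-(w\times u)/|w|,\,0)$ (whose second summand is horizontal) to expand
\[
u\times(u'-u'')=\frac{|u|}{|w|}\bigl(w\times(u'-u'')\bigr)-\frac{(w\times u)\,(u'-u'')_y}{|w|},
\]
and bound each term using $|u|<|w|^r$, the strip estimate, $|w\times u|<|w|/q'$, $|u'-u''|<|w|^r$, and the improved bound on $q'$ to obtain
\[
|u\times(u'-u'')|<\frac{2}{|w|^{(r-1)^2}}+\frac{4}{|w|^{(r-1)^2}}=\frac{6}{|w|^{(r-1)^2}},
\]
which is strictly less than $1$ once $|w|$ is sufficiently large (a standing hypothesis implicit in the construction via the choice of $|w_0|$). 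Since $u\times(u'-u'')$ is an integer it must vanish. The main obstacle is precisely this strict inequality: a naive estimate using only $q'\ge|w|^r$ yields $|u\times(u'-u'')|\le 1+o(1)$ and leaves open the case $u\times(u'-u'')=\pm 1$; the decisive step is the best approximation argument that upgrades $q'\ge|w|^r$ to $q'>|w|^{r(r-1)+1}/4$. The degenerate case $|u'-u''|=0$ is handled separately: the second term in the identity vanishes and the first alone yields $|u\times(u'-u'')|<2/|w|^{(r-1)^2}<1$, forcing $u\times(u'-u'')=0$ and hence $u'=u''$, since the only horizontal multiple of $u$ is zero.
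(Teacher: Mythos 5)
Your proof is correct, but it takes a genuinely different route from the paper's. The paper writes $u''-u'=d\bar u$ with $\bar u$ primitive, uses the strip bound together with the hypothesis $|u''-u'|<|w|^r$ to verify the $1/(2q^2)$ criterion (Theorem~\ref{thm:CF2}), deduces that $\bar u$ is a convergent of $w$ with $|\bar u|\le|u|$, and then rules out $|\bar u|<|u|$ by a short contradiction: a convergent preceding $u$ has $|w\times\bar u|>|w|/(2|u|)$, which combined with the strip bound forces $|u|>|w|^r$. You instead aim directly at the integer $u\times(u'-u'')$ via the decomposition of $u$ into a $w$-parallel and a horizontal part. The structural content is parallel: where the paper invokes Theorem~\ref{thm:CF2} to identify $\bar u$ as a convergent, you invoke the best-approximation-of-the-second-kind theorem (a Khinchin-type fact the paper does not state explicitly) to upgrade $q'\ge|w|^r$ to $q'>|w|^{r(r-1)+1}/4$; and where the paper contradicts the maximality of $u$, you squeeze $|u\times(u'-u'')|$ below $1$. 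What your version buys is bypassing the auxiliary vector $\bar u$; the cost is one extra (unreferenced but standard) continued-fraction ingredient and a slightly longer computation. One small point worth making explicit: your final bound $6/|w|^{(r-1)^2}<1$ requires $|w|^{(r-1)^2}>6$, which is slightly stronger than the $>5$ appearing in (\ref{ieq:w_0>5}); it does hold, since the chain of inequalities there actually gives $|w_0|^{(r-1)^2}\ge q_{k_0}^{M'/M^2}\ge q_{k_0}^3>125$, but "standing hypothesis implicit in the construction" is vague enough that a reader would want to see this checked.
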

\begin{proof}
Since $u',u''$ determine the same $a$, Lemma~\ref{lem:strips} and the 
  fact that $\bar{t}_1'\geq 1$ implies 
\begin{equation}\label{ieq:width:strip}
  |w\times(u''-u')|<\frac{2}{|w|^{r(r-1)}}
\end{equation}
  so that writing $u''-u'=d\Bar{u}$ where $d=\gcd(u''-u')$ we have 
  $$\angle w\Bar{u} = \frac{|w\times(u''-u')|}{|u''-u'||w|} 
        < \frac{2}{|u''-u'||w|^{r+(r-1)^2}} \le \frac{1}{2|u''-u'|^2},$$ 
  which implies $\Bar{u}$ is a convergent of $w$.  
Since $|\Bar{u}|\le|u''-u'|\le|w|^r$, we have $|\Bar{u}|\le|u|$, 
  by definition of $u$.  
Now suppose $|\Bar{u}|<|u|$.  We will arrive at a contradiction.  
Since $u$ is a convergent of $w$ coming after $\Bar{u}$, 
  $$|w\times\Bar{u}|>\frac{|w|}{2|u|}$$
  which together with (\ref{ieq:width:strip}) implies 
  $$\frac{d|w|}{2|u|} < \frac{2}{|w|^{r(r-1)}}$$ 
  so that $$|u|>\frac{d|w|^{r+(r-1)^2}}{4} \ge |w|^r,$$ 
  contradicting the definition of $u$.  
We conclude that $\Bar{u}=u$, so that $u',u''$ differ by a multiple of $u$.  
That is, they belong to the same cluster.  
\end{proof}

Pick a representative from each cluster.  To bound the number of 
  clusters we bound the number of representatives.  
Since $|u'|=\alpha r\rho^{t_1'}|w'| < 5\alpha\rho^{N'+1}|w|^r$ and the 
  difference in height of any two representatives is greater than $|w|^r$, 
  the number of clusters is bounded by (since $\alpha>1$) 
\begin{equation}\label{bound:clusters}
  5\alpha\rho^{N'+1}+1 \le 6\alpha\rho^{N'+1}.  
\end{equation}

To bound for the number of $u'$ in each cluster we need 
  an additional assumption.  
\begin{lemma}\label{lem:cluster:size}
Suppose $t_1'\ge t_1-1$ (independent of $u'$ within the cluster).  
Then the number of elements in the cluster is bounded by 
  $$5|w|^{(r-1)-(r-1)^2}.$$ 
\end{lemma}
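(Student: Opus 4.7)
My plan is to reduce the count to an integer-point count: any two elements $u', u''$ of the cluster differ by $u'' - u' = ku$ with $k \in \Z$ (using that $u$ is primitive in $\Z^2$, being a continued-fraction convergent of the inverse slope of $w$), so bounding $|k|$ uniformly bounds the cluster size.

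First I will apply Lemma~\ref{lem:strips} to both $u'$ and $u''$; because they lie in the same strip, the same integer $a$ serves for both. The lemma gives $|(w\times u') + 2a| < 1/|w|^{r(r-1)\bar{t}_1'}$ and similarly for $u''$. Under the standing hypothesis $t_1' \ge t_1 - 1$, both of the exponents $\bar{t}_1'$ that appear are at least $\tau := \max(t_1-1, 1)$, so the triangle inequality yields
$$|w\times(u''-u')| < \frac{2}{|w|^{r(r-1)\tau}}.$$

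Next I will lower-bound $|w\times u|$. The vector $u$ is the convergent of $w$ of maximal height below $|w|^r$, so the left-hand inequality of (\ref{ieq:CF1}) yields $|w\times u| \ge |w|/(2q)$, where $q = |w|^{1+(r-1)t_2}$ is the next convergent's height; using $t_2 \le t_1$ from Lemma~\ref{lem:t_1} I obtain the clean bound $|w\times u| \ge 1/(2|w|^{(r-1)t_1})$. Combining with the estimate above and $u'' - u' = ku$ then gives $|k| < 4|w|^{(r-1)(t_1 - r\tau)}$.

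The key step is a short case analysis on $t_1$ showing that the exponent $(r-1)(t_1 - r\tau)$ is always bounded above by $(r-1)(2-r) = (r-1) - (r-1)^2$. When $1 \le t_1 \le 2$ we have $\tau = 1$ and $t_1 - r\tau \le 2-r$; when $t_1 > 2$ we have $\tau = t_1 - 1$ and $t_1 - r\tau = r - t_1(r-1) \le r - 2(r-1) = 2-r$. In both cases $|k| < 4|w|^{(r-1)-(r-1)^2}$, and since the admissible $k$ form a subset of an integer interval of diameter less than $4|w|^{(r-1)-(r-1)^2}$, the cluster has at most $4|w|^{(r-1)-(r-1)^2} + 1 \le 5|w|^{(r-1)-(r-1)^2}$ elements, once $|w|$ is large enough that the latter bound exceeds one. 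I expect the main subtlety to be verifying the exponent bound uniformly across all $t_1 \ge 1$, in particular at the boundary $t_1 = 1$ where the exponent becomes negative and forces $|k| < 1$, which must be reconciled with the combinatorial counting (the cluster then consists of a single representative, consistent with the stated bound).
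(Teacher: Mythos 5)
Your proposal is correct and follows essentially the same route as the paper: both apply Lemma~\ref{lem:strips} to two cluster members to bound $|w\times(u''-u')|$, lower-bound $|w\times u|$ via the next convergent, and reduce to the elementary exponent inequality $t_1 - r\cdot(\text{exponent}) \le 2-r$ by a short case split. The only cosmetic difference is that you work with $\tau=\max(t_1-1,1)$ and split on $t_1\lessgtr 2$, whereas the paper works with $\bar{t}_1'=\max(t_1',1)$ and splits on $t_1'\lessgtr 1$; the two are equivalent under the hypothesis $t_1'\ge t_1-1$.
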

\begin{proof}
Lemma~\ref{lem:strips} implies for any $u',u''$ in the cluster 
  $$|w\times(u''-u')|\le\frac{2}{|w|^{r(r-1)\Bar{t}_1'}}$$ 
  where $\Bar{t}_1'$ is the smallest possible within the cluster.  
On the other hand, $$|w\times u| > \frac{|w|}{2q} 
    = \frac{1}{2|w|^{(r-1)t_2}}\ge\frac{1}{2|w|^{(r-1)t_1}}$$ 
  so that 
  $$\frac{|w\times(u''-u')|}{|w\times u|}<4|w|^{(r-1)(t_1-r\Bar{t}_1')}.$$
By definition  $u''-u'$ is a multiple of $u$.   To get the desired bound, 
  using the assumptions $1<r<2$ and $|w|^{r-1}\geq 1$, 
  it remains to show that $$t_1-r\Bar{t}'_1 \le 2-r.$$  
To see this note that if $t_1'>1$ then since $t_1'\ge t_1-1$ 
  $$t_1-rt_1' = (t_1-t_1')+(1-r)t_1'<2-r,$$
  whereas if $t_1'\le1$ then 
  $$t_1-r \le t_1'+1-r \le 2-r.$$
\end{proof}

We shall now apply our Lemmas to show that, under suitable hypotheses on an 
  $\alpha$-normal slit $w$ there are lots of children, i.e. $\alpha r$-normal slits 
  $w'$ satisfying (\ref{def:child}).  
\begin{proposition}\label{prop:normal}
Suppose $w$ is an $\alpha$-normal slit satisfying 
  $$q_{k+1}^{1/N}\le|w|<5|w|^r<q_{k'}^{1/r}$$ 
  where $k,k'$ are consecutive elements of $\ell_N$.  
Suppose further that 
\begin{equation}\label{ieq:normal}
  240\alpha^2\rho^{3N'+3} \le c_0|w|^{(r-1)^2}.  
\end{equation} 
Then the number of $w'$ satisfying (\ref{def:child}) that are 
  $\alpha r$-normal is at least 
  $$\frac{c_0|w|^{r-1}}{2\alpha\rho^{N'+1}}$$
\end{proposition}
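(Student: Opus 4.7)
The plan is to apply the counting bound of Lemma~\ref{lem:good} to obtain many candidate children $w'=w+2v$ satisfying (\ref{def:child}), then subtract the count of those that fail to be $\alpha r$-normal by organizing their top convergents into the strips and clusters of Lemmas~\ref{lem:strips}--\ref{lem:cluster:size}.

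Setting $\beta=|w|^{r-1}$, the hypothesis (\ref{ieq:normal}) forces $\alpha<c_0\beta$ (for $1<r<3$, since $(r-1)^2-2(r-1)=(r-1)(r-3)<0$), so Lemma~\ref{lem:good} produces at least $c_0|w|^{r-1}/\alpha$ slits in $\Delta(w,\alpha,\beta)$, each of which satisfies (\ref{def:child}) automatically. For each \emph{bad} candidate---one failing to be $\alpha r$-normal---the top convergent $u'$ of $w'$ with $|u'|<|w'|^r$ is controlled by Lemma~\ref{lem:t_1'<N'}, so by Lemma~\ref{lem:strips} it lies in one of at most $4\rho^{N'+1}$ strips indexed by the integer $a$; within each strip, Lemma~\ref{lem:same:cluster} makes distinct cluster representatives differ in height by at least $|w|^r$, and the bound $|u'|\le 5\alpha\rho^{N'+1}|w|^r$ gives at most $6\alpha\rho^{N'+1}$ clusters per strip as in (\ref{bound:clusters}); finally Lemma~\ref{lem:cluster:size} caps each cluster at $5|w|^{(r-1)-(r-1)^2}$. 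Multiplying the three bounds, the total number of bad candidates is at most
\[
120\,\alpha\,\rho^{2N'+2}\,|w|^{(r-1)-(r-1)^2}.
\]

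The hypothesis (\ref{ieq:normal}) is calibrated exactly for this count. Indeed, rearranging $240\alpha^2\rho^{3N'+3}\le c_0|w|^{(r-1)^2}$ gives
\[
120\,\alpha\,\rho^{2N'+2}\le \frac{c_0\,|w|^{(r-1)^2}}{2\alpha\rho^{N'+1}},
\]
and multiplying through by $|w|^{(r-1)-(r-1)^2}$ shows the bad count is at most $\frac{c_0|w|^{r-1}}{2\alpha\rho^{N'+1}}$. Since this is at most half of the total $c_0|w|^{r-1}/\alpha$, subtracting leaves at least $\frac{c_0|w|^{r-1}}{\alpha}-\frac{c_0|w|^{r-1}}{2\alpha\rho^{N'+1}}\ge\frac{c_0|w|^{r-1}}{2\alpha\rho^{N'+1}}$ genuine $\alpha r$-normal children, as required.

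The main obstacle is the uniform applicability of Lemma~\ref{lem:cluster:size}, whose hypothesis $t_1'\ge t_1-1$ is not automatic: if the parent $w$ has $t_1$ large, then the construction's own loop (giving $u'=v$, hence $t_1'\approx 0$) generates a cluster in which the estimate $t_1-r\bar t_1'\le 2-r$ used in the proof of Lemma~\ref{lem:cluster:size} fails. One must therefore either argue that such anomalous ``low'' clusters are few in number, or show that the corresponding $w'$ has such an abnormal continued fraction gap that it is excluded from the bad count via the range condition $|w|\ge q_{k+1}^{1/N}$ together with the fact that $k,k'$ are consecutive elements of $\ell_N$. Once this uniformity is secured, the remainder of the proof is the bookkeeping above, calibrated precisely by (\ref{ieq:normal}).
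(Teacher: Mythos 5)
You have correctly identified the gap: the hypothesis $t_1'\ge t_1-1$ of Lemma~\ref{lem:cluster:size} is not automatic when Lemma~\ref{lem:good} is invoked with the bare pair $(\alpha,\beta)$, since that only gives each child as $(\alpha-\tfrac12,|w'|^{r-1})$-good, hence a lower bound on $t_1'$ that stays near a constant while $t_1$ may be large. However, the repairs you suggest (excluding anomalous clusters via the range condition, or arguing that such clusters are few) are not how the paper proceeds, and neither appears to work directly.

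The missing idea is to use the full strength of $\alpha$-normality of $w$ to sharpen the invocation of Lemma~\ref{lem:good}. By Lemma~\ref{lem:t_1}, $w$ is $(\alpha\rho^{t_1},|w|^{r-1})$-good for some $t_1\ge1$, and the paper splits into two cases. If $t_1\ge N'+1$, apply Lemma~\ref{lem:good} with $\alpha\rho^{N'+1}$ in place of $\alpha$: this still yields at least $\frac{c_0|w|^{r-1}}{\alpha\rho^{N'+1}}$ slits satisfying (\ref{def:child}), and by Lemma~\ref{lem:good:children} each is $(\alpha\rho^{N'+1}-\tfrac12,|w'|^{r-1})$-good, hence $(\alpha r\rho^{N'},|w'|^{r-1})$-good (as $\alpha\rho^{N'+1}-\tfrac12>\alpha r\rho^{N'}$ by the choice $\rho=r+\tfrac12$), and therefore $\alpha r$-normal by Lemma~\ref{lem:N'-good=>normal}; no strip/cluster subtraction is needed at all. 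If $t_1<N'+1$, apply Lemma~\ref{lem:good} with $\alpha\rho^{t_1}$: you still get more than $\frac{c_0|w|^{r-1}}{\alpha\rho^{N'+1}}$ candidates, and Lemma~\ref{lem:good:children} now gives each child as $(\alpha\rho^{t_1}-\tfrac12,|w'|^{r-1})$-good, hence $(\alpha r\rho^{t_1-1},|w'|^{r-1})$-good, which is exactly the bound $t_1'\ge t_1-1$ that Lemma~\ref{lem:cluster:size} requires, uniformly for all candidates. In short, the resolution is not to sift out bad clusters after the fact but to tighten the cross-product threshold in Lemma~\ref{lem:good} to $1/(\alpha\rho^{t_1})$ rather than $1/\alpha$, so that the children inherit enough goodness from the outset; your bookkeeping with the product $4\rho^{N'+1}\cdot 6\alpha\rho^{N'+1}\cdot 5|w|^{(r-1)-(r-1)^2}$ and the calibration by (\ref{ieq:normal}) is otherwise identical to the paper's second case.
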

\begin{proof}
Let $t_1$ be the parameter associated to the convergent $u$ of $w$ 
  as in (\ref{lem:t_1}).  There are two cases.  
If $t_1\ge N'+1$ then $w$ is $(\alpha\rho^{N'+1},|w|^{r-1})$-good, 
  so that Lemma~\ref{lem:good} implies $w$ has at least 
\begin{equation}\label{double}
  \frac{c_0|w|^{r-1}}{\alpha\rho^{N'+1}}
\end{equation} 
  $w'=w+2v$ satisfying (\ref{def:child}).  Moreover, by Lemma~\ref{lem:good:children} 
  each $w'$ constructed is $(\alpha\rho^{N'+1}-\frac12,|w'|^{r-1})$-good.  
Since $$\alpha\rho^{N'+1}-\frac12>\alpha r\rho^{N'},$$ by the choice 
  of $\rho$, every such $w'$ is $(\alpha r\rho^{N'},|w'|^{r-1})$-good. 
  
Moreover, since each $w'$  has length at most $5|w|^r$, 
  Lemma~\ref{lem:N'-good=>normal} implies each $w'$ 
  constructed is $\alpha r$-normal.  
Note that the number in (\ref{double}) is twice as many as we need.  

Now consider the case $t_1<N'+1$.  
In this case $w$ is $(\alpha\rho^{t_1},|w|^{r-1})$-good, so that 
  Lemma~\ref{lem:good} implies $w$ has at least 
  $$\frac{c_0|w|^{r-1}}{\alpha\rho^{t_1}}>\frac{c_0|w|^{r-1}}{\alpha\rho^{N'+1}}$$ 
  $w'$ satisfying  (\ref{def:child}).   Moreover, Lemma~\ref{lem:good:children} implies 
  each child $w'$ constructed is $(\alpha\rho^{t_1}-\frac12,|w'|^{r-1})$-good, 
  and since $$\alpha\rho^{t_1}-\frac12>\alpha r\rho^{t_1-1},$$ again, by the 
  choice of $\rho$, this means $w'$ is $(\alpha r\rho^{t_1-1},|w'|^{r-1})$-good.  

Moreover, the parameter $t_1'$ associated to the convergent $u'$ of 
  each such $w'$ satisfies $t_1'\ge t_1-1$.  
Applying Lemmas~\ref{lem:strips}, \ref{lem:same:cluster} and \ref{lem:cluster:size} 
  we conclude the number of $w'$  constructed that are not $\alpha r$-normal 
  is at most the product of the bounds given in (\ref{bound:strips}), (\ref{bound:clusters}), 
  and Lemma~ (\ref{lem:cluster:size}), i.e. $$120\alpha\rho^{2N'+2}|w|^{(r-1)-(r-1)^2},$$ 
  which is at most half the amount in (\ref{double}) since  (\ref{ieq:normal}) holds.  
\end{proof}

\section{Choice of initial parameters}\label{s:Init}
In this section we specify some parameters that need to be 
  fixed before the construction of the tree of slits can begin.  
In particular, we shall specify the initial slit.  
We shall also specify the type of construction that will be 
  used at each level to find the slits of the next level.  

\subsection{Choice of initial slit}\label{ss:w_0}
Given $\eps>0$ we first choose $1<r<2$ so that 
  $$\frac{1}{1+r}>\frac{1}{2}-\eps$$ 
  then choose $\delta>0$ so that 
\begin{equation}\label{eq:local}
  \frac{1-\delta}{1+r+2\delta}>\frac{1}{2}-\eps.  
\end{equation}
It will be convenient to set $$M:=\frac{1}{r-1}>1$$ and let 
\begin{equation}\label{def:M'}
  M'=\max(3M^2,Mr/\delta).  
\end{equation}
We set 
\begin{equation}\label{def:N}
  N=M'r^5
\end{equation}
 and let $N'$ be given by (\ref{def:N'}).  

We assume that $\ell_N$, which was defined in (\ref{def:ell_N}), has 
  infinitely many elements, for if $\ell_N$ were finite, then $\lambda$ 
  is Diophantine and this case has already been dealt with in \cite{Ch1}.  
Our argument would simplify considerably if we assume $\ell_N$ is 
  finite and it would essentially reduce to the one given in \cite{Ch1}.  

Now choose $k_0\in\ell_N$ large enough so that 
\begin{equation}\label{def:k_0}
  q_{k_0} > \max\left(5^M,60c_0^{-1}\rho^{N'+3},2\rho^{N'}(\log_r(M')+4),2^7\rho{N'}\right).  
\end{equation}

\begin{lemma}\label{lem:init:slit}
There is a slit $w_0\in V_2^+$ such that $d(w_0,k_0)\le2$ and 
\begin{equation}\label{ieq:init:slit}
   q_{k_0}^{M'} \le |w_0| < q_{k_0}^{M'r}.  
\end{equation}
\end{lemma}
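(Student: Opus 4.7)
The plan is to exhibit $w_0 = (\lambda + 2m, 2n)$ explicitly by choosing $n$ to be a power of $2$ in a suitable range and selecting $m$ using parity considerations on $p_{k_0}$.

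First I would record the basic structural observation about the Liouville convergent index $d$. If $w_0 = (\lambda + 2m, 2n) \in V_2^+$, then by definition
\begin{equation*}
  d(w_0, k_0) = \gcd(p_{k_0} + 2m q_{k_0}, \, 2n q_{k_0}).
\end{equation*}
Since $\gcd(p_{k_0}, q_{k_0}) = 1$, any common divisor of $p_{k_0} + 2m q_{k_0}$ and $q_{k_0}$ divides $p_{k_0}$, hence equals $1$. Therefore $d(w_0, k_0)$ is coprime to $q_{k_0}$, and together with $d(w_0, k_0) \mid 2nq_{k_0}$ this gives $d(w_0, k_0) \mid 2n$. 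In particular, controlling $d$ reduces to controlling the $2$-adic part of $p_{k_0} + 2m q_{k_0}$ once $n$ is chosen to be a power of $2$.

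Next I would choose $n$. The required length range $q_{k_0}^{M'} \le |w_0| < q_{k_0}^{M'r}$ with $|w_0| = 2n$ amounts to $n \in [q_{k_0}^{M'}/2, q_{k_0}^{M'r}/2)$. The ratio of the endpoints is $q_{k_0}^{M'(r-1)}$, and since $M' \ge 3M^2$ with $M(r-1)=1$ we have $M'(r-1) \ge 3M$, so
\begin{equation*}
  q_{k_0}^{M'(r-1)} \;\ge\; q_{k_0}^{3M} \;\ge\; 5^{3M^2} \;\ge\; 125
\end{equation*}
using $q_{k_0} > 5^M$ from (\ref{def:k_0}). In particular this ratio exceeds $2$, so the interval contains a power of $2$; pick $n = 2^j$ to be any such power. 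Then $2n = 2^{j+1}$, and by the structural remark above $d(w_0,k_0)$ is a power of $2$ dividing $2^{j+1}$.

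Finally I would choose $m$ to force $d(w_0, k_0) \le 2$ by a parity split. If $p_{k_0}$ is odd, any $m$ makes $p_{k_0} + 2mq_{k_0}$ odd, so $d(w_0, k_0) = 1$. If $p_{k_0}$ is even, then $q_{k_0}$ is odd (coprimality), and $p_{k_0} + 2m q_{k_0} \equiv p_{k_0} + 2m \pmod{4}$; choose $m$ odd when $p_{k_0} \equiv 0 \pmod 4$ and $m$ even when $p_{k_0} \equiv 2 \pmod 4$, so that $p_{k_0} + 2m q_{k_0} \equiv 2 \pmod 4$, yielding $d(w_0,k_0) = 2$. In either case $w_0 = (\lambda + 2m, 2n)$ lies in $V_2^+$, satisfies $d(w_0, k_0) \le 2$, and has $|w_0| = 2^{j+1}$ in the required range. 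There is no serious obstacle here: the only point requiring care is the gcd/parity analysis, and everything else is a direct verification using the lower bound on $q_{k_0}$.
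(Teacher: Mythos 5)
Your proof is correct but takes a genuinely different route from the paper's. The paper starts from any short slit $w$ with $|w|<q_{k_0}/2$, applies the Liouville construction of \S\ref{s:Liouville} to pick $w_0\in\Lambda_1(w,k_0)$ of minimal height at least $q_{k_0}^{M'}$, invokes Lemma~\ref{lem:gcd} to obtain $d(w_0,k_0)\le2$, and then bounds $|w_0|$ by noting that consecutive elements of $\Lambda_1(w,k_0)$ differ in height by $2|u|\le q_{k_0}^2$. Your argument sidesteps all of that machinery: by taking $n$ a power of $2$ you collapse $d(w_0,k_0)=\gcd(p_{k_0}+2mq_{k_0},\,2nq_{k_0})$ to a $2$-adic valuation (after using $\gcd(p_{k_0},q_{k_0})=1$ to strip the $q_{k_0}$ factor), and then a one-line parity choice of $m$ pins $d$ to $1$ or $2$; the length constraint is met because the interval $[q_{k_0}^{M'},q_{k_0}^{M'r})$ has ratio $q_{k_0}^{M'/M}\ge q_{k_0}^{3M}>125>2$ and hence contains a power of $2$. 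Your version is more elementary and entirely self-contained (no appeal to Lemma~\ref{lem:gcd} or \S\ref{s:Liouville}), which is a genuine simplification for this particular lemma; the paper's version has the advantage of exhibiting $w_0$ as the output of the same $\Lambda_1$ mechanism that is iterated throughout the tree construction, so the initialization and the inductive step are treated uniformly. One cosmetic remark: you should say explicitly that you need $M>1$ to get $5^{3M^2}\ge125$, but you do use it implicitly and the inequality is only needed to exceed $2$, so the slack is ample.
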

\begin{proof}
Let $w\in V_2^+$ be any slit such that $|w|<q_{k_0}/2$.  
Choose $w_0\in\Lambda_1(w,k_0)$ with minimal height 
  satisfying the first inequality in (\ref{ieq:init:slit}).  
Lemma~\ref{lem:gcd} implies $d(w_0,k_0)\le2$.  
Let $u$ be the Liouville convergent of $w$ indexed by $k_0$.  
Its height $|u|\le q_{k_0}|w|\le q_{k_0}^2/2$.  Since consecutive 
  elements in $\Lambda_1(w,k_0)$ differ by $2u$, we have 
  $$|w_0|<q_{k_0}^{M'}+2|u|\le q_{k_0}^{M'}+q_{k_0}^2<q_{k_0}^{M'r}$$ 
  since $M'>2M$.  
\end{proof}

Choose $w_0$ satisfying the conditions of Lemma~\ref{lem:init:slit} 
  and let it be fixed for the rest of this paper.  
It is the unique slit of level $0$.  

Note that the choice of $k_0$ in (\ref{def:k_0}) gives various lower bounds 
  on the length of $w_0$, by virtue of the first inequality in (\ref{ieq:init:slit}).  
For example, since $M'>M$, the first relation in (\ref{def:k_0}) implies 
\begin{equation}\label{ieq:w_0>5} 
  |w_0|^{(r-1)^2} \ge q_{k_0}^{M'/M^2 }> q_{k_0}^{1/M} > 5.  
\end{equation}

\subsection{Choice of indices}\label{ss:indices}
Next, we shall specify for each level $j\ge0$ the type of construction that 
  will be applied to the slits of level $j$ to construct slits of the next level.  
(The same type of construction will be applied to all slits within the same level.)  
We shall define indices $j^A_k$ for each $k\in\ell_N$ with $k\ge k_0$ and 
  for $A\in\{B,C,D\}$ such that whenever $k<k'$ are consecutive elements 
  of $\ell_N$ we have (see Lemma~\ref{lem:indices}(i) below) 
  $$j^B_k < j^C_k < j^D_k < j^B_{k'}.$$  
For $j^C_k\le j < j^D_k$ we use the construction described in \S\ref{s:Liouville}, 
  while for all other $j$ we use the techniques described in \S\ref{s:Diophantine}.  
The precise manner in which these types of constructions will be applied is 
  described in the next subsection.  

The primary role of these indices is to ensure that various conditions on the lengths 
  of all slits in some particular level are satisfied.  (See Lemma~\ref{lem:range}.)  
Specifically, the conditions in Lemmas~\ref{lem:gcd} and \ref{lem:liouville} are needed 
  for the levels $j^C_k\le j\le j^D_k$ and 
those in Proposition~\ref{prop:normal} are 
  needed for the levels $j^D_k\le j\le j^B_{k'}$.  
It will also be important that the number of levels between $j^B_k$ and $j^C_k$ be 
  bounded (Lemma~\ref{lem:indices}.ii) whereas the number between $j^C_k$ and $j^D_k$ 
  (or between $j^D_k$ and $j^B_{k'}$) will generally not be bounded.  

Let $H_0=\{|w_0|\}$ and for $j>0$ set 
  $$H_j=\left[|w_0|^{r^j},5^{\frac{r^j-1}{r-1}}|w_0|^{r^j}\right]$$ 
  so that the lengths of all slits of level $j$ lie in $H_j$, by  (\ref{ieq:|w_{j+1}|}).  
\begin{lemma}\label{lem:H_j}
 For all $j\ge0$ 
\begin{equation}\label{ieq:H_j}
  \sup H_j < \inf H_{j+1} = (\inf H_j)^r.
\end{equation}
\end{lemma}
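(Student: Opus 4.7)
The plan is to verify both parts of the statement by direct computation from the definition of $H_j$, using the lower bound $|w_0|^{(r-1)^2}>5$ already established in (\ref{ieq:w_0>5}) as the only non-trivial input.

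First, for the equality $\inf H_{j+1}=(\inf H_j)^r$: reading off the definition, $\inf H_j=|w_0|^{r^j}$ for all $j\ge0$ (the case $j=0$ being consistent since $\frac{r^0-1}{r-1}=0$ makes the defining interval degenerate to $\{|w_0|\}$). Then $(\inf H_j)^r=|w_0|^{r\cdot r^j}=|w_0|^{r^{j+1}}=\inf H_{j+1}$, which is immediate.

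Next, for the strict inequality $\sup H_j<\inf H_{j+1}$: substituting $\sup H_j=5^{(r^j-1)/(r-1)}|w_0|^{r^j}$ and $\inf H_{j+1}=|w_0|^{r^{j+1}}$, dividing through by $|w_0|^{r^j}$ reduces the claim to
$$5^{(r^j-1)/(r-1)}<|w_0|^{r^j(r-1)}.$$
I would bound the exponent on the left by the trivial estimate $\frac{r^j-1}{r-1}<\frac{r^j}{r-1}$, after which it suffices to verify
$$5^{r^j/(r-1)}\le|w_0|^{r^j(r-1)},$$
or equivalently, after taking the $r^j$-th root, $5^{1/(r-1)}\le|w_0|^{r-1}$, i.e.\ $5\le|w_0|^{(r-1)^2}$. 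This is exactly the content of (\ref{ieq:w_0>5}), which was already derived from the choice of $k_0$ in (\ref{def:k_0}) together with (\ref{ieq:init:slit}).

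There is no real obstacle here; the statement is essentially a bookkeeping lemma asserting that the intervals $H_j$ are nested gaplessly in the sense that consecutive ranges do not overlap, and the whole content is absorbed into the single inequality $|w_0|^{(r-1)^2}>5$ on the initial slit. The main point to flag is that one must check the case $j=0$ separately (or verify that the formula for $H_j$ degenerates consistently at $j=0$), but in either interpretation the required inequality $|w_0|<|w_0|^r$ is immediate from $|w_0|>5>1$.
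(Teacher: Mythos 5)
Your proof is correct and follows essentially the same route as the paper: both reduce the desired inequality $\sup H_j<\inf H_{j+1}$, after dividing by $|w_0|^{r^j}$, to the single condition $|w_0|^{(r-1)^2}\ge 5$, which is exactly (\ref{ieq:w_0>5}). The paper phrases the intermediate step as $5^{r^j}<|w_0|^{(r-1)^2r^j}$ rather than taking the $r^j$-th root, but the computation is the same; your handling of the $j=0$ degeneracy and the observation that the equality $\inf H_{j+1}=(\inf H_j)^r$ is immediate from the definition are fine additions the paper leaves implicit.
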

\begin{proof}
The condition $\sup H_j<\inf H_{j+1}$ is equivalent to $$5^{\frac{r^j-1}{r-1}}<|w_0|^{r^j(r-1)},$$ 
  which is implied by $$5^{r^j}<|w_0|^{(r-1)^2r^j},$$ which in turn is implied by (\ref{ieq:w_0>5}).  
\end{proof}

The choice of the indices $j^A_k$ will depend on the position of $H_j$ relative to 
  that of the following intervals: 
  $$I^C_k=\left[q_k^{M'},q_{k+1}^{1/r}\right),\qquad\text{and}\qquad 
      I^D_k=\left[q_{k+1}^{1/r^5},q_{k'}^{1/r}\right).$$  
Here, again, $k'$ is the element in $\ell_N$ immediately after $k$.  
These intervals overlap nontrivially and the overlap cannot be too small in 
  the sense that there are at least three consecutive $H_j$'s contained in it.  
\begin{lemma}\label{lem:IKCD3}
 For any $k\in\ell_N$ with $k\ge k_0$ 
\begin{equation}\label{ieq:IKCD3}
  \#\{j: H_j \subset I^C_k\cap I^D_k\}\ge3.  
\end{equation}
\end{lemma}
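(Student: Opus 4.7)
The plan is to translate the containment $H_j\subset I^C_k\cap I^D_k$ into a bounded range of admissible $r^j$ values, and then count integers $j\ge 0$ in that range, exploiting the fact that the choice $N=M'r^5$ makes the range length exactly $3$ in $\log_r$-scale.

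First I would identify the intersection. Since $k\in\ell_N$ gives $q_{k+1}>q_k^N = q_k^{M'r^5}$, it follows that $q_{k+1}^{1/r^5}>q_k^{M'}$, so the lower endpoint of $I^C_k\cap I^D_k$ is $q_{k+1}^{1/r^5}$. Since $k'>k$ forces $q_{k'}\ge q_{k+1}$, the upper endpoint is $q_{k+1}^{1/r}$. Hence
\[
  I^C_k\cap I^D_k = \bigl[q_{k+1}^{1/r^5},\,q_{k+1}^{1/r}\bigr).
\]

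Next I would translate $H_j\subset I^C_k\cap I^D_k$ into bounds on $r^j$. The condition $\inf H_j = |w_0|^{r^j}\ge q_{k+1}^{1/r^5}$ is equivalent to $r^j\ge L_0:=\log q_{k+1}/(r^5\log|w_0|)$. For the upper bound, Lemma~\ref{lem:H_j} gives $\sup H_j<|w_0|^{r^{j+1}}$, so it suffices to impose $|w_0|^{r^{j+1}}\le q_{k+1}^{1/r}$, which reads $r^j\le U_0:=\log q_{k+1}/(r^2\log|w_0|)$. The crucial observation is $U_0/L_0=r^3$ exactly, so $[\log_r L_0,\log_r U_0]$ is a closed interval of length $3$, and therefore contains at least $3$ integers.

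The one remaining subtle point, which I view as the main thing to verify carefully, is ensuring that at least three of these integers are nonnegative. Using $|w_0|\le q_{k_0}^{M'r}$ from Lemma~\ref{lem:init:slit}, together with $k\ge k_0$ and $k\in\ell_N$ so $q_{k+1}>q_k^N\ge q_{k_0}^{M'r^5}$, one estimates
\[
  L_0 \;>\; \frac{M'r^5\log q_{k_0}}{r^5\cdot M'r\log q_{k_0}} \;=\; \frac{1}{r},
\]
so $\log_r L_0>-1$. A closed interval of length $3$ whose left endpoint exceeds $-1$ always contains at least three non-negative integers (the three smallest candidates $0,1,2$ lie in it when $\log_r L_0\in(-1,0)$, and when $\log_r L_0\ge 0$ the interval is entirely in $[0,\infty)$ and still has length $3$), which yields the desired lower bound of $3$.
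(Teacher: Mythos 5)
Your proof is correct and follows essentially the same route as the paper's: both identify $I^C_k\cap I^D_k=[q_{k+1}^{1/r^5},q_{k+1}^{1/r})$, convert the containment $H_j\subset I^C_k\cap I^D_k$ into a bound of length $3$ on $j$ in $\log_r\log_{|w_0|}$ coordinates (your $L_0,U_0$ are exactly $f(q^a)$ and $f(q^b)-1$ for the paper's $f$), and use $|w_0|<q_{k_0}^{M'r}$ together with $q_{k+1}>q_k^N\ge q_{k_0}^{M'r^5}$ to guarantee the range starts above $-1$ so the three integers are nonnegative.
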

\begin{proof}
Note that $f(x)=\log_r\log_{|w_0|}(x)$ sends $x=\inf H_j$ to 
  a nonnegative integer and 
  $$f(q^a) = \frac{\log a+\log\log q - \log\log|w_0|}{\log r}.$$  
For any $q$ the image of $[q^a,q^b)$ under $f$ contains exactly 
  $\lfloor\log_r(b/a)\rfloor$ integers, all of them nonnegative 
  if $f(q^a)>-1$; or equivalently, if $|w_0|<q^{ar}$.  
Under this condition, the fact in  Lemma~\ref{lem:H_j} that $\inf H_{j+1}=(\inf H_j)^r$ implies 
  $$\#\{j\ge0: H_j\subset\left[q^a,q^b\right)\} 
       \ge \left\lfloor\log_r(b/a)\right\rfloor -1.$$ 
Since  $N\ge M'r^5$ and $q_{k'}\ge q_{k+1}>q_k^N$, we have 
  $$I^C_k\cap I^D_k = \left[q_{k+1}^{1/r^5},q_{k+1}^{1/r}\right)$$ 
  and since $q_{k+1}^{1/r^4}>q_k^{N/r^4}\ge q_k^{M'r}>|w_0|$, 
  (\ref{ieq:IKCD3}) follows.  
\end{proof}

By virtue of the fact that the quantity in (\ref{ieq:IKCD3}) is at least one, 
  we can now give two equivalent definitions of the index $j^A_k$.  
\begin{definition}
For $k<k'$ consecutive elements of $\ell_N$ with $k\ge k_0$, let 
\begin{align*}
  j^C_k &= \min\{j:H_j\subset I^C_k\} = \min\{j:\inf H_j\ge q_k^{M'}\}\\
  j^D_k &= \max\{j:H_{j+1}\subset I^C_k\} = \max\{j:\sup H_j<q_{k+1}^{1/r}\}\\
  j^B_{k'} &= \max\{j:H_j\subset I^D_k\} = \max\{j:\sup H_j<q_{k'}^{1/r}\} 
\end{align*}  
\end{definition}
Note that $j^C_{k_0}=0$ and that $j^B_{k_0}$ is not defined.  

The main facts about these indices are expressed in the next two lemmas.  
\begin{lemma}\label{lem:indices}
For any $k\in \ell_N$, $k\ge k_0$ 
\begin{enumerate}
   \item[(i)] $j^B_k < j^C_k < j^D_k \leq j^B_{k'}$
  \item[(ii)] $j^C_k\le j^B_k + \log_r(M')+4.$  
\end{enumerate}
\end{lemma}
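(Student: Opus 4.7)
The plan is to verify the three inequalities in part (i) and the bound in part (ii) one at a time, with each reducing to a comparison of exponents of $|w_0|$ and $q_k$ via the relation $\sup H_j < \inf H_{j+1} = |w_0|^{r^{j+1}}$ from Lemma~\ref{lem:H_j}.

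For part (i), I would first observe that since $M' \ge 3M^2 \ge 3$ by (\ref{def:M'}), we have $q_k^{M'} > q_k^{1/r}$. The maximality defining $j^B_k$ gives $\sup H_{j^B_k} < q_k^{1/r} < q_k^{M'}$, so $\inf H_{j^B_k} < q_k^{M'}$ and $j^B_k$ fails the condition defining $j^C_k$; hence $j^C_k > j^B_k$. The middle inequality $j^C_k < j^D_k$ is where Lemma~\ref{lem:IKCD3} enters: since $\{j : H_j \subset I^C_k \cap I^D_k\}$ is a contiguous set of integers (the defining inequalities are monotone in $j$), having at least three elements forces three consecutive indices $j_1,\,j_1+1,\,j_1+2$ to lie in it. All three of the corresponding $H_{j_i}$ sit inside $I^C_k$, so $j^C_k \le j_1$, while $H_{j_1+2}\subset I^C_k$ gives $j^D_k \ge j_1+1$. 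Finally, for $j^D_k \le j^B_{k'}$: since $k,k'$ are consecutive in $\ell_N$ with $k<k'$, we have $k+1\le k'$, so $q_{k+1}\le q_{k'}$, and therefore $\sup H_{j^D_k} < q_{k+1}^{1/r} \le q_{k'}^{1/r}$, showing that $j^D_k$ meets the defining condition for $j^B_{k'}$.

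For part (ii), I would extract from the maximality of $j^B_k$ that $\sup H_{j^B_k+1} \ge q_k^{1/r}$, combine with $\sup H_j < |w_0|^{r^{j+1}}$ to obtain $q_k^{1/r} \le |w_0|^{r^{j^B_k+2}}$, and rearrange to get $q_k^{M'} < |w_0|^{M' r^{j^B_k+3}}$. Choosing $s=\lceil \log_r M'\rceil$ so that $r^s \ge M'$ yields $\inf H_{j^B_k+3+s} = |w_0|^{r^{j^B_k+3+s}} \ge |w_0|^{M' r^{j^B_k+3}} > q_k^{M'}$, so $j^C_k \le j^B_k+3+s \le j^B_k + 4 + \log_r M'$, as required.

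The entire argument is essentially exponent bookkeeping; there is no genuine obstacle. The only step requiring a brief justification beyond direct substitution is the contiguity of the index set $\{j : H_j\subset I^C_k\cap I^D_k\}$, which is what upgrades the count in Lemma~\ref{lem:IKCD3} to the existence of three \emph{consecutive} indices and thereby strictly separates $j^C_k$ from $j^D_k$.
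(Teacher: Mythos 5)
Your proof is correct and takes essentially the same approach as the paper's: comparing exponents of $|w_0|$ against those of $q_k$ via Lemma~\ref{lem:H_j}, invoking Lemma~\ref{lem:IKCD3} to separate $j^C_k$ from $j^D_k$, and using $q_{k+1}\le q_{k'}$ for the third inequality. Your explicit contiguity argument (upgrading the cardinality bound of Lemma~\ref{lem:IKCD3} to three consecutive indices) and your slightly tidier bookkeeping with $s=\lceil\log_r M'\rceil$ in part (ii) fill in small details the paper's proof leaves implicit or handles a bit loosely.
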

\begin{proof}
For (i) we note that 
  $$\inf H_{j_k^B} \le \sup H_{j_k^B} \le q_k^{1/r} < q_k^{M'}$$ 
  so the first inequality follows by the (second) definition of $j_k^C$.  
From the first definitions of $j^C_k$ and $j^D_k$, we see that the 
  second inequality is a consequence of Lemma~\ref{lem:IKCD3}.  
The third inequality follows by comparing the second definitions 
  of $j^D_k$ and $j^B_{k'}$ and noting that $q_{k'}\ge q_{k+1}$.  

For (ii) first note that 
  $$\inf H_{j^B_k} = \left(\inf H_{j^B_k+1}\right)^{1/r} 
       \ge\left(\sup H_{j^B_k+1}\right)^{1/r^2} \ge q_k^{1/r^3}$$ 
  by Lemma~\ref{lem:H_j} and the second definition of $j^B_k$.  
Thus, we have 
  $$\inf H_{j^B_k+n} = \left(\inf H_{j^B_k}\right)^{r^n} 
        \ge q_k^{r^{n-3}} \ge q_k^{M'}$$ 
  where $n=\lceil\log_r(M')+4\rceil$.  
The second definition of $j^C_k$ now implies 
  $j^C_k\le j^B_k+n\le\log_r(M')+4$.  
\end{proof}

\begin{lemma}\label{lem:range}
For any slit $w$ of level $j$ we have 
\begin{enumerate}
  \item[(i)] $\ds \quad j^C_k \le j \le j^D_k \impl |w|\in I^C_k \impl q_k^M\le|w|<\frac{q_{k+1}}{2q_k}$
  \item[(ii)] $\ds \quad j^D_k \le j \le j^B_{k'} \impl |w|\in I^D_k \impl q_{k+1}^{1/N'}\le|w|<q_{k'}^{1/r}.$
\end{enumerate}
\end{lemma}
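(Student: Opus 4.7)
The plan is to work directly from the second (and simpler) forms of the definitions of $j^C_k$, $j^D_k$, and $j^B_{k'}$, use Lemma~\ref{lem:H_j} to translate these into bounds on $\inf H_j$ and $\sup H_j$, and then verify the resulting arithmetic inequalities using the explicit choices of $M'$, $N$, $N'$, and $k_0$ fixed in \S\ref{s:Init}.

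For part (i), the hypothesis $j \ge j^C_k$ together with the monotonicity of $j \mapsto \inf H_j$ (which follows from Lemma~\ref{lem:H_j}, since $\inf H_{j+1}=(\inf H_j)^r$) gives $|w| \ge \inf H_j \ge \inf H_{j^C_k} \ge q_k^{M'}$, while $j \le j^D_k$ gives $|w| \le \sup H_j < q_{k+1}^{1/r}$. Thus $|w| \in [q_k^{M'}, q_{k+1}^{1/r}) = I^C_k$. The lower bound $q_k^M \le |w|$ of the final implication is immediate, since $M' \ge M$ (indeed $M' \ge 3M^2 > M$ because $M>1$). For the upper bound $|w| < q_{k+1}/(2q_k)$, it suffices to check $q_{k+1}^{1/r} \le q_{k+1}/(2q_k)$, i.e. $q_{k+1}^{(r-1)/r} \ge 2q_k$. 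Using $k \in \ell_N$ (so $q_{k+1} > q_k^N = q_k^{M'r^5}$) and $(r-1)/r = 1/(Mr)$, this reduces to $q_k^{M'r^4/M - 1} \ge 2$, which holds because $q_k \ge q_{k_0} > 5^M$ by (\ref{def:k_0}) and the exponent is strictly positive (since $M'/M \ge 3M > 3$).

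For part (ii), the upper bound $|w| < q_{k'}^{1/r}$ follows directly from $j \le j^B_{k'}$ by monotonicity of $\sup H_j$. The real work is the lower bound, which I recover by exploiting the maximality built into the definition of $j^D_k$: since $j^D_k+1$ fails this condition, $\sup H_{j^D_k+1} \ge q_{k+1}^{1/r}$. Applying Lemma~\ref{lem:H_j} twice yields
\begin{equation*}
  \sup H_{j^D_k+1} < \inf H_{j^D_k+2} = (\inf H_{j^D_k})^{r^2},
\end{equation*}
so $\inf H_{j^D_k} > q_{k+1}^{1/r^3}$. Combined with monotonicity and $1/r^3 > 1/r^5$ (since $r>1$) this gives $|w| \ge \inf H_j \ge \inf H_{j^D_k} > q_{k+1}^{1/r^5}$, placing $|w|$ in $I^D_k$. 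The final inequality $|w| \ge q_{k+1}^{1/N'}$ then reduces to $N' \ge r^5$, which follows from $N' = (N+1)Mr \ge NMr = M'Mr^6 > r^5$.

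The whole argument is essentially bookkeeping with the parameters chosen in \S\ref{s:Init}. The one step that requires a little thought, rather than pure substitution, is recovering the lower bound $\inf H_{j^D_k} > q_{k+1}^{1/r^3}$ in part (ii): one has only the negative information that the bound fails at level $j^D_k+1$, and must iterate Lemma~\ref{lem:H_j} twice (not once) to squeeze out a quantitative lower bound on $\inf H_{j^D_k}$ itself. Everything else is direct inequality chasing.
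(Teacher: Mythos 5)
Your proof is correct and follows essentially the same route as the paper's: translate the (second forms of the) definitions of $j^C_k$, $j^D_k$, $j^B_{k'}$ into bounds on $\inf H_j$ and $\sup H_j$ via Lemma~\ref{lem:H_j}, then verify the arithmetic with the parameter choices from \S\ref{s:Init}. The one local difference is in part (ii): the paper obtains $H_{j^D_k}\subset I^D_k$ by citing Lemma~\ref{lem:IKCD3}, whereas you extract the needed lower bound $\inf H_{j^D_k}>q_{k+1}^{1/r^3}>q_{k+1}^{1/r^5}$ directly, by using the maximality of $j^D_k$ (so $\sup H_{j^D_k+1}\ge q_{k+1}^{1/r}$) and iterating the relation $\inf H_{j+1}=(\inf H_j)^r$ from Lemma~\ref{lem:H_j}. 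This is the same trick the paper itself uses in the proof of Lemma~\ref{lem:indices}(ii), so the difference is purely in which prior lemma is invoked, not in the substance of the argument.
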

\begin{proof}
By definition, $\inf H_{j^C_k}\ge q_k^{M'}$ and $\sup H_{j^D_k} <q_{k+1}^{1/r}$, 
  giving the first implication in (i).  
Since $N\ge2Mr$ we have $$q_{k+1}^{1-1/r}>q_k^{N-N/r}\ge q_k^2>2q_k$$  
  so that $q_{k+1}^{1/r}<\frac{q_{k+1}}{2q_k}$.  
This, together with $M'\ge M$, implies the second implication in (i).  

For (ii) note that (\ref{ieq:IKCD3}) implies 
  $H_{j^D_k}\subset I^C_k\cap I^D_k$, giving the first implication, 
  while the second implication follows from $N'>r^5$.  
\end{proof}

\section{Tree of slits}\label{s:Tree}
In this section we specify exactly how the slits of level $j+1$ are constructed 
  from the slits of level $j$.  As before, we refer to any slit constructed from a 
  previously constructed slit $w$ as a \emph{child} of $w$.  
The parameters $\delta_j$ and $\rho_j$ are also specified in this section.  
At each step, we shall verify that the choice of $\delta_j$ and $\rho_j$ is such 
  that all cross-products of slits of level $j$ with their children are $<\delta_j$ 
  while the number of children is at least $\rho_j|w|^{r-1}\delta_j$, as required 
  by (\ref{enough}) in \S\ref{s:Cantor}.  

Depending on the type of construction to be applied, there will be various kinds 
  of hypotheses on all slits within a given level that we need to verify.  
These hypotheses can be one of two kinds.  The first kind involve inequalities on 
  lengths of slits and these will always be satisfied using Lemma~\ref{lem:range}.  
We will not check these hypotheses explicitly.  
The second kind is more subtle and involve conditions related to the continued 
  fraction expansions of the inverse slopes of slit directions.  The fact that we need 
  such hypotheses on slits is evident from Lemma~\ref{lem:good}, which is one of 
  the main tools we have for determining whether a slit will have lots of children.  

One of the main tasks of this section will be to check the required hypotheses of 
  the second kind at each step.  
For the levels between consecutive indices of the form $j^A_k$, these hypotheses 
  will hold by virtue of the results in \S\ref{s:Liouville} and \S\ref{s:Diophantine}.  
Special attention is needed to check the relevant hypotheses of the second kind 
  for the levels $j^A_k,k\in{B,C,D}$ when the type of construction used to find the 
  slits of the next level changes.  

In what follows, it will be implicitly understood that $k<k'$ denote consecutive 
  elements of $\ell_N$, with $k\ge k_0$.  If $k>k_0$, then $\Tilde{k}$ will denote 
  the element of $\ell_N$ immediately before $k$.  

\subsection{Liouville region}\label{ss:Liouville}
For the levels $j$ satisfying $j^C_k\le j<j^D_k$, the slits of level $j+1$ will be 
  constructed by applying Lemma~\ref{lem:liouville} to all slits of level $j$.  
In other words, the slits of level $j+1$ consist of all slits $w'\in\Lambda(w,k)$ 
  where $w$ is a slit of level $j$ and $v$ is a loop such that $w'=w+2v$.  

Recall that an initial slit $w_0$ has been fixed using Lemma~\ref{lem:init:slit}.  
Lemma~\ref{lem:max:area} implies the cross-products of $w_0$ with its children 
  are all less than $4/q_{k_0}$, while Lemma~\ref{lem:liouville} implies the number 
  children is at least $|w|^{r-1}/q_{k_0}$.  Therefore, we set 
 $$\delta_0 = \frac{4}{q_{k_0}} \quad\text{ and }\quad \rho_0=\frac14.$$  

For the levels $j^C_k<j<j^D_k$, we set 
  $$\delta_j = \frac{4}{q_k} \quad\text{ and }\quad \rho_j=\frac14.$$  

\begin{lemma}\label{lem:LR}
For $j^c_k<j\le j^D_k$, every slit $w$ of level $j$ satisfies $d(w,k)\le2$.  
Moreover, if $j<j^D_k$ then 
  the cross-products of each slit of level $j$ with its children are less than $\delta_j$ 
  and the number of children is at least $\rho_j|w|^{r-1}\delta_j$.  
\end{lemma}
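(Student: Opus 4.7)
The plan is to invoke the three main outputs of the Liouville construction set up in \S\ref{s:Liouville}, then observe that the parameters $\delta_j=4/q_k$ and $\rho_j=1/4$ chosen in \S\ref{ss:Liouville} are calibrated exactly to match these outputs: Lemma~\ref{lem:gcd} will give the divisibility bound $d(w,k)\le2$, Lemma~\ref{lem:max:area} will give the cross-product estimate, and Lemma~\ref{lem:liouville} will give the lower bound on the number of children.

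For the first assertion, I would observe that any slit $w$ of level $j$ with $j^C_k<j\le j^D_k$ arises as an element of $\Lambda(w^*,k)\subset\Lambda_1(w^*,k)$ for some parent $w^*$ of level $j-1\ge j^C_k$, by the very definition of the construction in this range. Lemma~\ref{lem:gcd} then yields $d(w,k)\le2$ at once, with no dependence on any property of $w^*$. In particular, this step does not require knowing anything about the $d$-values of the parents, which is fortunate because at the boundary level $j=j^C_k$ the parents were produced by the earlier Diophantine (or initial-slit) construction.

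For the moreover clause, I fix $w$ of level $j$ with $j^C_k<j<j^D_k$ and a child $w'=w+2v\in\Lambda(w,k)$, which lives at level $j+1$. By the first assertion, $d(w,k)\le2$. To invoke Lemma~\ref{lem:max:area} I must check $|v|<q_{k+1}$; since $j^C_k\le j+1\le j^D_k$, Lemma~\ref{lem:range}(i) applied to $w'$ gives $|w'|<q_{k+1}/(2q_k)$, and the identity $|w'|=|w|+2|v|$ forces $|v|\le|w'|/2<q_{k+1}$. Lemma~\ref{lem:max:area} then delivers $|w\times v|<2d(w,k)/q_k\le 4/q_k=\delta_j$. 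For the count, Lemma~\ref{lem:range}(i) applied to $w$ itself gives $|w|\ge q_k^M$, hence $|w|^{r-1}\ge q_k$ (since $M(r-1)=1$), which is exactly the hypothesis of Lemma~\ref{lem:liouville}. That lemma yields $\#\Lambda(w,k)\ge|w|^{r-1}/q_k=\rho_j|w|^{r-1}\delta_j$, as required.

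There is no serious conceptual obstacle; the argument is a careful bookkeeping of hypotheses. The one place that calls for a bit of care is the verification $|v|<q_{k+1}$: the naive route of combining $|v|\le 2|w|^r$ with the \emph{parent's} length bound $|w|<q_{k+1}/(2q_k)$ would demand $q_{k+1}^{r-1}<2^{r-1}q_k^r$, which fails badly in the Liouville regime $q_{k+1}\gg q_k^N$. Routing the estimate through the \emph{child} instead works precisely because the Liouville step advances the length only by a factor of order $|w|^{r-1}$, keeping $w'$ inside the same interval $I^C_k$ and so within reach of Lemma~\ref{lem:range}(i).
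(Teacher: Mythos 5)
Your proposal is correct and takes the same approach as the paper: cite Lemma~\ref{lem:gcd} for the divisibility bound, Lemma~\ref{lem:max:area} for the cross-product bound $4/q_k=\delta_j$, and Lemma~\ref{lem:liouville} for the count $|w|^{r-1}/q_k=\rho_j|w|^{r-1}\delta_j$. The only difference is that the paper suppresses the verification of the length hypotheses (it announces in \S\ref{s:Tree} that such checks always follow from Lemma~\ref{lem:range} and will be omitted), whereas you carry them out explicitly --- in particular you correctly note that the hypothesis $|v|<q_{k+1}$ for Lemma~\ref{lem:max:area} should be obtained by applying Lemma~\ref{lem:range}(i) to the child $w'$ rather than crudely via $|v|\le 2|w|^r$ together with the parent's bound, which would indeed be insufficient in the Liouville regime.
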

\begin{proof}
Since all slits of level $j$ were obtained via the Liouville construction, 
  the first part follows from the first assertion of Lemma~\ref{lem:gcd}.  
Suppose $w$ is a slit of level $j$ with $j^C_k<j<j^D_k$.  
Lemma~\ref{lem:max:area} now implies the cross-products of $w$ with 
  its children are less than $4/q_k$, and the number of children is at 
  least $|w|^{r-1}/q_k$, by Lemma~\ref{lem:liouville}.  
\end{proof}

It will be convenient to set $$\alpha_k=\frac{q_k}{2\rho^{N'}}.$$  
\begin{lemma}\label{lem:LR->DR}
Every slit of level $j^D_k$ is $\alpha_k$-normal.  
\end{lemma}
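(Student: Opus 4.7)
The plan is to verify the hypotheses of Lemma~\ref{lem:N'-good=>normal} directly, with $\alpha=\alpha_k$, since $\alpha_k\rho^{N'}=q_k/2$ has a very clean interpretation in terms of the Liouville convergent of $w$ indexed by $k$.

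First I would use the fact that every slit of level $j^D_k$ was produced by iterating the Liouville construction through the levels $j^C_k,\ldots,j^D_k-1$, together with Lemma~\ref{lem:LR}, to conclude $d(w,k)\le 2$. Next, Lemma~\ref{lem:range}(i), applied at $j=j^D_k$, gives $q_k^M\le|w|<q_{k+1}/(2q_k)$. With $d(w,k)\le 2$ and this bound on $|w|$, Lemma~\ref{lem:gcd} (the second statement) produces a convergent $p/q$ of the inverse slope of $w$ with height $q$ equal to either $q_k|w|$ or $q_k|w|/2$; in particular
\[
\frac{q_k|w|}{2} \le q \le q_k|w|.
\]

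Now I would check that such a $q$ witnesses $(\alpha_k\rho^{N'},|w|^{r-1})$-goodness of $w$. Since $\alpha_k\rho^{N'}=q_k/2$, the lower bound $q\ge q_k|w|/2=\alpha_k\rho^{N'}\,|w|$ is immediate. For the upper bound $q\le|w|^r$, it is enough to show $q_k\le|w|^{r-1}$, and this follows from $|w|\ge q_k^M$ with $M=1/(r-1)$.

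Finally I would apply Lemma~\ref{lem:N'-good=>normal} with $\alpha=\alpha_k$. To invoke it, I need $q_{k+1}^{1/N}\le|w|<q_{k'}^{1/r}$. The right inequality is immediate from the definition of $j^B_{k'}$ since $j^D_k\le j^B_{k'}$ (Lemma~\ref{lem:indices}(i)), and the left one follows from the fact that $H_{j^D_k}$ is contained in $I^C_k\cap I^D_k$ (Lemma~\ref{lem:IKCD3}), so $|w|\ge q_{k+1}^{1/r^5}\ge q_{k+1}^{1/N}$ because $N=M'r^5\ge r^5$. Combined with the $(\alpha_k\rho^{N'},|w|^{r-1})$-goodness just established, Lemma~\ref{lem:N'-good=>normal} yields that $w$ is $\alpha_k$-normal.

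The only nontrivial point here is recognising that the particular choice $\alpha_k=q_k/(2\rho^{N'})$ is designed precisely so that the convergent produced by the Liouville construction (which has height bracketed by $q_k|w|/2$ and $q_k|w|$) automatically certifies $(\alpha_k\rho^{N'},|w|^{r-1})$-goodness; once this is noticed, the rest is just bookkeeping with the index definitions of \S\ref{ss:indices}. There is no real obstacle in the argument.
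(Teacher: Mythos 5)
Your proof is correct and follows essentially the same route as the paper's: obtain $d(w,k)\le 2$ from Lemma~\ref{lem:LR}, then use the second statement of Lemma~\ref{lem:gcd} to locate a convergent of height $q_k|w|/2$ or $q_k|w|$, observe that $\alpha_k\rho^{N'}=q_k/2$ makes this certify $(\alpha_k\rho^{N'},|w|^{r-1})$-goodness (using $q_k\le|w|^{r-1}$ from $|w|\ge q_k^M$), and conclude $\alpha_k$-normality via Lemma~\ref{lem:N'-good=>normal}. The one thing you do in addition to the paper's text is to explicitly verify the length hypothesis $q_{k+1}^{1/N}\le|w|<q_{k'}^{1/r}$ of Lemma~\ref{lem:N'-good=>normal}; the paper declares at the start of \S\ref{s:Tree} that such length hypotheses will be silently discharged by Lemma~\ref{lem:range}, so your check (deriving $|w|\ge q_{k+1}^{1/r^5}\ge q_{k+1}^{1/N}$ from $H_{j^D_k}\subset I^C_k\cap I^D_k$ rather than from the weaker bound stated in Lemma~\ref{lem:range}(ii)) is a correct and welcome bit of bookkeeping that the paper leaves implicit.
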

\begin{proof}
Let $w$ be a slit of level $j^D_k$.  Since $H_{j^D_k}\subset I^C_k$, we have 
  $$2\alpha_k\rho^{N'} = q_k \le |w|^{r-1}.$$  
By Lemma~\ref{lem:LR}, we have $d(w,k)\le2$ and since $w$ was obtained via 
  the Liouville construction,  Lemma~\ref{lem:gcd} implies the inverse slope of $w$ 
  has a convergent with height between $q_k|w|/2$ and $q_k|w|$, or, by the 
  above, between $\alpha_k\rho^{N'}|w|$ and $|w|^r$.  
This means $w$ is $(\alpha_k\rho^{N'},|w|^{r-1})$-good, and therefore, 
  $\alpha_k$-normal, by Lemma~\ref{lem:N'-good=>normal}.  
\end{proof}

\subsection{Diophantine region}\label{ss:Diophantine}
For the levels $j$ satisfying $j^D_k\le j<j^B_{k'}$, the slits of level $j+1$ will be 
  constructed by applying Proposition~\ref{prop:normal} with the parameter 
  $\alpha=\alpha_kr^{j-j^D_k}$ to all slits $w$ of level $j$.  
In other words, the slits of level $j+1$ consist of all $\alpha r$-normal children 
  of all slits of level $j$, where $\alpha r=\alpha_kr^{j-j^D_k+1}$.  

For the levels $j^D_k\le j<j^B_{k'}$, we set 
  $$\delta_j = \frac{2\rho^{N'}}{q_kr^{j-j^D_k}} \quad\text{ and }\quad 
        \rho_j = \frac{c_0}{2\rho^{N'+1}}.$$  

\begin{lemma}\label{lem:DR}
For $j^D_k\le j\le j^B_{k'}$, every slit $w$ of level $j$ is $\alpha_kr^{j-j^D_k}$-normal.  
Morevover, if $j<j^B_{k'}$ then 
  the cross-products of each slit of level $j$ with its children are less than $\delta_j$ 
  and the number of children is at least $\rho_j|w|^{r-1}\delta_j$.  
\end{lemma}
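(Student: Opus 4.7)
The plan is to prove Lemma~\ref{lem:DR} by induction on $j$ running from $j^D_k$ up to $j^B_{k'}$, with the base case $j = j^D_k$ being precisely the content of Lemma~\ref{lem:LR->DR}. For the inductive step with $j^D_k \le j < j^B_{k'}$, I would assume every slit $w$ of level $j$ is $\alpha$-normal with $\alpha := \alpha_k r^{j - j^D_k}$ and then invoke Proposition~\ref{prop:normal} with this $\alpha$ for each such $w$. Its conclusion then delivers the three statements needed: the $\alpha r$-normal children are precisely the $\alpha_k r^{j+1-j^D_k}$-normal slits of level $j+1$; the cross-product bound $|w \times v| < 1/\alpha$ built into (\ref{def:child}) agrees with $\delta_j = 2\rho^{N'}/(q_k r^{j-j^D_k})$ since $1/\alpha = \delta_j$; and the asserted lower bound $c_0 |w|^{r-1}/(2\alpha \rho^{N'+1})$ on the number of children equals $\rho_j |w|^{r-1} \delta_j$ by direct inspection of the definitions of $\rho_j$ and $\delta_j$.

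Most of the work is in verifying the three hypotheses of Proposition~\ref{prop:normal} hold for $w$ of level $j \in [j^D_k, j^B_{k'})$. The length condition $q_{k+1}^{1/N} \le |w| < q_{k'}^{1/r}$ comes straight from Lemma~\ref{lem:range}(ii) together with $N \ge r^5$, which gives $|w| \ge q_{k+1}^{1/r^5} \ge q_{k+1}^{1/N}$. For the stronger bound $5|w|^r < q_{k'}^{1/r}$, I would observe the identity $5(\sup H_j)^r = \sup H_{j+1}$, which drops out of the closed form for $\sup H_j$, and combine it with $\sup H_{j+1} < q_{k'}^{1/r}$, valid since $j+1 \le j^B_{k'}$. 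The $\alpha$-normality of $w$ is the inductive hypothesis.

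The substantive inequality to check is (\ref{ieq:normal}). Here I would estimate $\alpha^2 \le (q_k/(2\rho^{N'}))^2 r^{2(j-j^D_k)}$ and use $|w| \ge (\inf H_{j^D_k})^{r^{j-j^D_k}} \ge q_k^{M' r^{j-j^D_k}}$, which holds because $\inf H_{j^D_k} \ge q_{k+1}^{1/r^5} > q_k^{M'}$ thanks to $k \in \ell_N$ and $N = M' r^5$. This yields $|w|^{(r-1)^2} \ge q_k^{M'(r-1)^2 r^{j-j^D_k}}$, and by (\ref{def:M'}) we have $M'(r-1)^2 = M'/M^2 \ge 3$. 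Hence (\ref{ieq:normal}) reduces to $60 \rho^{N'+3} q_k^2 r^{2(j-j^D_k)} \le c_0 q_k^{3 r^{j-j^D_k}}$, and the right-hand exponential in $r^{j-j^D_k}$ easily beats the polynomial factor on the left as soon as $j > j^D_k$; the base case $j = j^D_k$ collapses to $60 \rho^{N'+3}/c_0 \le q_k$, which is guaranteed by the third bound listed in (\ref{def:k_0}). The only point requiring care is tracking the double-exponentials in $r$ when both sides of (\ref{ieq:normal}) are written out, but since $M'$ and $k_0$ were chosen with precisely this estimate in mind, no surprises should arise.
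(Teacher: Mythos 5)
Your proposal is correct and follows essentially the same strategy as the paper: induction on $j$ with base case Lemma~\ref{lem:LR->DR}, inductive step via Proposition~\ref{prop:normal}, the identity $1/\alpha=\delta_j$, and the observation that the count from the Proposition equals $\rho_j|w|^{r-1}\delta_j$.

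The one place where you diverge stylistically is the verification of~(\ref{ieq:normal}) for $j>j^D_k$. You lower-bound $|w|$ by $q_k^{M'r^{j-j^D_k}}$ and compare $60\rho^{N'+3}q_k^2r^{2n}$ against $c_0q_k^{3r^n}$ directly; the ``easily beats'' step can be made rigorous (e.g.\ $r^{2n}<4^n$ while $q_k^{3(r^n-1)}\ge (q_k^{r-1})^{3n}>125^n$, then apply the base case), but as written it is a bit compressed. The paper instead fixes the $j$-independent bound $|w|^{(r-1)^2}>q_k^3$ and argues by monotonicity: the left side of~(\ref{ieq:normal:2}) gains only a factor $r^2$ per increment of $j$, while the right side gains a factor at least $|w|^{(r-1)^3}>q_{k_0}^{3(r-1)}>5>r^2$, so it suffices to check $j=j^D_k$. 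Both arguments are sound; the paper's is slightly cleaner and avoids the nested exponent. Two small corrections: the quantity $60\rho^{N'+3}/c_0\le q_{k_0}$ is the \emph{second} (not third) entry in~(\ref{def:k_0}); and note the paper's proof does not explicitly verify the length hypotheses $q_{k+1}^{1/N}\le|w|<5|w|^r<q_{k'}^{1/r}$ because it announced at the start of \S\ref{s:Tree} that such hypotheses always follow from Lemma~\ref{lem:range} --- your explicit check (using $\sup H_{j+1}=5(\sup H_j)^r$ and $j+1\le j^B_{k'}$) is correct and is a reasonable addition.
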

\begin{proof}
The case $j=j^D_k$ of the first assertion follows from Lemma~\ref{lem:LR->DR} 
  while the remaining cases follow from Proposition~\ref{prop:normal}.  

For children constructed via Proposition~\ref{prop:normal} applied to 
  an $\alpha$-normal slit, the cross-products are less than $1/\alpha$, 
  which is $\delta_j$ if $\alpha=\alpha_kr^{j-j^D_k}$.  
The number of children is at least $$\frac{c_0|w|^{r-1}}{2\alpha_k\rho^{N'+1}} 
   = \frac{c_0r^{j-j^D_k}}{2\rho^{N'+1}}|w|^{r-1}\delta_j\ge\rho_j|w|^{r-1}\delta_j$$ 
  provided we verify that the inequality (\ref{ieq:normal}) holds, i.e. if 
\begin{equation}\label{ieq:normal:2}
  60q_k^2r^{2(j-j^D_k)}\rho^{N'+3}\le c_0|w|^{(r-1)^2}.  
\end{equation}
To check this inequality, we first note that $|w|\ge q_{k+1}^{1/r}>q_k^{N/r}>q_k^{M'}$ 
  so that $$|w|^{(r-1)^2} > q_k^{M'/M^2} \ge q_k^3,$$ since 
  $M'\ge3M^2$, by the first relation in (\ref{def:M'}).  
Next, we note that it is enough to check (\ref{ieq:normal:2}) 
  in the case $j=j^D_k$ since the left hand side increases by 
  a factor $r^2$ as $j$ increments by one, while the right hand 
  side increases by a factor $|w|^{(r-1)^3}>q_{k_0}^{3(r-1)}>5>r^2$.  
Moreover, since $|w|^{(r-1)^2}>q_k^3$, (\ref{ieq:normal:2}) in 
  the case $j=j^D_k$ follows from $60\rho^{N'+3}<c_0q_{k_0}$, 
  which is guaranteed by the second term in (\ref{def:k_0}).  
\end{proof}

\begin{lemma}\label{lem:DR->BR}
Every slit $w$ of level $j^B_{k'}$ is $(\alpha_k,|w|^{r-1})$-good.  
\end{lemma}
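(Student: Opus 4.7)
The plan is to invoke Lemma~\ref{lem:DR} at the top of its range and then unpack the $t=1$ case of the definition of $\alpha$-normality. By Lemma~\ref{lem:indices}(i), $j^D_k\le j^B_{k'}$, so Lemma~\ref{lem:DR} applies at level $j^B_{k'}$ and tells us that every slit $w$ of level $j^B_{k'}$ is $\alpha$-normal with $\alpha=\alpha_k r^{j^B_{k'}-j^D_k}$.

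Next, reading off the $t=1$ instance of the definition of normality gives that $w$ is $(\alpha\rho,|w|^{r-1})$-good: its inverse slope has a convergent of height $q$ with $\alpha\rho|w|\le q\le|w|^r$. The property of being $(\cdot,\beta)$-good is monotone decreasing in its first argument, since shrinking it only enlarges the admissible interval $[\alpha_0|w|,\beta|w|]$ for $q$; because $\alpha_k\le\alpha\rho$, the same $q$ witnesses that $w$ is $(\alpha_k,|w|^{r-1})$-good, as required.

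The one point requiring care is to verify that $t=1$ actually lies in the allowed range $[1,T]$ appearing in the definition of normality, i.e.\ that $\alpha\rho\le|w|^{r-1}$. At the base index $j=j^D_k$ this reduces to $\alpha_k\rho\le|w|^{r-1}$, which follows from $H_{j^D_k}\subset I^C_k$ (giving $|w|\ge q_k^{M'}$), the definition $\alpha_k=q_k/(2\rho^{N'})$, and the relation $M'>M=1/(r-1)$ from (\ref{def:M'}). As $j$ grows up to $j^B_{k'}$, the right side $|w|^{r-1}$ grows doubly exponentially via the bound $|w_{j+1}|\ge|w_j|^r$ of (\ref{ieq:|w_{j+1}|}), while $\alpha$ grows only geometrically in $j$, so the inequality persists throughout. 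I expect this bookkeeping check to be the one mildly nontrivial piece of the argument; the rest is a direct chain of implications.
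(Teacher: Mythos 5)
Your proof is correct and takes essentially the same route as the paper: apply Lemma~\ref{lem:DR} at level $j^B_{k'}$, extract the $t=1$ case of normality to get $(\alpha\rho,|w|^{r-1})$-goodness, and use monotonicity of goodness in the first argument to pass from $\alpha\rho$ down to $\alpha_k$. The extra check that $T>1$ (so that $t=1$ lies in the allowed range) is already implicit in the conclusion of Lemma~\ref{lem:DR}, since the definition of $\alpha$-normality requires $T>1$, so that piece of bookkeeping is redundant, though harmless.
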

\begin{proof}
Let $w$ be a slit of level $j^B_{k'}$.  Lemma~\ref{lem:DR} implies 
  that $w$ is $\alpha$-normal for some $\alpha>\alpha_k$.  
By the case $t=1$ in the definition of normality, this means $w$ is 
  $(\alpha,|w|^{r-1})$-good, i.e. its inverse slope has a convergent 
  whose height is between $\alpha|w|$ and $|w|^r$.  
Since $\alpha>\alpha_k$ the height of this convergent is between 
  $\alpha_k|w|$ and $|w|^r$.  
Hence, $w$ is $(\alpha_k,|w|^{r-1})$-good.  
\end{proof}

\subsection{Bounded region}\label{ss:Bounded}
For the levels $j$ satisfying $j^B_k\le j< j^C_k, k>k_0$, the slits of 
  level $j+1$ will be constructed by applying Lemma~\ref{lem:good} 
  to all slits $w$ of level $j$ with the parameters 
\begin{equation}\label{alpha:beta}
  \alpha=\alpha_{\Tilde{k}}-\frac{j-j^B_k}{2} \quad\text{ and }\quad \beta=|w|^{r-1}.  
\end{equation}
In other words, the slits of level $j+1$ consist of all slits of the form $w+2v$ 
  where $w$ is a slit of level $j$ and $v\in\Delta(w,\alpha,\beta)$ where 
  $\alpha$ and $\beta$ are the parameters given in (\ref{alpha:beta}).  

For the levels $j^B_k\le j< j^C_k, k>k_0$, we set 
  $$\delta_j = \frac{4\rho^{N'}}{q_{\Tilde{k}}} \quad\text{ and } \quad 
        \rho_j = \frac{c_0}{2}.$$  

\begin{lemma}\label{lem:BR}
For $j^B_k\le j\le j^C_k$, every slit $w$ of level $j$ is $(\alpha_{\Tilde{k}}/2,|w|^{r-1})$-good.  
Morevover, if $j<j^C_k$ then 
  the cross-products of each slit of level $j$ with its children are less than $\delta_j$ 
  and the number of children is at least $rho_j|w|^{r-1}\delta_j$.  
\end{lemma}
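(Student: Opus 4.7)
The plan is induction on $j$ from $j^B_k$ to $j^C_k$, in parallel with Lemmas~\ref{lem:LR} and \ref{lem:DR}. I would in fact carry the stronger inductive invariant that every slit $w$ of level $j$ is $(\alpha_{\Tilde{k}}-(j-j^B_k)/2,\,|w|^{r-1})$-good, matching exactly the parameter $\alpha$ used at that level in the construction (\ref{alpha:beta}). The uniform $(\alpha_{\Tilde{k}}/2,\,|w|^{r-1})$-good conclusion then follows from the bound $j-j^B_k\le j^C_k-j^B_k\le \log_r(M')+4$ of Lemma~\ref{lem:indices}(ii), which is at most $\alpha_{\Tilde{k}}$ by the third term in (\ref{def:k_0}).

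The base case $j=j^B_k$ is exactly Lemma~\ref{lem:DR->BR}. The inductive step is immediate from Lemma~\ref{lem:good:children}: each child $w=w_{\mathrm{par}}+2v$ with $v\in\Delta(w_{\mathrm{par}},\alpha,|w_{\mathrm{par}}|^{r-1})$ is $(\alpha-1/2,|w_{\mathrm{par}}|^{r-1})$-good, hence $(\alpha-1/2,|w|^{r-1})$-good, since $|w|\ge|w_{\mathrm{par}}|$ only enlarges the allowed range of convergent heights in the definition of $(\alpha,\beta)$-good.

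For the moreover part I would bound the cross-product directly from the definition of $\Delta$: any child $w+2v$ satisfies $|w\times v|<1/\alpha\le 2/\alpha_{\Tilde{k}}=\delta_j$. The child count follows from Lemma~\ref{lem:good}, which gives at least $c_0\beta/\alpha=c_0|w|^{r-1}/\alpha\ge (c_0/2)\cdot|w|^{r-1}\cdot(4\rho^{N'}/q_{\Tilde{k}})=\rho_j|w|^{r-1}\delta_j$.

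The main obstacle, as in the Diophantine-region lemma, is verifying the side hypothesis $\alpha<c_0\beta$ required by Lemma~\ref{lem:good}. Here $\alpha\le\alpha_{\Tilde{k}}=q_{\Tilde{k}}/(2\rho^{N'})$ and $\beta=|w|^{r-1}\ge (\inf H_{j^B_k})^{r-1}\ge q_k^{(r-1)/r^3}$ (the latter appearing inside the proof of Lemma~\ref{lem:indices}(ii)). Since $\Tilde{k}$ and $k$ are consecutive in $\ell_N$ with $\Tilde{k}\in\ell_N$, we have $q_k>q_{\Tilde{k}}^N$, so $|w|^{r-1}>q_{\Tilde{k}}^{N(r-1)/r^3}=q_{\Tilde{k}}^{M'r^2(r-1)}$. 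The choice $M'\ge 3M^2=3/(r-1)^2$ makes the exponent at least $3Mr^2>3$, so $|w|^{r-1}\gg q_{\Tilde{k}}$, and the strict inequality $\alpha<c_0\beta$ follows comfortably from the lower bound on $q_{k_0}$ in (\ref{def:k_0}).
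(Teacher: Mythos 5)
Your proposal is correct and follows essentially the same route as the paper: both start the induction at $j^B_k$ from Lemma~\ref{lem:DR->BR}, propagate $(\alpha,\beta)$-goodness with $\alpha=\alpha_{\Tilde{k}}-(j-j^B_k)/2$ and $\beta=|w|^{r-1}$ via Lemma~\ref{lem:good:children}, and then bound cross-products by $1/\alpha$ and the child count by $c_0\beta/\alpha$ using Lemma~\ref{lem:good}. The one thing you do that the paper does not is explicitly verify the side condition $\alpha<c_0\beta$ of Lemma~\ref{lem:good} — the paper declares in \S\ref{s:Tree} that ``hypotheses on lengths of slits'' (which this one is, since $\beta=|w|^{r-1}$) will be suppressed as consequences of Lemma~\ref{lem:range} — so this is a worthwhile sanity check rather than a divergence in approach, and your estimate $|w|^{r-1}>q_{\Tilde k}^{M'r^2(r-1)}\gg q_{\Tilde k}\gg\alpha$ is correct.
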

\begin{proof}
First we note that every slit $w$ of level $j$ is $(\alpha,\beta)$-good, where 
  $\alpha$ and $\beta$ are the parameters given in (\ref{alpha:beta}).  
Indeed, for $j=j^B_k$ this follows from Lemma~\ref{lem:DR->BR} while 
  for $j^B_k<j\le j^C_k$ it follows from Lemma~\ref{lem:good:children}.  
Lemma~\ref{lem:indices}.ii and the third relation in (\ref{def:k_0}) imply 
  $$j-j^B_k\le j^C_k-j^B_k\le\log_r(M')+4\le\frac{\alpha_{\Tilde{k}}}{2}$$ 
  from which we see that the first assertion holds.  

For children constructed via Lemma~\ref{lem:good} applied to 
  an $(\alpha,\beta)$-good slit, the cross-products are less than $1/\alpha$, 
  which is $<\delta_j$, since $\alpha>\alpha_{\Tilde{k}}/2$.  
And since $\alpha\le\alpha_{\Tilde{k}}$, the number of children is at least 
  $$\frac{c_0|w|^{r-1}}{\alpha_{\Tilde{k}}} = \rho_j|w|^{r-1}\delta_j$$ 
  giving the second assertion.  
\end{proof}

Finally, for the levels $j=j^C_k$ with $k>k_0$, we set 
  $$\delta_j = \frac{8\rho^{N'}}{q_{\Tilde{k}}} \quad\text{ and } \quad 
        \rho_j = \frac{q_{\Tilde{k}}}{8\rho^{N'}q_k}.$$  
\begin{lemma}\label{lem:BR->LR}
For any slit $w$ of level $j=j^C_k$ with $k>k_0$, 
  the cross-products of $w$ with its children are less than $\delta_j$ 
  and the number of children is at least $\rho_j|w|^{r-1}\delta_j$.  
\end{lemma}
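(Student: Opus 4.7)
The plan is to apply the Liouville construction of Section~\ref{s:Liouville} to each slit $w$ of level $j^C_k$, defining its children as the slits $w+2v$ with $v\in\Lambda(w,k)$. By Lemma~\ref{lem:range}(i), $q_k^{M'}\le|w|<q_{k+1}/(2q_k)$, and since $M'\ge M=1/(r-1)$, one has $|w|^{r-1}\ge q_k$. Lemma~\ref{lem:liouville} then furnishes at least $|w|^{r-1}/q_k$ children, which is exactly $\rho_j|w|^{r-1}\delta_j$ since $\rho_j\delta_j=1/q_k$. The remaining task is to show that each such child satisfies $|w\times v|<\delta_j=8\rho^{N'}/q_{\Tilde{k}}$.

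For this I would appeal to Lemma~\ref{lem:max:area}, which yields $|w\times v|<2d(w,k)/q_k$ provided $|v|<q_{k+1}$. The definition of $j^C_k$ together with Lemma~\ref{lem:H_j} gives $|w|<q_k^{M'r^2}$, while $q_{k+1}>q_k^{N}\ge q_k^{M'r^5}$, so $2|w|^r<q_{k+1}$; this justifies applying the lemma to every $v\in\Lambda(w,k)$ and will also be used below. The heart of the argument will then be the bound
\begin{equation*}
d(w,k)\le\frac{4\rho^{N'}q_k}{q_{\Tilde{k}}},
\end{equation*}
equivalently $|u|\ge \alpha_{\Tilde{k}}|w|/2$ for the Liouville convergent $u$ of $w$ indexed by $k$ (since $|u|=|w|q_k/d(w,k)$).

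To prove this, I would combine Lemma~\ref{lem:BR}, which supplies a convergent $v'$ of the inverse slope of $w$ with $\alpha_{\Tilde{k}}|w|/2\le|v'|\le|w|^r$, together with Lemma~\ref{lem:LC}, which (using $|w|<q_{k+1}/(2q_k)$) guarantees that $u$ is itself a convergent of the inverse slope of $w$ and that the next convergent in the expansion has height greater than $q_{k+1}/2$. Since $|v'|\le|w|^r<q_{k+1}/2$, $v'$ must occur at or before $u$ in the ordered list of convergents, so $|v'|\le|u|$, yielding $|u|\ge\alpha_{\Tilde{k}}|w|/2$ as desired. The main obstacle is precisely this comparison: the ``good'' convergent produced by the preceding Bounded region construction is defined intrinsically by the inverse slope of $w$, whereas the Liouville convergent is tied to the continued fraction expansion of $\lambda$ at index $k$; the only bridge between them is the height gap immediately after $u$ forced by Lemma~\ref{lem:LC}, and it is exactly the length estimate $|w|^r<q_{k+1}/2$ that lets this gap do the work.
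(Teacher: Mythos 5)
Your proposal is correct and follows essentially the same route as the paper's proof: you use Lemma~\ref{lem:BR} to get $(\alpha_{\Tilde{k}}/2,|w|^{r-1})$-goodness, Lemma~\ref{lem:LC} to show the convergent after the Liouville convergent $u$ has height exceeding $|w|^r$ (so $|u|\ge\alpha_{\Tilde{k}}|w|/2$), and then Lemma~\ref{lem:max:area} and Lemma~\ref{lem:liouville} for the cross-product and counting bounds, exactly as the paper does. You are somewhat more explicit than the paper in checking the length hypotheses (e.g., $2|w|^r<q_{k+1}$), which the paper leaves implicit, but the argument is the same.
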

\begin{proof}
Suppose $w$ is a slit of level $j^C_k$ with $k>k_0$.  The case $j=j^C_k$ of 
  Lemma~\ref{lem:BR} implies $w$ is $(\alpha_{\Tilde{k}}/2,|w|^{r-1})$-good.  
Let $u$ be the Liouville convergent of $w$ indexed by $k$.  
By Lemma~\ref{lem:LC} the height $q'$ of the next convergent is 
  $$q'>\frac{q_{k+1}}{2} > q_{k+1}^{1/r} > \left(\sup H_{j^C_k}\right)^r \ge |w|^r.$$  
Since $w$ is $(\alpha_{\Tilde{k}}/2,|w|^{r-1})$-good, we must have 
  $|u|\ge\alpha_{\Tilde{k}}|w|/2$ so that, by Lemma~\ref{lem:max:area} 
  the cross-products of $w$ with its children are $$<\frac{2d(w,k)}{q_k} 
  = \frac{2|w|}{|u|} \le \frac{4}{\alpha_{\Tilde{k}}} = \frac{8\rho^{N'}}{q_{\Tilde{k}}}.$$
By Lemma~\ref{lem:liouville}, the number of children is at least 
  $|w|^{r-1}/q_k = \rho_j|w|^{r-1}\delta_j$.  
\end{proof}

The construction of the tree of slits is now complete.  

\section{Hausdorff dimension $1/2$}\label{s:Lower}
We gather the definitions of $\delta_j$ and $\rho_j$ (for $j>0$) in the table below.  
\begin{center}
\begin{align*}
  &            & j^B_k&\le j<j^C_k &&j^C_k & j^C_k<&j<j^D_k & & j^D_k\le j < j^B_{k'} \\[12pt]
 &\delta_j &&\frac{4\rho^{N'}}{q_{\Tilde{k}}} &&\frac{8\rho^{N'}}{q_{\Tilde{k}}} &&\frac{4}{q_k} &&\frac{2\rho^{N'}}{q_kr^{j-j^D_k}} \\[12pt]
 &\rho_j   &&\frac{c_0}{2}         && \frac{q_{\Tilde{k}}}{8\rho^{N'}q_k}           &&\frac{1}{4}       & &\frac{c_0}{2\rho^{N'+1}} \\[12pt]
 &\rho_j\delta_j &&\frac{2c_0\rho^{N'}}{q_{\Tilde{k}}} &&\frac{1}{q_k} &&\frac{1}{q_k} &&\frac{c_0/\rho}{q_kr^{j-j^D_k}} \\
\end{align*}
\end{center}

First, we verify the hypotheses needed for Falconer's estimate.  
Recall the definition $m_j=\rho_j|w_0|^{r^j(r-1)}\delta_j$ in (\ref{def:m_j}).  
\begin{lemma}
$\delta_j<\tfrac{1}{16}$ and $m_j\ge2$ for $j\ge0$.  
\end{lemma}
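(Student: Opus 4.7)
The proof divides into two independent calculations, and both are essentially bookkeeping exercises using the table together with the quantitative assumptions on $k_0$ in (\ref{def:k_0}) and on $|w_0|$ in (\ref{ieq:init:slit}).

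For the bound $\delta_j<\tfrac{1}{16}$ I would inspect the five possible formulas for $\delta_j$ (the value $\delta_0 = 4/q_{k_0}$ fixed in \S\ref{ss:Liouville}, plus the four entries in the table). Each has the shape $c\rho^{N'}/q$ or $c/q$ with $c\le 8$ and $q\in\{q_k,q_{\tilde k},q_kr^{j-j^D_k}\}$. Since $k,\tilde k\ge k_0$ and $r>1$, in every case $q\ge q_{k_0}$, so $\delta_j\le 8\rho^{N'}/q_{k_0}$. The last term in the maximum of (\ref{def:k_0}) gives $q_{k_0}>2^7\rho^{N'}$, and the bound $\delta_j<\tfrac{1}{16}$ follows uniformly. (For the entry $\delta_j=4/q_k$, which has no $\rho^{N'}$ in the numerator, the same inequality gives $\delta_j<4/128<\tfrac{1}{16}$ because $\rho^{N'}>1$.)

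For $m_j\ge 2$, I would read $\rho_j\delta_j$ off the last row of the table and combine it with the lower bound $|w_0|^{r^j(r-1)}\ge(\inf H_j)^{r-1}$ produced in \S\ref{ss:indices}. The case $j=0$ is immediate: Lemma~\ref{lem:init:slit} gives $|w_0|^{r-1}\ge q_{k_0}^{M'/M}$ and since $M'/M\ge 3M\ge 3$ (using $M'\ge 3M^2$ and $M\ge 1$), we get $m_0=|w_0|^{r-1}/q_{k_0}\ge q_{k_0}^{2}$, well above $2$. For $j\ge 1$, I would treat each block of the table separately, using Lemma~\ref{lem:range} to identify the relevant lower bound on $\inf H_j$: in the Liouville block $j^C_k\le j\le j^D_k$ we have $\inf H_j\ge q_k^{M'}$, so $|w_0|^{r^j(r-1)}\ge q_k^{M'/M}\ge q_k^{3}$ and $m_j\ge q_k^2$; in the Diophantine block $j^D_k<j<j^B_{k'}$ we have $\inf H_j\ge q_{k+1}^{1/N'}$, and the extra $r^{j-j^D_k}$ in the denominator of $\rho_j\delta_j$ is crushed by the doubly-exponential growth $|w_0|^{r^j}=(|w_0|^{r^{j^D_k}})^{r^{j-j^D_k}}$.

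The subtle case is the bounded block $j^B_k\le j<j^C_k$, where the table gives $\rho_j\delta_j=2c_0\rho^{N'}/q_{\tilde k}$ and the lower bound on slit length is weakest. Here I would reuse the estimate from the proof of Lemma~\ref{lem:indices}(ii), namely $\inf H_{j^B_k}\ge q_k^{1/r^3}$, so that $|w_0|^{r^j(r-1)}\ge q_k^{(r-1)/r^3}$. The saving comes from $k\in\ell_N$: by (\ref{def:ell_N}), $q_k>q_{\tilde k}^{N}$, and the choice $N=M'r^5$ in (\ref{def:N}) with $M'\ge 3M^2$ gives the exponent $N(r-1)/r^3\ge 3Mr^2\ge 3$. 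Hence $|w_0|^{r^j(r-1)}\ge q_{\tilde k}^{3}$ and $m_j\ge 2c_0\rho^{N'}q_{\tilde k}^2\gg 2$. The main obstacle is just arranging the exponents so that the enormous lower bound on $q_k$ coming from $k\in\ell_N$ swallows the factor $q_{\tilde k}$ in the denominator; the constants in (\ref{def:k_0}) and the choice of $N$ were engineered precisely to make this work.
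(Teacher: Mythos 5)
Both your calculation and the paper's proof arrive at the correct conclusion, but you take a noticeably more labor-intensive route. The $\delta_j<\tfrac1{16}$ part is essentially identical: a uniform bound $\delta_j\le 8\rho^{N'}/q_{k_0}$ together with the fourth relation in (\ref{def:k_0}). For $m_j\ge2$, however, the paper argues directly \emph{only} in the Liouville block $j^C_k\le j<j^D_k$ (where $\rho_j\delta_j=1/q_k$ and $\inf H_j\ge q_k^{M'}$ gives $m_j\ge 2\rho/c_0$), and then disposes of \emph{all} remaining levels $j^D_k\le j<j^C_{k'}$ at once by observing that $m_{j+1}/m_j=\frac{\rho_{j+1}\delta_{j+1}}{\rho_j\delta_j}|w_0|^{r^j(r-1)^2}>1$ throughout that range (the worst ratio $\frac{\rho_{j+1}\delta_{j+1}}{\rho_j\delta_j}=1/r$ is beaten by $|w_0|^{(r-1)^2}>5>r$, and the other transitions have ratio $\ge1$). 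So the minimum is attained at $j=j^D_k$, where a single factor of $c_0/\rho$ off the Liouville bound finishes the job. You instead estimate each of the four blocks by hand, which buys you explicit lower bounds (e.g.\ $m_j\ge q_k^2$) but at the cost of the intricate bounded-block calculation: there you need $\inf H_{j^B_k}\ge q_k^{1/r^3}$ from the proof of Lemma~\ref{lem:indices}(ii) and the relation $q_k>q_{\tilde k}^N$, pieces of machinery the paper sidesteps entirely via monotonicity.

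Your bounded-block argument is correct in substance, but note one small misattribution: $q_k>q_{\tilde k}^N$ does \emph{not} follow from $k\in\ell_N$ via (\ref{def:ell_N}); it follows from $\tilde k\in\ell_N$ (which gives $q_{\tilde k+1}>q_{\tilde k}^N$) together with $q_k\ge q_{\tilde k+1}$. Also, the final ``$m_j\ge 2c_0\rho^{N'}q_{\tilde k}^2\gg2$'' deserves a word of justification since $c_0<1$, but it holds easily because $q_{\tilde k}\ge q_{k_0}>2^7\rho^{N'}$ from (\ref{def:k_0}). Once you see the paper's one-line monotonicity observation, though, your entire bounded-block and Diophantine-block computations become unnecessary.
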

\begin{proof}
From the fourth relation in (\ref{def:k_0}) we see that 
  $\delta_j\le\frac{8\rho^{N'}}{q_{k_0}} < \frac{1}{16}$.  
For $j^C_k\le j<j^D_k$ we have $\rho_j\delta_j=1/q_k$ and 
  since $|w_0|^{r^{j^C_k}}\ge q_k^{M'}$, by the definition of $j^C_k$, 
  we have $|w_0|^{r^j(r-1)}\ge q_k^{M'/M}\ge q_k^{3M}$, from which 
  it easily get $$m_j\ge\frac{2\rho}{c_0}$$ and, in particular, $m_j\ge2$.  
For $j^D_k\le j < j^C_{k'}$ the expression $|w_0|^{r^j(r-1)}$ increases 
  faster than $\rho_j\delta_j$ decreases, so it is enough to check 
  the case $j=j^D_k$, for which, by the above, we have 
  $m_j\ge\frac{c_0}{\rho}m_{j-1}\ge2$.  
\end{proof}

Next, we obtain the lower bound on the Hausdorff dimension of $F$.  
Recall the expression for the local Hausdorff dimensions $d_j$ given in (\ref{formula:d_j}).  
The next lemma shows it is close to $\tfrac12$ by the choices made in \S\ref{s:Init}.  

\begin{lemma}
$\liminf_{j\to\infty} d_j>\frac12-\eps.$  
\end{lemma}
\begin{proof}
By (\ref{eq:local}) it is enough to show that the term (\ref{eq:num}) and both 
  of the terms in (\ref{eq:den}) are bounded by $\delta$.  
By the choice of $M'$ in (\ref{def:M'}), it would be enough to show that each 
  term is bounded by $\frac{Mr}{M'}$ for all large enough $j$.  
It will be convenient to write $$A_j\lesssim B_j$$ as an abbreviation for 
  $\liminf A_j\le\liminf B_j$.  

We consider the expression (\ref{eq:den}) first.  Using (\ref{ieq:w_0>5}) and 
  the fact that $M'>2M^2$ we see that the first term in (\ref{eq:den}) satisfies 
  $$\frac{2r\log5}{(r-1)\log|w_0|}\le\frac{2r}{M}<\frac{Mr}{M'}.$$  
From the last row of the table, we see that for $j\neq j^C_{k-1}$ we have 
  $$\frac{\rho_j\delta_j}{\rho_{j+1}\delta_{j+1}} \in \left\{1,\frac{\rho}{c_0},r, 
      \frac{1}{2\rho^{N'+1} r^{j^B_{k'}-j^D_k-1}},  \right\}$$ 
  while for $j=j^C_{k-1}$ we have 
  $$\frac{\rho_j\delta_j}{\rho_{j+1}\delta_{j+1}} = \frac{2c_0\rho^{N'}q_k}{q_{\Tilde{k}}}.$$  
Then, in the second case, we have 
  $$\frac{\log(\rho_j\delta_j/\rho_{j+1}\delta_{j+1})}{r^j(r-1)\log|w_0|} \le 
    M\left(\frac{\log q_k+\log 2c_0\rho^{N'}}{r^{j^C_k-1}\log|w_0|}\right) \lesssim \frac{Mr}{M'}$$ 
  since $\inf H_{j^C_k}\in I^C_k$; 
  in the first case the left hand side above is $\lesssim0$.  

We now turn to the expression (\ref{eq:num}). 
For $j^C_k\le j<j^D_k$ we have  
  $$\frac{-\log(\rho_j\delta_j)}{r^j(r-1)\log|w_0|} 
     \le M\left(\frac{\log q_k}{r^{j^C_k}\log|w_0|}\right)
     \le \frac{M}{M'}.$$  
Next consider $j^D_k\le j<j^B_{k'}$.  Using $jr^{-j}\log r\le1$, 
  we have 
\begin{align*}
  \frac{-\log(\rho_j\delta_j)}{r^j(r-1)\log|w_0|} 
  &\le M\left(\frac{\log q_k+(j-j^D_i)\log r+\log(\rho/c_0)}{r^j\log|w_0|}\right) \\
  &\lesssim \frac{M(\log(q_k)+1)}{r^{j^D_k}\log|w_0|} 
   \lesssim \frac{Mr^5\log q_k}{\log q_{k+1}} < \frac{Mr^5}{N} = \frac{M}{M'}.  
\end{align*}
Finally, we turn to the possibility that $j_i^B\leq j< j_i^C$ ($i\ge1$).  
Since $j^C_k-j^B_k\le \log_r(M')+4$, we have 
\begin{align*}
  \frac{-\log(\rho_j\delta_j)}{r^j(r-1)\log|w_0|} 
  &\le M\left(\frac{\log q_{\Tilde{k}}-\log(2c_0\rho^{N'})}{r^{j^B_i}\log|w_0|}\right) 
  \lesssim \frac{MM'r^4\log q_{\Tilde{k}}}{r^{j^C_k}\log|w_0|} \\
  &\le \frac{Mr^4\log q_{\Tilde{k}}}{\log q_k} < \frac{Mr^4}{N} < \frac{M}{M'} 
\end{align*}
  and the lemma follows.  
\end{proof}

The proof of Theorem~\ref{thm:dichotomy} will be complete with 
  the proof of the following lemma. 
\begin{lemma}\label{lem:cross}
If $\lambda$ satisfies (\ref{PM:conv}) then $F\subset\NE(P_\lambda)$.  
\end{lemma}
\begin{proof}
It suffices to check that $\sum\delta_j<\infty$ for in that case, 
  every sequence $\dots, w_j, v_j, w_{j+1},\dots$ constructed 
  above satisfies (\ref{ieq:sumx}) and $F\subset\NE(P_\lambda)$, 
  by Theorem~\ref{thm:sumx}.  
We break the sum into three intervals: $j^B_k\le j\le j^C_k$, 
  $j^C_k<j<j_k^D$, and $j_k^D\leq j<j^B_{k'}$.  

Let $n_k=\log_{q_k}q_{k+1}$ so that $q_{k+1}=q_k^{n_k}$.  
It follows easily from the definitions that 
  $$j^D_k-j^C_k < \log_r n_k < \frac{\log\log q_{k+1}}{\log r}$$ 
  so that (\ref{PM:conv}) implies 
  $$\sum_{k\in\ell_N}\sum_{j^C_k<j<j^D_k}|w_j\times v_j| \le 
    \frac{4}{\log r}\sum_{k\in\ell_N}\frac{\log\log q_{k+1}}{q_k}<\infty.$$
Since $j_i^C-j_i^B\le\log_r(M')+4$ we have 
  $$\sum_{k\in\ell_N}\sum_{j^B_k\le j\le j^C_k}|w_j\times v_j| 
     \le \sum_{k\in\ell_N} \frac{8\rho^{N'}(\log_r(M')+5)}{q_{\Tilde{k}}}<\infty.$$  
Finally, 
  $$\sum_{k\in\ell_N}\sum_{j^D_k\le j< j^B_{k'}}|w_j\times v_j| 
      \le \sum_{k\in \ell_N}\frac{2R\rho^{N'}}{q_k}<\infty$$  
  where $R=\sum_{j\ge0}r^{-j}$.  
\end{proof}

\begin{proof}[Proof of Theorem~\ref{thm:divergent}]
The construction of the set $F$ as well as the lower bound $1/2$ estimate on its Hausdorff dimension remains valid for any irrational $\lambda$.  (Note that when $\sum_k \frac{\log \log q_{k+1}}{q_k}=\infty$,  $F$ cannot be a subset of $NE(P_\lambda)$ since the latter has Hausdorff dimension $0$).  On the other hand  the fact that $\lim_{j\to\infty}\delta_j=0$ implies $F\subset\DIV(P_\lambda)$, 
  by [Ch2,Prop.~3.6].  
Therefore, $\Hdim\DIV(P_\lambda)\ge\tfrac12$ for all irrational $\lambda$.  
The opposite inequality follows from a more general result in \cite{Ma2}.  
Lastly, when $\lambda\in\Q$, the set $\DIV(P_\lambda)$ is countable, 
  so that its Hausdorff dimension vanishes.  
\end{proof}

\end{document}